\documentclass[a4paper]{amsart}


\usepackage[latin1]{inputenc}
\usepackage[T1]{fontenc}
\usepackage[english]{babel}
\usepackage{amsmath}
\usepackage{verbatim}
\usepackage{amsfonts}
\usepackage[all]{xy}
\usepackage{amssymb}
\usepackage{MnSymbol}

\usepackage{hyperref}
\usepackage{mathrsfs}
\usepackage{color}
\usepackage{enumerate}


\newcommand{\incl}[1][r]{\ar@<-0.2pc>@{^(-}[#1] \ar@<+0.2pc>@{-}[#1]}
\newcommand{\xdashrightarrow}[2][]{\ext@arrow 0359\rightarrowfill@@{#1}{#2}}

\newcommand{\Ker}{\operatorname{Ker}}

\newcommand{\Spec}{\operatorname{Spec}}
\newcommand{\Pic}{\operatorname{Pic}}
\newcommand{\Div}{\operatorname{Div}}
\renewcommand{\div}{\operatorname{div}}
\newcommand{\Cl}{\operatorname{Cl}}
\newcommand{\Bl}{\operatorname{Bl}}
\newcommand{\Sing}{\operatorname{Sing}}
\newcommand{\Frac}{\operatorname{Frac}}
\newcommand{\Face}{\operatorname{Face}}
\newcommand{\Supp}{\operatorname{Supp}}
\renewcommand{\deg}{\operatorname{deg\ }}
\def\Vert{{\rm Vert}}
\def\Ray{{\rm Ray}}
\def\PL{{\rm PL}}

\newcommand{\tor}{\rm{tor}}
\newcommand{\SL}{\rm SL}
\newcommand{\PSL}{\rm PSL}

\newcommand{\PGL}{\rm PGL}
\newcommand{\Hom}{\rm Hom}
\newcommand{\ord}{\rm ord}
\newcommand{\Sch}{\rm Sch}

\def\PP{{\mathbb P}}
\def\BB{{\mathcal B}}
\def\ZZ{{\mathbb Z}}
\def\NN{{\mathbb N}}
\def\QQ{{\mathbb Q}}

\def\G{{\mathbb G}}
\def\D{{\mathfrak D}}
\def\F{{\mathcal F}}

\def\AA{{\mathbb A}}

\def\Cc{{\mathcal C}}

\def\ES{\Sigma}
\def\SC{{\rm Sch}(Z)}
\def\SG{{\rm Sch}_{G}(Z)}
\def\E{\mathscr{E}}
\def\AS{\mathbb{A}_{\star}}
\def\dc{dec}
\def\Prin{{\rm Prin}}


\theoremstyle{plain}
\newtheorem{theorem}{Theorem}[section]
\newtheorem{lemma}[theorem]{Lemma}
\newtheorem{proposition}[theorem]{Proposition}
\newtheorem{corollary}[theorem]{Corollary}
\newtheorem*{theorem*}{Theorem}

\theoremstyle{definition}
\newtheorem{definition}[theorem]{Definition}
\newtheorem{example}[theorem]{Example}

\newtheorem*{notation}{Notation}
\newtheorem*{Ack}{Acknowledgements}

\theoremstyle{remark}
\newtheorem{remark}[theorem]{Remark}

\address{Instituto de Ciencias Matematicas (ICMAT)\\ Campus Cantoblanco \\ Madrid, Spain}
\email{langlois.kevin18@gmail.com} 

\address{Fachbereich Physik, Mathematik und Informatik\\ Johannes Gutenberg -- Universit\"at Mainz \\ Mainz, Germany}
\email{rterpere@uni-mainz.de}

\begin{document}

\title[On the geometry of horospherical varieties of complexity one]{On the geometry of normal horospherical $G$-varieties of complexity one}

\author{Kevin Langlois}
\author{Ronan Terpereau}

\begin{abstract}
Let $G$ be a connected simply-connected reductive algebraic group. In this article, we consider the normal algebraic varieties equipped with a horospherical $G$-action such that the quotient of a $G$-stable open subset is a curve. Let $X$ be such a $G$-variety. Using the combinatorial description of Timashev, we describe the class group of $X$ by generators and relations and we give a representative of the canonical class. Moreover, we obtain a smoothness criterion for $X$ and a criterion to determine whether the singularities of $X$ are rational or log-terminal respectively. 
\end{abstract}

\maketitle
\thispagestyle{empty}


\tableofcontents

\section*{Introduction}

The varieties and the algebraic groups that we consider are defined over an algebraically closed field $k$ of characteristic zero. Let $G$ be a connected simply-connected reductive algebraic group and let $B \subset G$ be a Borel subgroup. The aim of this article is to describe certain geometric properties of a family of $G$-varieties: the normal horospherical $G$-varieties of complexity one. Among other results, we obtain explicit criteria to characterize the singularities of these varieties, we describe their class group by generators and relations, and we give an explicit representative of their canonical class.

Recall that the \emph{complexity} of a $G$-variety $X$ is the transcendence degree of the field extension $k(X)^{B}$ over $k$, where $k(X)^{B}$ denotes the field of $B$-invariant rational functions on $X$; see \cite{LV,Vi}. Since all the Borel subgroups of $G$ are conjugated, this notion does not depend on the choice of $B$. Geometrically, the complexity is the codimension of a general $B$-orbit. For instance, the $G$-varieties of complexity zero contain an open $B$-orbit. Those which are normal are called \emph{spherical varieties}; see \cite{Kn, Pez, Hur11, Ti3, Per} for more information.

The study of the $G$-varieties of complexity one is the next step (after the spherical case) towards the classification of normal $G$-varieties. 
Many important examples of $G$-varieties of complexity one motivate this study:
\begin{itemize}
 \item[$\bullet$] The normal $T$-varieties of complexity one, where $T$ is a torus; see \cite{KKMS, Ti2} for a combinatorial description, \cite{La} for a generalization over an arbitrary field, \cite{FZ} for the case of surfaces, and \cite{AH, AHS,AIPSV} for higher complexity.
\item[$\bullet$] The homogeneous spaces $G/H$ of complexity one, where $H$ is a connected reductive closed subgroup, are described in \cite{Pan92,AC}.
\item[$\bullet$] The normal three-dimensional affine $\SL_2$-varieties with a two-dimensional general orbit are studied in \cite{Arz97}.
\item[$\bullet$] The embeddings of ${\rm SL_{2}}/K$ into normal $\SL_2$-varieties, where $K$ is a finite subgroup, are studied in \cite{Pop}, \cite[\S 9]{LV}, \cite{Mos1}, and \cite{Mos2}. More generally, see \cite[\S 16.5]{Ti3} for a classification of the embeddings of $G/K$ where $G$ is a semisimple group of rank $1$ and $K \subset G$ is a finite subgroup. 
\item[$\bullet$] An example from classical geometry: Let $T \subset \SL_3$ be the subgroup of diagonal matrices. The homogeneous space ${\rm SL_3}/T$ can be identified with the set of ordered triangles on $\PP^2$. The embeddings of ${\rm SL_3}/T$ are studied in \cite{War} and \cite[16.5]{Ti3}.
\end{itemize}

A combinatorial description of normal $G$-varieties of complexity one is obtained in \cite{Ti}. This description is inspired by the Luna--Vust theory of embeddings of homogeneous spaces $G/H$ into normal varieties; see \cite{LV}. 

A $G$-action is called \emph{horospherical} if the isotropy group of any point contains a maximal unipotent subgroup of $G$. Therefore a homogeneous space $G/H$ is horospherical if and only if $H$ contains a maximal unipotent subgroup; such an $H$ is called a \emph{horospherical subgroup} of $G$. It follows from the Bruhat decomposition of $G$ that every horospherical $G$-homogeneous space is spherical, and thus a general $G$-orbit of a horospherical $G$-variety of complexity one has codimension one. We will recall in \S \ref{pasquier} how the set of horospherical subgroups of $G$ containing the unipotent radical of $B$ can be described from the set of simple roots of $G$.

The $G$-equivariant birational class of a horospherical $G$-variety $X$ is determined by the invariant field $k(X)^{G}$ and by the isotropy subgroup $H$ of a general point. Indeed, if $r$ denotes the complexity of the $G$-variety $X$, then by \cite[Satz 2.2]{Kn1} there exist an $r$-dimensional variety $C$ and a $G$-equivariant birational map  
$$\phi : X \dashrightarrow Z:=C\times G/H,$$ 
where $G$ acts on $Z=C\times G/H$ by translation on $G/H$. The map $\phi$ induces field isomorphisms $k(X)\simeq \Frac \left(k(C)\otimes_{k}k(G/H)\right)$ and $k(C)\simeq k(X)^{G}$. If $r=1$, then we can assume that $C$ is a smooth projective curve.

This article is structured as follows:
In the first part, we set up our framework by explaining the combinatorial description of normal horospherical $G$-varieties of complexity one (following Timashev). Let us be more precise. In \S \ref{pasquier} we describe the horospherical $G$-homogeneous spaces. In \S \ref{subsec12} we recall the definition of the scheme of geometric localities $\SG$ whose normal separated $G$-stable open subsets of finite type, also called $G$-models of $Z$, are the normal $G$-varieties $G$-birational to $Z$. We also introduce the notion of a chart, which is a $B$-stable affine open subset of $\SG$, and of a germ, which is a (proper) $G$-stable closed subvariety of $\SG$. Then we consider the $B$-stable divisors on $G/H$, also called colors (of $G/H$), and we introduce the set of $G$-valuations of $k(Z)$. Finally, we define the colored $\sigma$-polyhedral divisors in \S \ref{subsec13} and explain how to obtain any $G$-model of $Z$ from a finite collection of such polyhedral divisors.

The results of this paper are stated and proved in the second part. 
In \S \ref{subsec21} we explain how to obtain any simple $G$-model of $Z$ as the parabolic induction of an affine $L$-variety, where $L \subset G$ is a Levi subgroup. In particular, we obtain an effective construction of any simple $G$-model of $Z$. From this, we deduce several criteria to characterize the singularities of simple $G$-models of $Z$; see Theorem \ref{theorat} for a criterion for rationality of singularities and Theorems \ref{crit1} and \ref{crit2} for smoothness criteria. As mentioned at the end of \S \ref{subsec21}, our smoothness criteria are explicit thanks to the works of Pauer, Pasquier and Batyrev--Moreau. 
In \S \ref{subsec22} we prove the existence of the decoloration morphism for normal horospherical $G$-varieties of complexity one and give an explicit description of this morphism in terms of germs; see Proposition \ref{dec} for a precise statement. The decoloration morphism was introduced by Brion for spherical varieties in \cite[\S 3.3]{Bri91} and plays a key-role in our study of normal horospherical $G$-varieties of complexity one. Until the end of this introduction, we let $X$ be such a variety. In \S \ref{subsec23}, the heart of this paper, we parametrize the $G$-stable prime Weil divisors of $X$ and deduce from this a description of the class group of $X$ by generators and relations; see Theorem \ref{theodiv} and Corollary \ref{clgroup}. Then we obtain a criterion of factoriality for $X$; see Corollary \ref{cordiv}. Also, we relate the description of stable Cartier divisors obtained by Timashev in \cite{Tim00} to our description of stable Weil divisors; see Corollary \ref{corcartier}. In \S \ref{subsec24} we give an explicit representative of the canonical class of $X$; see Theorem \ref{theocan}. From this, we deduce criteria for $X$ to be $\QQ$-Gorenstein or log-terminal respectively; see Corollary \ref{Goren} and Theorem \ref{theolog}. Finally, Proposition \ref{res} provides an explicit resolution of singularities of $X$ which factors through the decoloration morphism defined in \S \ref{subsec22}.

\begin{Ack}
Both authors are grateful to Michel Brion, Ariyan Javanpeykar, Boris Pasquier, and especially to the referee for their reading and their valuable comments. The authors express their gratitude to the Institute of Mathematical Sciences (ICMAT) of Madrid for the hospitality it provided during the writing of this paper. The first-named author benefited from the support of the ERC Consolidator Grant NMST. He also thanks the Max Planck Institut für Mathematik Bonn for support. The second-named author benefited from the support of the DFG via the SFB/TR 45 ''Periods, Moduli Spaces and Arithmetic of Algebraic Varieties'' and from the support of the GDR ''Th\'eorie de Lie Alg\'ebrique et G\'eom\'etrique''.
\end{Ack}

\begin{notation} 
The base field $k$ is algebraically closed of characteristic zero. An integral separated scheme of finite type over $k$ is called a \emph{variety}. If $X$ is a variety, then $k[X]$ denotes the coordinate ring of $X$ and $k(X)$ denotes the field of rational functions of $X$. A point of $X$ is always assumed to be closed. We denote by $G$ a connected simply-connected reductive algebraic group (i.e., a direct product of a torus and a connected simply-connected semisimple group), by $B \subset G$ a Borel subgroup, by $U=R_u(B)$ the unipotent radical of $B$, and by $T \subset B$ a maximal (algebraic) torus. We denote by $\G_m$ the multiplicative group over $k$. A subgroup of $G$ is always a closed subgroup. If $H$ is such a subgroup, then 
$$N_{G}(H) = \{g\in G\ |\ gHg^{-1}\subset H\}$$
is the \emph{normalizer} of $H$ in $G$. For an algebraic group $K$, we denote by
$$ \chi(K) = \{ \text{algebraic group homomorphisms } \phi: K \to \G_m  \}$$
the character group of $K$. A variety on which $K$ acts (algebraically) is called a \emph{$K$-variety}. An algebra (over $k$) on which $K$ acts by algebra automorphisms is called a \emph{$K$-algebra}. If $X$ is a $K$-variety, then $k[X]$ and $k(X)$ are $K$-algebras; in particular, $k[X]$ and $k(X)$ are linear representations of $K$.
\end{notation}

\section{Preliminaries} \label{Sec1}

In this first part, we explain the combinatorial description of normal $G$-varieties of complexity one as given in \cite[\S 16]{Ti3} and specialized in the horospherical case. Let $C$ be a smooth projective curve, let $G/H$ be a horospherical $G$-homogeneous space, and let $Z=C \times G/H$; the group $G$ acts on $Z$ by translation on the second factor. The approach of Timashev consists in giving a classification of all the normal $G$-varieties which are $G$-birational to $Z$. 

Several times in this first part (and also later on) we mention the reference of a result (for instance in \cite{Ti3}) to justify an assertion or to introduce a notion but without recalling explicitly the result itself; the reason being that we did not wish to make this article too long and too technical. However, we tried to make it self-contained in the sense that whenever we believed that a notion or a result could be enlightening, then we recalled it explicitly.

\subsection{Horospherical homogeneous spaces} \label{pasquier}
In this section we enunciate a combinatorial description of the horospherical homogeneous spaces; see \cite[\S 2]{Pa} for details. 

Let $S$ be the set of simple roots of $G$ with respect to $(T,B)$. There exists a well-known one-to-one correspondence $I\mapsto P_{I}$ between the powerset of $S$ and the set of parabolic subgroups of $G$ containing $B$; see \cite[Th. 8.4.3]{Spr98}.
Let us assume that the closed subgroup $H \subset G$ contains the unipotent radical $U$ of $B$. Then $P=N_G(H)$ is a parabolic subgroup containing $B$. Therefore, there exists a unique subset $I \subset S$ such that $P=P_I$. The quotient algebraic group $K:=P/H$ is a torus and $M=\chi(K)$ identifies naturally with a sublattice of $\chi(T)$.

The next statement (\cite[Prop. 2.4]{Pa}) explains how the pair $(M,I)$ completely describes the horospherical homogeneous space $G/H$.

\begin{proposition} \label{des_ho}
The map $G/H\mapsto (M,I)$ is a bijection between
\begin{itemize}
\item[$\bullet$] the set of closed subgroups of $G$ containing $U$; and
\item[$\bullet$] the set of pairs $(M,I)$, where $M$ is a sublattice of $\chi(T)$ and $I$ is a subset of $S$ such that for every $\alpha \in I$ and every $m \in M$, we have $\langle m, \hat{\alpha}\rangle  = 0$ (here $\hat{\alpha}$ denotes the coroot of $\alpha$). 
\end{itemize}
\end{proposition}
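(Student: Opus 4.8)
The statement asserts a bijection between closed subgroups $H \subset G$ containing $U$ and pairs $(M,I)$ with $M \subset \chi(T)$ a sublattice, $I \subset S$ a subset, subject to the orthogonality condition $\langle m,\hat\alpha\rangle = 0$ for all $\alpha \in I$, $m \in M$. The plan is to make the assignment $H \mapsto (M,I)$ explicit, to construct an inverse, and to check the two compositions are the identity. The assignment has already been half-described in the preamble: given $H \supset U$, the normalizer $P = N_G(H)$ is a parabolic subgroup containing $B$ (this should be justified: $P \supset B$ since $B$ normalizes $U$ and $B/T$-conjugation together with $T \subset N_G(H)$ needs care — actually one argues $P \supset B$ because $H \supset U$ forces $N_G(H)$ to contain a Borel, hence $P$ is parabolic), so $P = P_I$ for a unique $I \subset S$; and $K := P/H$ is diagonalizable, in fact a torus since $P/H$ is connected (as $P$ is connected and the quotient of a connected group is connected) and $H \supset U \supset R_u(P)$ makes $P/H$ a quotient of the torus $P/R_u(P)H$... one sets $M := \chi(K) \hookrightarrow \chi(P) \to \chi(T)$, the last map being the restriction isomorphism onto a sublattice (restriction $\chi(P) \to \chi(T)$ is injective with image the characters vanishing on the coroots $\hat\alpha$, $\alpha \in I$; this is the standard description of $\chi(P_I)$). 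This already shows the orthogonality condition holds, so the map lands in the claimed target set.

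\textbf{Construction of the inverse.} Given $(M,I)$ with the orthogonality property, set $P := P_I$. The condition $\langle m,\hat\alpha\rangle = 0$ for $\alpha \in I$ says exactly that $M$, viewed in $\chi(T)$, lies in the image of the restriction $\chi(P) \hookrightarrow \chi(T)$; so $M$ corresponds to a sublattice $M' \subset \chi(P)$. Now $\chi(P) = \chi(P/R_u(P))$ and $P/R_u(P)$ is reductive with the torus $T$ (image) inside; a sublattice of $\chi(P)$ defines (by taking the common kernel of the corresponding characters of $P$, equivalently via the Galois-type correspondence between subgroups of a diagonalizable group and quotients of its character lattice applied to the reductive group $P$ — more precisely: the subgroup $H := \bigcap_{m \in M'} \ker(m \colon P \to \G_m)$) a closed subgroup $H \subset P$ containing $R_u(P)$, hence containing $U$. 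One then checks $P = N_G(H)$ and $\chi(P/H) = M'$, recovering $(M,I)$. The key input here is the elementary fact that for a connected reductive group $P$, the assignment "sublattice of $\chi(P)$" $\leftrightarrow$ "closed subgroup of $P$ containing $(P,P)$" (equivalently containing $R_u(P)$ together with the derived group — careful: $H$ must contain $U = R_u(B)$, not $(P,P)$; but since $M$ consists of $B$-eigenvalue characters of a torus quotient, $\ker$ of these on $P$ does contain $U$) is an order-reversing bijection, plus the standard description of $N_G(H)$ for $H \supset U$.

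\textbf{Checking the compositions and the main obstacle.} Starting from $H$, forming $(M,I)$, and going back: one must verify $N_G(H) = P_I$ and that the reconstructed subgroup $\bigcap_{m \in M} \ker m$ equals $H$; this is where the real content lies. The inclusion $\bigcap \ker m \supset H$ is clear, and equality amounts to showing $H$ is \emph{determined} by the character lattice of $P/H$ inside $\chi(P)$ — true because $P/H$ is diagonalizable and a diagonalizable group is recovered from its character group. Conversely, starting from $(M,I)$ one checks $N_G(\bigcap_{m}\ker m) = P_I$: the inclusion $\supseteq$ is immediate and $\subseteq$ uses that an element normalizing $H$ must normalize the Borel $\overline{B}$ opposite-compatible structure forcing it into $P_I$ — this relies on the classification of subgroups between $B$ and $G$. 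I expect the main obstacle to be precisely this last identification $N_G(H) = P_I$ cleanly and the verification that $\chi(P) \to \chi(T)$ has image exactly $\{m : \langle m,\hat\alpha\rangle = 0 \ \forall \alpha \in I\}$; both are "well-known" (and the excerpt explicitly delegates details to \cite[\S 2]{Pa}), so the honest write-up is to cite \cite[Prop. 2.4]{Pa} for the statement and merely indicate the dictionary above, rather than reproving standard structure theory of parabolic subgroups.
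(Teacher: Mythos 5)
The paper offers no proof of this proposition: it is stated as a quotation of \cite[Prop. 2.4]{Pa} and justified only by that citation, which is exactly the course of action you recommend at the end of your write-up. Your sketch of the underlying dictionary (the restriction $\chi(P_I)\hookrightarrow\chi(T)$ having image the annihilator of the coroots $\hat{\alpha}$, $\alpha\in I$; the reconstruction $H=\bigcap_{m}\ker m$; the duality between sublattices of the character group and closed subgroups containing $(P,P)R_u(P)$) is the standard one, and the two points you flag as delicate --- that $N_G(H)$ really contains $B$ (equivalently that $T$ normalizes $H$) and that $N_G(H)=P_I$ rather than some larger parabolic --- are precisely the nontrivial content supplied by Pasquier's proof, so deferring them to \cite{Pa} is consistent with what the paper itself does.
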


\subsection{Models, colors, and valuations}  \label{subsec12}
From now on, $H$ is a closed subgroup of $G$ associated with a pair $(M,I)$ as in Proposition \ref{des_ho} and $P=P_I$ is the parabolic subgroup $N_G(H)$. Also, we recall that $Z=C \times G/H$.

\subsubsection{} \label{Models}
We first introduce the \emph{scheme of geometric localities} as in \cite[\S 12.2]{Ti3}. All varieties which are birational to $Z$ may be glued together into a scheme over $k$ that we denote by $\SC$. More precisely, the schematic points of $\SC$ are local rings corresponding to prime ideals of finitely generated subalgebras with quotient field $k(Z)$ and the spectra of those subalgebras define a base of the Zariski topology on $\SC$ (by identifying prime ideals with associated local rings). Furthermore, the abstract group $G$ acts on the set $\SC$ via its linear action on $k(Z)$. We denote by $\SG$
the maximal normal open subscheme on which the action of $G$ on $\SC$ is regular; see \cite[Prop. 12.2]{Ti3}. 
A \emph{$G$-model} of $Z$ is a $G$-stable dense open subset of $\SG$ which is separated and Noetherian (or equivalently separated and of finite type over $k$).

We now introduce the notions of chart and germ of a $G$-model $X$ of $Z$. 
A \emph{chart} (or \emph{affine chart} or \emph{$B$-chart}) of $X$ is an affine dense open subset of $X$ which is $B$-stable. 
A \emph{germ} (or a \emph{$G$-germ}) of $X$ is a non-empty $G$-stable irreducible closed subvariety $\Gamma \subsetneq X$. 
By \cite[Th. 1]{Sum74}, for every germ $\Gamma \subsetneq X$ there exists a chart $X_0\subset X$ such that $X_0\cap \Gamma \neq \emptyset$.
The $G$-model $X$ of $Z$ is called \emph{simple} if it has a chart intersecting all the germs. By \cite[\S 5, Lemma 2]{Tim00}, every simple $G$-model of $Z$ is quasi-projective. Moreover, $X$ is a finite union of simple $G$-models of $Z$.

\subsubsection{} \label{varrho}
We now introduce the notion of color. Let $K'$ be an algebraic group acting on a variety $X'$, then a \emph{$K'$-divisor} on $X'$ is an irreducible closed subvariety of $X'$ which is $K'$-stable and has codimension one. A \emph{color} of $G/H$ is a $B$-divisor on $G/H$.
Let us consider the natural map 
$$\pi: G/H\rightarrow G/P.$$ 
Then each color (of $G/H$) is of the form $D_{\alpha} = \pi^{-1}(E_{\alpha})$, where $E_{\alpha}$ is the Schubert variety of codimension one corresponding to the root $\alpha\in S\setminus I$. 

We represent colors as vectors of the lattice $N = {\Hom}_{\mathbb{Z}}(M,\mathbb{Z})$ as follows: For the natural action of $B$ on $k(G/H)$, the lattice $M=\chi(P/H)$ identifies with the lattice of $B$-weights of the $B$-algebra $k(G/H)$. For every (non-zero) $B$-eigenvector $f\in k(G/H)$ of weight $m\in M$, we put   
$$\langle m,\varrho(D_{\alpha})\rangle  = v_{D_{\alpha}}(f),$$
where $v_{D_{\alpha}}$ is the valuation associated with $D_{\alpha}$. The value $\varrho(D_{\alpha})$ does not depend on the choice of $f$ and coincides with the restriction of the coroot $\hat{\alpha}$ to the lattice $M$; see \cite[\S 2]{Pa}. 
Denoting by $\F_0$ the set of colors, we obtain a map $\varrho: \F_{0}\to N$. Let us note that $\varrho$ is not injective in general; for instance, if $H=P$ is a parabolic subgroup, then $N=\{0\}$ and thus $\varrho$ is constant.  

Let $X$ be a $G$-model of $Z$. A $B$-divisor of $X$ which is not $G$-stable is called a color of $X$. 
There is a one-to-one correspondence between the set of colors of $G/H$ and the set of colors of $X$ given as follows:
$X$ possesses a $G$-stable open subset of the form $C'\times G/H$, where $C'\subset C$ is a dense open subset. If $D$ is a color of $G/H$, then the closure of $C'\times D$ in $X$ is a color of $X$ and vice-versa. 
In the following, we will always denote the colors of $X$ and $G/H$ in the same way.

\subsubsection{}  \label{defE}
A \emph{$G$-valuation} of $k(Z)$ is a function $v:k(Z)^{\star} \to \QQ$ such that: 
\begin{itemize}
\item[$\bullet$] $v(a+b) \geq \min\{v(a), v(b)\}$, for all $a,b\in k(Z)^{\star}$ satisfying $a+b\in k(Z)^{\star}$;
\item[$\bullet$] $v$ is a group homomorphism from $(k(Z)^{\star},\times)$ to $(\QQ, +)$ whose image is a discrete subgroup of $(\QQ, +)$;
\item[$\bullet$] the subgroup $k^{\star}$ is contained in the kernel of $v$; and
\item[$\bullet$] $v(g\cdot a) = v(a)$ for every $g\in G$ and every $a\in k(Z)^{\star}$.
\end{itemize}
By \cite[Prop. 19.8]{Ti3}, every $G$-valuation of $k(Z)$ is proportional to a valuation $v_D$ for a $G$-divisor $D$ on a $G$-model of $Z$. Let us denote by $N_{\QQ} = \QQ\otimes_{\mathbb{Z}}N$ the $\QQ$-vector space associated with $N$. We follow \cite[Def. 16.1]{Ti3} and define the set $\E$ as the disjoint union of sets $\{z\}\times\E_z$, where $\E_z = N_{\QQ}\times\QQ_{\geq 0}$ and $z \in C$, modulo the equivalence relation $\sim$ defined by
\begin{equation} \label{mod}
(z,v,l)\sim (z', v', l') \text{ if and only if } z = z',\ v=v',\ l = l' \text{ or } v = v',\ l = l' = 0.
\end{equation}
Therefore, $\E$ is the disjoint union indexed by $C$ of copies of the upper half-space $N_{\QQ}\times\QQ_{\geq 0}\subset N_{\QQ}\times\QQ$ with boundaries $N_{\QQ}\times\{0\}$ identified as a common part.

There is a bijection between the set of $G$-valuations of $k(Z)$ and the set $\E$, which we now explain. Let $A_M$ denote the algebra generated by the $B$-eigenvectors of $k(Z)$. 
Since $M$ is a free abelian group, the exact sequence of abelian groups  
$$ 0 \rightarrow \left( k(Z)^B \right)^\star \rightarrow \left( k(Z)^{(B)} \right)^* \rightarrow M \rightarrow 0$$
splits. Let us fix once and for all a (non-canonical) splitting $M \to  \left( k(Z)^{(B)} \right)^*,\ m \mapsto \chi^m$. Then $A_M$ admits an $M$-grading given by
\begin{equation} \label{A_M}
A_M= \bigoplus_{m\in M}k(C)\chi^m.
\end{equation}
Let $u = [(z, v, l)] \in \E$. We define a valuation $w = w_{u}$ of $A_M$ as follows: 
$$w \left(\sum_{i\in I}f_{i}\chi^{m_{i}}\right) = \min_{i\in I} \left \{v(m_{i}) + l\cdot {\ord}_z(f_{i})\right \},$$
where $I$ is a finite set, the $m_i$ are pairwise distinct elements of $M$, and each $f_i$ belongs to $k(C)^{\star}$.  
By \cite[Cor. 19.13, Th. 20.3 and 21.10]{Ti3}, for every $u \in \E$, there exists a unique $G$-valuation of $k(Z)$ such that the restriction to $A_M$ is $w_u$. From now on, we will always identify $\E$ with the set of $G$-valuations of $k(Z)$ and $N_{\QQ}$ as a part of $\E$ via the (well-defined) map $v\mapsto [(\cdot,v,0)]$.

\subsection{Colored polyhedral divisors}  \label{subsec13}
Let $M_{\QQ} = \QQ\otimes_{\mathbb{Z}}M$, then the $\QQ$-vector spaces $M_\QQ$ and $N_\QQ$ are dual to each other; we denote the duality by   
$$M_{\QQ}\times N_{\QQ}\rightarrow \QQ, \ (m,v)\mapsto \langle m, v\rangle.$$
We recall that a strongly convex polyhedral cone in $N_\QQ$ is a cone generated by a finite number of vectors and which contains no line.

\subsubsection{}  \label{extraTim}

We now recall the notions of colored cone and colored hypercone since these notions will intervene several times in the following; see also \cite[\S\S 15.1, 16.3, and 16.4]{Ti3} for more details. 

\begin{definition} 
A \emph{colored cone} of $G/H$ is a pair $(\Cc,\F)$, where $\F \subset \F_0$ is a set of colors of $G/H$ such that $0 \notin \varrho(\F)$, and $\Cc \subset N_\QQ$ is a strongly convex polyhedral cone generated by $\varrho(\F)$ and finitely many other vectors.
\end{definition}

\begin{definition}
Let $C_0$ be a dense open subset of $C$. A \emph{hypercone} of $G/H$ is a union $\mathscr{C}=\bigcup_{z \in C_0} \{z\} \times \mathscr{C}_z$ of convex polyhedral cones $\mathscr{C}_z \subset N_\QQ \times \QQ_{\geq 0}$ such that
\begin{itemize}
\item for all but finitely many $z \in C_0$ we have $\mathscr{C}_z=\left(\mathscr{C} \cap N_\QQ \right)+\QQ_{\geq 0} \epsilon$ with $\epsilon=(0,\ldots,0,1) \in N_\QQ \times \QQ$; and
\item either (A) there exists $z \in C_0$ with $\mathscr{C}_z=\left(\mathscr{C} \cap N_\QQ \right)$, or \\
      \; \; \; (B) we have $\emptyset \neq \BB:=\sum_{z \in C_0} \BB_z \subset \left(\mathscr{C} \cap N_\QQ \right)$, where $\epsilon+\BB_z=\mathscr{C}_z \cap(\epsilon+N_\QQ)$. 
\end{itemize} 
We say that $\mathscr{C}$ is \emph{strongly convex} if all $\mathscr{C}_z$ are strongly convex and if $0 \notin \BB$.\\
A \emph{colored hypercone} of $G/H$ is a pair $(\mathscr{C},\F)$, where $\F \subset \F_0$ is a set of colors of $G/H$ such that $0 \notin \varrho(\F)$, and $\mathscr{C} \subset \E$ is a strongly convex hypercone whose each $\mathscr{C}_z$ is generated by $\varrho(\F) \times \{0\}$ and finitely many other vectors. \\
A \emph{hyperface} of a colored hypercone $(\mathscr{C},\F)$ is a colored hypercone $(\mathscr{C}',\F')$, where $\mathscr{C}'=\bigcup_{z \in C_0} \mathscr{C}'_z$ is a union of faces of $\mathscr{C}_z$, and $\F'=\F \cap \varrho^{-1}(\mathscr{C}')$.
\end{definition}

Also, we recall that there is a correspondence between the colored cones of a horospherical homogeneous space and its simple equivariant embeddings; see \cite[Th. 3.1]{Kn} for details.

\subsubsection{}   \label{kevin}
In this subsection we introduce the notion of colored polyhedral divisor. This notion is equivalent to the one of colored hypercone in $\E$ defined above, but is more suitable for our purposes. 

\begin{definition} \label{defpdiv1}
Let $\sigma\subset N_{\QQ}$ be a strongly convex polyhedral cone.  A \emph{$\sigma$-polyhedron} is a subset of $N_\QQ$ obtained as a Minkowski sum $Q + \sigma$, where $Q \subset N_{\QQ}$ is the convex hull of a non-empty finite subset. Let $C_0$ be a dense open subset of the curve $C$, let 
$$\D = \sum_{z\in C_0} \Delta_z \cdot [z]$$
be a formal sum over the points of $C_0$, where each $\Delta_z$ is a $\sigma$-polyhedron of $N_\QQ$ and $\Delta_z = \sigma$ for all but a finite number of $z \in C_0$, and let $\F \subset \F_0$ be a set of colors of $G/H$ such that 
\begin{itemize}
\item[$\bullet$] $0$ does not belong to $\varrho(\F)$; and
\item[$\bullet$] $\varrho(\F)\subset \sigma$.
\end{itemize}
We call such a pair $(\D,\F)$ a \emph{colored $\sigma$-polyhedral divisor} on $C_0$. 
If $\sigma$ and $\F$ are clear from the context, then we write $\D$ instead of $(\D,\F)$ and call $\D$ a colored polyhedral divisor on $C_0$.
We say that $\D$ is \emph{trivial} (on $C_0$) if $\Delta_z = \sigma$ for every $z \in C_0$. 
\end{definition}

It is crucial to keep in mind that a colored $\sigma$-polyhedral divisor is defined on a dense open subset $C_0 \subset C$ and not on $C$ itself.
In the following, $\sigma\subset N_{\QQ}$ always denotes a strongly convex polyhedral cone and $\F$ a set of colors satisfying the conditions of Definition \ref{defpdiv1}; in particular, $(\sigma,\F)$ is a colored cone of $G/H$. 

Let  
$$\sigma^{\vee} = \left\{ m\in M_{\QQ}\ \mid \ \forall v\in\sigma, \langle m, v\rangle \geq 0 \right\}$$
denote the dual polyhedral cone of $\sigma$, and let $\D$ be a colored $\sigma$-polyhedral divisor on $C_0 \subset C$. To each $m\in\sigma^{\vee}$, we associate a $\QQ$-divisor on $C_0$: 
\begin{equation} \label{D(m)}
\D(m) = \sum_{z\in C_{0}}\min_{v\in\Delta_z(0)}\langle m, v\rangle \cdot [z],
\end{equation}
where for every $z \in C_0$ we denote by $\Delta_z(0)$ the set of vertices of $\Delta_z$.

To a given $\D$ on $C_0$ we also associate the following $M$-graded normal $k$-algebra (see \cite[\S 3]{AH} for details): 
\begin{equation} \label{ACD}
A[C_0,\D]:=\bigoplus_{m\in\sigma^{\vee}\cap M}A_{m}\chi^{m},
\end{equation}
where
$$A_m = H^0\left( C_0,\mathcal{O}_{C_0}(\D(m))\right) :=  H^0\left(C_0,\mathcal{O}_{C_0}(\lfloor \D(m)\rfloor)\right)$$
and $\lfloor \D(m)\rfloor$ is the Weil divisor (with integer coefficients) on $C_0$ obtained by taking the integer part of each coefficient of $\D(m)$. 
The multiplication on $A[C_0,\D]$ is constructed from the maps 
$$\tau_{m,m'}: \ A_m\times A_{m'}\rightarrow A_{m+m'},\ (f_{1},f_{2})\mapsto f_{1}\cdot f_{2}.$$
Let us note that each map $\tau_{m,m'}$ is well-defined since for all $m,m'\in\sigma^{\vee}$:
$$\D(m) + \D(m')\leq \D(m+m').$$

We now introduce the localization of a colored $\sigma$-polyhedral divisor. We will use this notion in the proof of Theorem \ref{theodiv}.

\begin{definition} \label{localization}
With the notation above, let $w\in\sigma^{\vee}\cap M$ such that $A_{w}\neq \{0\}$ and $f\in A_w \setminus \{0\}$, and let $\F^w$ be the set of colors defined by the relation 
$$\F^w = \{D\in \F \ |\ \varrho(D)\in w^{\perp}\}.$$ 
The \emph{localization} of the colored $\sigma$-polyhedral divisor $(\D,\F)$ with respect to $f\chi^w$ is the colored ($\sigma\cap w^{\perp}$)-polyhedral divisor $(\D_f^w,\F^w)$ defined by 
$$\D_{f}^w = \sum_{z\in (C_0)_f^w}{\Face}(\Delta_z,w)\cdot [z],$$
where
$$(C_0)^w_f = C_0\backslash Z(f) \text{ with } Z(f) = {\Supp}({\div} f + \D(w)), \text{ and }$$
$${\Face}(\Delta_z, w) := \left\{v\in \Delta_z\ \middle| \ \langle w, v\rangle \leq \min_{v'\in\Delta_z}\langle w,v'\rangle \right\}.$$
\end{definition}

To ensure that the algebra $A[C_0,\D]$ is finitely generated over $k$ and has $\Frac A_M$ as field of fractions (we recall that $A_M$ denotes the algebra generated by the $B$-eigenvectors of $k(Z)$), we now introduce the notion of properness for colored polyhedral divisors following \cite[\S 2]{AH}.

\begin{definition} \label{properness}
Let $\D$ be a colored $\sigma$-polyhedral divisor on $C_0$. Then $\D$ is called \emph{proper} if either $C_0$ is affine or 
$C_0 = C$ is projective and satisfies the following conditions:
\begin{itemize}
\item[$\bullet$] $\deg \D := \sum_{z\in C}\Delta_z\subsetneq \sigma.$
\item[$\bullet$] If $\min_{v\in {\deg} \D}\langle m, v\rangle = 0$, then $r\D(m)$ is a principal divisor for some $r\in\mathbb{Z}_{>0}$.
\end{itemize}
\end{definition}

Let us note that, if $\D$ is a proper colored polyhedral divisor on $C_0$, then by \cite[Prop. 3.3]{AHS} we have the relation 
$$A[C_0,\D]_{f\chi^w} = A[(C_0)_f^w,\D_f^w].$$

The next remark makes the link between the notions introduced in this subsection and the notions introduced in \S \ref{extraTim}. 

\begin{remark}  \label{link}
Let $(\D,\F)$ be a colored $\sigma$-polyhedral divisor on a dense open subset $C_0\subset C$.
Let $\mathscr{C}(\D)$ be the subset of $\E$ defined as the disjoint union $\sqcup_{z \in C_0} \{z\}\times \mathscr{C}(\D)_z$ modulo the equivalence relation $\sim$ defined by \eqref{mod}, where $\mathscr{C}(\D)_z$ is the cone generated by $\sigma\times \{0\}$ and $\Delta_z\times\{1\}$. Then the pair $(\mathscr{C}(\D),\F)$ is the colored hypercone of $G/H$ associated with $(\D,\F)$. One may check that this gives a one-to-one correspondence between the set of colored polyhedral divisors defined on a dense open subset $C_0 \subset C$ and the set of colored hypercones of $\E$. 
Moreover, through this correspondence, the properness of colored polyhedral divisors corresponds to the \emph{admissibility} of colored hypercones; see \cite[Def. 16.12]{Ti3}.  
\end{remark}

\subsubsection{}  \label{simplemodels}
In this subsection, we explain how to construct a simple $G$-model of $Z$ (introduced in \S \ref{Models}) starting from a proper colored polyhedral divisor; see \cite[\S 13]{Ti3} for details.

Let $(\D,\F)$ be a proper colored $\sigma$-polyhedral divisor on a dense open subset $C_0\subset C$. 
Let us denote by $\mathscr{C}(\D)(1)$ the set of elements $[(z,v,l)]\in\mathscr{C}(\D)$ such that $(v,l)$ is the primitive vector of an extremal ray of $\mathscr{C}(\D)_z$, and let $Bx_0$ be the open $B$-orbit of $G/H$. Let us consider the subalgebra $A \subset k(Z)$ defined by:
$$A = (k(C)\otimes_{k}k[Bx_0])\cap\bigcap_{D\in \F}\mathcal{O}_{v_{D}}\cap\bigcap_{v\in\mathscr{C}(\D)(1)}\mathcal{O}_{v},$$
where for a discrete $G$-valuation $v$ of $k(Z)$, 
$$\mathcal{O}_{v} = \{f\in k(Z)^{\star}\ |\ v(f)\geq 0\}\cup\{0\}$$
is the corresponding local ring. By \cite[Th. 13.8, \S 16.4]{Ti3}, $A$ is a normal algebra of finite type over $k$. Moreover, by \cite[Cor. 13.9, \S 16.4]{Ti3}, the affine $B$-variety 
$$X_0(\D) := \Spec A$$ 
is an open subset of  $\SG$. Also, by \cite[Th. 12.6]{Ti3}, the subscheme 
$$X(\D):= G\cdot X_0(\D)$$ 
is a $G$-model of $Z$ and $X_0(\D)$ is a chart of $X(\D)$. In particular, $X_0(\D)$ is a dense open subset of $X(\D)$. 
Conversely, for every simple $G$-model $X$ of $Z$ there exists a colored polyhedral divisor $\D$ such that $X=X(\D)$; see \cite[Th. 16.19]{Ti3}.

\begin{lemma} \label{lemme-U-invariant}
Let $(\D,\F)$ be a proper colored $\sigma$-polyhedral divisor on a dense open subset $C_0\subset C$, and let $X_0(\D)\subset X(\D)$ be the corresponding chart. 
Then $k[X_0(\D)]^U$ identifies with $A[C_0,\D]$ as $K$-algebras, where $A[C_0,\D]$ is the algebra defined by \eqref{ACD} and $K$ is the torus introduced in \S \ref{pasquier}. 
\end{lemma}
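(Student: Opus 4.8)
The plan is to identify $k[X_0(\D)]^U$ by a weight-space computation using the $M$-grading on $k(Z)^{(B)}$ and the explicit description of $A = k[X_0(\D)]$ as an intersection of local rings from \S\ref{simplemodels}. First I would recall that, since $U$ is a maximal unipotent subgroup and $G$ is reductive with Borel $B = T \ltimes U$, the subalgebra of $U$-invariants $A^U$ is stable under $T$ (because $T$ normalizes $U$), hence $M$-graded; moreover $A^U$ is spanned by the $B$-eigenvectors lying in $A$. Concretely, $A^U = \bigoplus_{m} (A \cap k(Z)^{(B)}_m)$, where $k(Z)^{(B)}_m = k(C)\chi^m$ via the fixed splitting in \eqref{A_M}. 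So the task reduces to: for each $m \in M$, determine which functions $f\chi^m$ with $f \in k(C)$ lie in $A$, and to check that the resulting $M$-graded algebra coincides with $A[C_0,\D]$, with matching $K = P/H$-actions.

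Next I would run the valuative criterion defining $A$ in \S\ref{simplemodels} on a homogeneous element $f\chi^m$. Membership in $k(C)\otimes_k k[Bx_0]$ for $f \in k(C)$ and $m \in M = \chi(P/H)$ amounts to $m \in \sigma^\vee$: indeed $k[Bx_0]^{(B)}$ has weight monoid $\sigma^\vee \cap M$ since the colors $D_\alpha$ with $\varrho(D_\alpha) \in \sigma$ impose exactly the conditions $\langle m,\varrho(D)\rangle \ge 0$, and the extremal rays of $\mathscr{C}(\D)_z$ meeting $N_\QQ \times \{0\}$ contribute the remaining generators of $\sigma$; together these give $\langle m, v\rangle \ge 0$ for all $v \in \sigma$. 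For the vertical valuations, an extremal ray of $\mathscr{C}(\D)_z$ of the form $(v,l)$ with $l > 0$ is, by the construction in Remark \ref{link} ($\mathscr{C}(\D)_z$ generated by $\sigma\times\{0\}$ and $\Delta_z\times\{1\}$), proportional to $(v,1)$ with $v$ a vertex of $\Delta_z$; the associated $G$-valuation $w_{[(z,v,1)]}$ evaluates $f\chi^m$ to $\langle m,v\rangle + \ord_z(f)$. Requiring this to be $\ge 0$ for every vertex $v$ of $\Delta_z$ and every $z \in C_0$ is exactly the condition $\div(f) + \D(m) \ge 0$ on $C_0$, i.e. $f \in H^0(C_0,\mathcal{O}_{C_0}(\lfloor \D(m)\rfloor)) = A_m$, using \eqref{D(m)} and the definition of $A_m$ below \eqref{ACD}. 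Hence $A_m\chi^m = A \cap k(C)\chi^m$ for $m \in \sigma^\vee \cap M$ and this intersection is zero otherwise, so $A^U = \bigoplus_{m\in\sigma^\vee\cap M} A_m\chi^m = A[C_0,\D]$ as $M$-graded algebras, and the identification is compatible with multiplication since both sides use $\tau_{m,m'}$.

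Finally I would check the $K$-equivariance. The torus $K = P/H$ acts on $G/H$ on the right and this action commutes with the left $G$-action; it preserves $Bx_0$ and acts on $k[Bx_0]^{(B)} \subset k(Z)^{(B)}$, hence on each weight space, by the character of $K$ corresponding to $m$ under the identification $M = \chi(P/H)$. Since the $U$-invariants are exactly the $B$-eigenvectors in $A$, the $K$-action restricts to $A^U = A[C_0,\D]$ and acts on the summand $A_m\chi^m$ through the character $m \in M = \chi(K)$, which is precisely the $K$-module structure on $A[C_0,\D]$ coming from its $M$-grading. This gives the isomorphism of $K$-algebras.

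I expect the main obstacle to be the precise bookkeeping in the second paragraph: namely verifying that the ``horizontal'' constraints (from the colors in $\F$ together with the extremal rays of $\mathscr{C}(\D)_z$ lying in $N_\QQ\times\{0\}$) cut out exactly $\sigma^\vee \cap M$ as the weight monoid of $A^U$ — in particular that no extra conditions are imposed and that the colors not in $\F$ play no role here because $A^U$ only sees $B$-eigenvectors of $k[Bx_0]$ — and that the ``vertical'' constraints assemble correctly into the sheaf-cohomology condition defining $A_m$, including the handling of the floor functions $\lfloor\,\rfloor$ and the fact that for cofinitely many $z$ one has $\Delta_z = \sigma$ so that no condition beyond $m \in \sigma^\vee$ is added at those points. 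The rest is a direct translation between Timashev's valuative description and the Altmann--Hausen algebra $A[C_0,\D]$.
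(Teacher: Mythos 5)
Your proposal is correct and follows essentially the same route as the paper's proof: restrict the valuative description of $A=k[X_0(\D)]$ from \S\ref{simplemodels} to the graded pieces $f\chi^m$ of $A_M$ and check that the resulting conditions are exactly $m\in\sigma^\vee$ and $\div(f)+\D(m)\geq 0$. One inaccuracy in your second paragraph: $k[Bx_0]^{(B)}$ has weight group all of $M$ (the open $B$-orbit is an affine space times the torus $K$), so membership in $k(C)\otimes_k k[Bx_0]$ imposes no condition on $m$; the constraint $m\in\sigma^\vee$ comes entirely from the valuations $v_D$ with $D\in\F$ together with the horizontal extremal rays of $\mathscr{C}(\D)$ — which you do correctly invoke in the same sentence — so this is a misattribution rather than a gap and the argument still goes through.
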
  

\begin{proof}
The subalgebra
$$k[X_0(\D)]^U \subset (k(C) \otimes_k k[Bx_0])^U=A_M \text{ (defined in \S \ref{defE})}$$
is generated by elements of the form $f \chi^m$, where $f \in k(C)^\star$ and $m \in M$, satisfying $v_D(f \chi^m) \geq 0$ and $v(f \chi^m) \geq 0$ for all $D \in \F$ and $v \in \mathscr{C}(\D)(1)$.  
One may check that these conditions are equivalent to $\div(f)+ \D(m) \geq 0$, which proves the lemma. 
\end{proof}  

\begin{remark}
By \cite[Th. 3.1]{Ti}, and with the notation of Definition \ref{localization}, we have the relation 
$$X(\D^w_f) = G\cdot (X_0(\D))_{f\chi^w},$$ 
where $(X_0(\D))_{f\chi^w}$ is the distinguished Zariski open subset of $f\chi^w \in k[X_0(\D)]$.  
\end{remark}

\subsubsection{}  \label{color_datum}
Let $X$ be the simple $G$-model of $Z$ associated with the proper colored $\sigma$-polyhedral divisor $(\D,\F)$ as explained in \S \ref{simplemodels}. 
By \cite[Th. 16.19 (2)]{Ti3}, there is a combinatorial description of the set of germs of $X$ (defined in \S \ref{Models}). 
Each germ $\Gamma$ is described by a pair $(\Cc_1, \F_1)$, called the \emph{colored datum} of $\Gamma$, where $\Cc_1$ is a hyperface of the hypercone $\mathscr{C}(\D)$ (see Remark \ref{link}) and $\F_1:= \{ D_\alpha \in \F \ |\ \varrho(D_\alpha) \in \Cc_1 \}.$ Geometrically, $\F_1$ is the subset of colors of $\F$ that contain the germ $\Gamma$.

\subsubsection{}
In this subsection we consider the general description of the non-necessarily simple $G$-models of $Z$.

A finite collection $\ES = \{(\D^i,\F^i)\}_{i\in J}$ of proper colored polyhedral divisors defined on dense open subsets of $C$ is a \emph{colored divisorial fan} if for all $i,j \in J$ there exists $l \in J$ such that $\mathscr{C}(\D^l)=\mathscr{C}(\D^i) \cap \mathscr{C}(\D^j)$ and $(\mathscr{C}(\D^l),\F^l)$ is a common hyperface of the colored hypercones $(\Cc(\D^i),\F^i)$ and $(\Cc(\D^j),\F^j)$.  

Let us denote 
$$|\ES| = \bigcup_{i\in J}\mathscr{C}(\D^i)\subset\E$$
the \emph{support} of the colored divisorial fan $\ES$. 

The next theorem gives a description of the $G$-models of $Z$; see \cite[Th. 16.19 (3)]{Ti3} and \cite[Cor. 12.14]{Ti3}.

\begin{theorem}
If $\ES = \{(\D^i,\F^i)\}_{i\in J}$ is a colored divisorial fan on $C$, then the union  
$X(\ES) := \bigcup_{i\in J}X(\D^i)$ in the scheme ${\rm Sch}_{G}(Z)$ (defined in \S \ref{Models}) is a $G$-model of $Z$, and every $G$-model of $Z$ is obtained in this way. 
Moreover, the $G$-model $X(\ES)$ of $Z$ is a complete variety if and only if $|\ES| = \E$, where $\E$ is the set of $G$-valuations (defined in \S \ref{defE}). 
\end{theorem}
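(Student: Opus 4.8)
The plan is to deduce the statement from Timashev's classification by translating it through the dictionary of Remark \ref{link}. First I would record the (already noted) bijection between colored $\sigma$-polyhedral divisors $(\D,\F)$ defined on dense open subsets $C_0\subset C$ and colored hypercones $(\mathscr{C},\F)$ of $\E$, under which properness of $(\D,\F)$ corresponds to the admissibility of $(\mathscr{C},\F)$ in the sense of \cite[Def.~16.12]{Ti3}. Through this bijection, $X(\D^i)$ is (by \S\ref{simplemodels}) exactly the simple $G$-model of $Z$ attached to the admissible colored hypercone $\mathscr{C}(\D^i)$, the intersection $\mathscr{C}(\D^i)\cap\mathscr{C}(\D^j)$ is again a hypercone, and the requirement that $(\mathscr{C}(\D^l),\F^l)$ be a common hyperface of $(\mathscr{C}(\D^i),\F^i)$ and $(\mathscr{C}(\D^j),\F^j)$ is precisely Timashev's compatibility condition. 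Hence a colored divisorial fan on $C$ is the same datum as the combinatorial object (a ``colored hyperfan'') that Timashev attaches to general $G$-models in \cite[\S 16.4]{Ti3}.

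Granting this, the first assertion is immediate: by \cite[Th.~16.19 (3)]{Ti3} together with \cite[Cor.~12.14]{Ti3}, gluing the simple $G$-models $X(\D^i)$ along the open subsets corresponding to the common hyperfaces $\mathscr{C}(\D^l)$ produces a separated $G$-stable open subscheme of $\SG$ of finite type over $k$, that is, a $G$-model of $Z$, and this is exactly $X(\ES)=\bigcup_{i\in J}X(\D^i)$ (the regularity of the $G$-action being automatic since it is open in $\SG$). Conversely, given an arbitrary $G$-model $X$ of $Z$, we know from \S\ref{Models} that $X$ is a finite union of simple $G$-models $X_i$; each $X_i$ equals $X(\D^i)$ for some proper colored polyhedral divisor $(\D^i,\F^i)$ by \cite[Th.~16.19]{Ti3}, and the way the $X_i$ sit inside the separated scheme $X$ forces, again by \cite[Th.~16.19 (3)]{Ti3}, the hypercones $\mathscr{C}(\D^i)$ to meet along common hyperfaces; thus $\ES:=\{(\D^i,\F^i)\}_{i\in J}$ is a colored divisorial fan and $X=X(\ES)$.

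For the completeness criterion I would argue as follows. Since the $G$-models are varieties over $k$, being complete means being proper over $\Spec k$, and by the valuative criterion it suffices to test centers of discrete valuations of $k(Z)$. Using the $G$-action and \cite[Prop.~19.8]{Ti3} (which relates $G$-valuations of $k(Z)$ to $G$-divisors on $G$-models), this reduces to checking that every $G$-valuation of $k(Z)$ has a nonempty center on $X(\ES)$. Now a $G$-valuation corresponding to a point $u\in\E$ has a center on the simple $G$-model $X(\D^i)$ if and only if $u$ lies in the hypercone $\mathscr{C}(\D^i)$: this is part of the combinatorial description of simple $G$-models in \cite[Th.~13.8, Th.~16.19]{Ti3}. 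Since the $X(\D^i)$ cover $X(\ES)$, every $G$-valuation of $k(Z)$ has a center on $X(\ES)$ if and only if $\E=\bigcup_{i\in J}\mathscr{C}(\D^i)=|\ES|$, which is the claim.

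I expect the main obstacle to lie in the two genuine reduction steps, which are theorems of Timashev rather than bookkeeping: on one hand, that the combinatorial compatibility condition (common hyperfaces) is equivalent to the scheme-theoretic gluing inside $\SG$ being separated and of finite type, which is the content of the separatedness analysis behind \cite[Th.~12.6, Cor.~12.14]{Ti3}; on the other hand, that in applying the valuative criterion one may restrict attention to $G$-valuations. Everything else is a translation of \cite[\S\S 12, 13, 16]{Ti3} into the polyhedral-divisor language fixed in \S\ref{subsec13}.
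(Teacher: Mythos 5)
Your proposal is correct and takes essentially the same route as the paper: the paper offers no proof of this theorem, merely citing \cite[Th.~16.19 (3)]{Ti3} and \cite[Cor.~12.14]{Ti3}, and your argument is exactly the unpacking of those citations through the dictionary of Remark \ref{link} (divisorial fans versus admissible colored hyperfans), together with the reduction of completeness to the existence of centers for $G$-valuations, which is likewise Timashev's. Nothing further is needed.
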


\section{Main results}  \label{mainresults}

\subsection{Parabolic induction and smoothness criteria}   \label{subsec21}
In this section we prove Lemma \ref{lemme-Levi} which allows us to construct explicitly any simple $G$-model of $Z$ as the parabolic induction of an affine $L$-variety, where $L\subset G$ is a Levi subgroup. From this we deduce a criterion (Theorem \ref{theorat}) to determine whether the singularities of a simple $G$-model of $Z$ are rational. Moreover, we obtain smoothness criteria (Theorems \ref{crit1} and \ref{crit2}) for simple $G$-models of $Z$. Throughout this section we fix a proper colored $\sigma$-polyhedral divisor $(\D,\F)$ on a dense open subset $C_0 \subset C$.

\subsubsection{}  \label{Levi}
This subsection is inspired by \cite[\S 28]{Ti3}. 
Let us now introduce the notation that we will need to describe a simple $G$-model as the parabolic induction of an affine $L$-variety. 
As before, $T$ denotes a maximal torus of $G$ contained in $B$.
We consider the following set of simple roots
$$I': = \{\alpha\in S\setminus I\ |\ D_{\alpha}\in\F\}\cup I$$ 
and we denote by $P_\F$ the parabolic subgroup $P_{I'}$ containing $B$.
We choose a Levi subgroup $L \subset P_\F$ containing $T$ and a Borel subgroup $B_L$ of $L$ containing $T$ such that $I'$ is the set of simple roots of $L$ (with respect to $(T,B_L)$). To summarize, we have the following inclusions:
$$ T \subset B_L \subset L \subset P_\F \subset G.$$
We denote by $Z_L$ the horospherical $L$-variety $C\times L/H_L$, where $H_L = L\cap H$. By \cite[Cor. 15.6]{Ti3}, the homogeneous space $L/H_L$ is quasi-affine. Moreover, denoting $P_L = N_L(H_L)$, the torus $P_L/H_L$ identifies with $K=P/H$, and $M = \chi(P/H) = \chi(P_L/H_L)$ through this identification. Let $\F_L$ be the set of colors of $L/H_L$, then one may check that the image of $\F_L$ by the map $\varrho$ in the lattice $N$ is the same as the one of $\F$. In particular, $\F_L$ satisfies the conditions of Definition \ref{defpdiv1}. We denote by $(\D_L,\F_L)$ the colored $\sigma$-polyhedral divisor on $C_0$ (relatively to $Z_L$) defined by the formal sum $\D_L:=\D = \sum_{z\in C_0} \Delta_z \cdot [z]$, and by $X(\D_L) \subset {\Sch}_L(Z_L)$ the associated $L$-variety; see \S \ref{simplemodels}.    

The quotient morphism  $P_{\F}\to P_{\F}/R_u(P_\F) \cong L$ makes $X(\D_L)$ a $P_{\F}$-variety. As the quotient morphism $G \to G/P_\F$ is locally trivial (for the Zariski topology), we can form the \emph{twisted product} $G \times^{P_\F} X(\D_L)$. The latter is defined as the quotient $(G \times X(\D_L))/P_\F$, where $P_\F$ acts as follows:
$$ p.(g,x):=(gp^{-1},p.x), \ \text{ where } p \in P_\F, \ g \in G, \text{ and }\ x \in X(\D_L).$$
We recall that the twisted product $G \times^{P_\F} X(\D_L)$ is a locally trivial fiber bundle over $G/P_\F$ with fiber $X(\D_L)$; see \cite[\S I.5]{Jan87} for more details on twisted products.

\subsubsection{}
The next result is an adaptation of \cite[\S 3]{PV72} and \cite[\S 5]{Pau81} to the case of complexity one.

\begin{lemma} \label{lemme-Levi}
With the notation of \S \ref{Levi}, the $L$-variety $X(\D_L)$ is affine and $X(\D)$ is $G$-isomorphic to the twisted product $G\times^{P_{\F}}X(\D_L)$. 
Moreover, the $L$-algebra $k[X(\D_L)]$ identifies with the $L$-subalgebra
$$\mathcal{A}[C_0, \D_L]:=\bigoplus_{m\in\sigma^{\vee}\cap M}H^0\left(C_0,\mathcal{O}_{C_0}(\D(m))\right)\otimes_{k} V(m)\subset k(C)\otimes_{k} k[L/H_L],$$
where $\D(m)$ is the $\QQ$-divisor on $C_0$ defined by \eqref{D(m)} and $V(m)$ is the simple $L$-submodule of $k[L/H_L]$ of highest weight $m$ with respect to $(T,B_L)$. 
\end{lemma}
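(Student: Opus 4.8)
The plan is to deduce the statement from three ingredients already assembled in the paper: Lemma \ref{lemme-U-invariant}, the general parabolic-induction formalism of \cite[\S 28]{Ti3}, and the classical description of functions on a quasi-affine horospherical homogeneous space. First I would show that $X(\D_L)$ is affine. By construction $X(\D_L)$ is a simple $L$-model of $Z_L = C \times L/H_L$, and by \cite[Cor. 15.6]{Ti3} the homogeneous space $L/H_L$ is quasi-affine; since $I'$ is chosen so that \emph{all} colors of $L/H_L$ lie in $\F_L$ (i.e.\ the colored cone $(\sigma,\F_L)$ ``uses up'' every color), the corresponding $L$-model has no color to contribute a non-affine direction. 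Concretely, I would argue that the local ring conditions defining the chart $X_0(\D_L)$ in \S \ref{simplemodels} already cut out a $G$-stable (here $L$-stable) affine subset: because $\varrho(\F_L)$ generates $\sigma$ together with the ray generators of $\mathscr{C}(\D_L)$, the open $B_L$-orbit together with the valuation constraints forces $X_0(\D_L) = X(\D_L)$, so $X(\D_L) = \Spec k[X(\D_L)]$ is affine. Equivalently one invokes \cite[\S 28]{Ti3}: a simple model is affine precisely when its colored cone contains all colors, which is exactly the defining property of $I'$ and $\F_L$.

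Next I would identify the algebra $k[X(\D_L)]$. Since $L/H_L$ is quasi-affine horospherical, the $L$-module $k[L/H_L]$ decomposes as $\bigoplus_{m \in M^+} V(m)$, where $M^+ \subset M$ is the set of dominant weights occurring and $V(m)$ is the simple $L$-module of highest weight $m$; moreover each $V(m)$ contains a one-dimensional $U_L$-fixed line spanned by a $B_L$-eigenvector of weight $m$, so $k[L/H_L]^{U_L} = \bigoplus_{m} k\chi^m$ recovers the semigroup $M \cap \sigma^\vee$ in the limit. Taking $U_L$-invariants in $k[X(\D_L)] \subset k(C) \otimes_k k[L/H_L]$ and applying Lemma \ref{lemme-U-invariant} (for the group $L$ in place of $G$) gives $k[X(\D_L)]^{U_L} = A[C_0,\D_L] = \bigoplus_{m \in \sigma^\vee \cap M} H^0(C_0, \Oo_{C_0}(\D(m)))\chi^m$. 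Because $L$ is reductive and $k[X(\D_L)]$ is a rational $L$-module, $k[X(\D_L)] = k[X(\D_L)]^{U_L} \otimes \text{(isotypic directions)}$ is reconstructed from its $U_L$-invariants: each weight-$m$ highest-weight vector $f\chi^m$ with $f \in H^0(C_0,\Oo_{C_0}(\D(m)))$ generates an $L$-submodule isomorphic to $V(m)$, and summing over $m$ yields exactly $\mathcal{A}[C_0,\D_L] = \bigoplus_{m} H^0(C_0,\Oo_{C_0}(\D(m))) \otimes_k V(m)$. The reverse inclusion $\mathcal{A}[C_0,\D_L] \subset k[X(\D_L)]$ follows since every element of $V(m)$ is an $L$-translate of the highest-weight vector, and $k[X(\D_L)]$ is $L$-stable and contains that vector times any admissible $f$; one also checks $\mathcal{A}[C_0,\D_L]$ is a subalgebra using $V(m)\cdot V(m') \supset V(m+m')$ and $H^0(\D(m))\cdot H^0(\D(m')) \subset H^0(\D(m+m'))$ (the latter from $\D(m)+\D(m') \le \D(m+m')$, noted after \eqref{ACD}).

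Finally I would establish the $G$-isomorphism $X(\D) \cong G \times^{P_\F} X(\D_L)$. The twisted product $G \times^{P_\F} X(\D_L)$ is a normal $G$-variety (it is a Zariski-locally trivial bundle over $G/P_\F$ with normal affine fiber), and it is $G$-birational to $Z$: its open subset over the big cell of $G/P_\F$ is $R_u(P_\F^-) \times X(\D_L)$, whose function field is $\Frac(k(C) \otimes_k k(L/H_L) \otimes_k k[R_u(P_\F^-)]) \cong k(C\times G/H) = k(Z)$, using that $G/H \cong G\times^{P}(P/H)$ and $P = P_\F \cap (\text{stuff})$ — more precisely that the horospherical structure of $G/H$ is induced from that of $L/H_L$ exactly as in \cite[\S 28]{Ti3}. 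Then I would match charts: the $B$-chart of $G\times^{P_\F}X(\D_L)$ lying over the big cell is $U \cdot X_0(\D_L)$ (with $U = R_u(B)$), and its ring of $U$-invariants is again $A[C_0,\D_L]$ by Lemma \ref{lemme-U-invariant}; since a simple $G$-model of $Z$ is determined by the $B$-chart, hence by its colored polyhedral divisor, and both $G\times^{P_\F}X(\D_L)$ and $X(\D)$ give rise to the same $(\D,\F)$, they are $G$-isomorphic by \cite[Th. 16.19]{Ti3}.

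The main obstacle I anticipate is the bookkeeping in this last step: verifying carefully that the twisted product is \emph{separated of finite type} and really lies in $\SG$, and that the colored polyhedral divisor one extracts from it is literally $(\D,\F)$ and not merely one with the same underlying cone — in particular tracking how the colors $D_\alpha$ for $\alpha \in I' \setminus I$ (which are ``swallowed'' into the parabolic $P_\F$) interact with the colors of $L/H_L$. Making the identification $\varrho(\F_L) = \varrho(\F)$ fully precise, and checking that the induced $G$-action on the twisted product is the regular one required for membership in $\SG$, is where the argument needs the most care; everything else is a routine application of the representation theory of reductive groups together with the results already quoted from \cite{Ti3} and \cite{AH}.
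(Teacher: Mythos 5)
Your proposal is correct and follows essentially the same route as the paper: affineness because $\F_L$ exhausts the colors of $L/H_L$ (so the chart $X_0(\D_L)$ is all of $X(\D_L)$), the algebra identification via the multiplicity-free isotypic decomposition of a horospherical coordinate ring combined with the computation of $U_L$-invariants as in Lemma \ref{lemme-U-invariant}, and the twisted-product statement via parabolic induction. The paper merely compresses your last step into a citation of \cite[Prop. 14.4 and 20.13]{Ti3} and your isotypic argument into \cite[Prop. 7.6]{Ti3}; the only nitpick is that your aside ``a simple model is affine precisely when its colored cone contains all colors'' should be weakened to the sufficient implication actually used, since the converse fails (e.g.\ for affine toric varieties, which have no colors at all).
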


\begin{proof}
The $G$-isomorphism between $G\times^{P_{\F}}X(\D_L)$ and $X(\D)$ is a straightforward consequence of \cite[Prop. 14.4 and 20.13]{Ti3}. 
As $\F_L$ is the set of colors of $L/H_L$, the $B_L$-variety $X(\D_L)$ is the chart corresponding to $\D_L$ (see the remark before \cite[Cor. 13.10]{Ti3}), and thus $X(\D_L)$ is affine. 

As $X(\D_L)$ is horospherical, we have the following $L$-algebra decomposition (see \cite[Prop. 7.6]{Ti3}):
$$k[X(\D_L)]\simeq  \bigoplus_{m\in\sigma^{\vee}\cap M}k[X(\D_L)]^{(B_L)}_m\otimes_{k}V(m),$$ 
where $k[X(\D_L)]_m^{(B_L)}$ denotes the vector space generated by the $B_L$-eigenvectors of weight $m$ on which $L$ acts trivially. 
The vector space $k[X(\D_L)]_m^{(B_L)}$ identifies with the space of global sections of $\lfloor \D(m)\rfloor$. This proves the lemma.
\end{proof}

\begin{remark}
With the notation above, one may check that $X(\D_L)=X_0(\D_L)$.
\end{remark}

\begin{example}
Let us assume that $C_0=C$ is projective. Let $G=\SL_2$ and let $H$ be the unipotent radical of the subgroup of upper triangular matrices. Then $M=N=\ZZ$ and the set of colors $\F_0$ of $G/H$ is a singleton. Let $\sigma=\QQ_{\geq 0}$ and let $\D$ be a proper $\sigma$-polyhedral divisor on $C$ such that the Weil $\QQ$-divisor $\D(1)$ on $C$ is integral and very ample. Let us note that, in this situation, we have $G=L$ and thus $\D=\D_L$. Then by Lemma \ref{lemme-Levi}, the $G$-variety $X(\D,\F_0)$ is affine and $k[X(\D,\F_0)]$ identifies with
$$ \bigoplus_{d \geq 0} H^0(C,\mathcal{O}_C(d.\D(1))) \otimes_k V(d)$$
as a $G$-algebra, where $V(d)$ is the irreducible representation of $G$ of dimension $d+1$ obtained by linearizing the line bundle $\mathcal{O}_{\PP^1}(d)$. 
Then the line bundle $\mathscr{L}:=\mathcal{O}_C(\D(1)) \boxtimes \mathcal{O}_{\PP^1}(1)$ on $C \times \PP^1$ is very ample, and the $G$-variety $X(\D,\F_0)$ can be realized as the affine cone over the $G$-equivariant embedding of $C \times \PP^1$ in the projectivization of the space of global sections of $\mathscr{L}$. 
\end{example}

\subsubsection{}
The next results are criteria to characterize the singularities of a simple $G$-model $X(\D)$ of $Z$; see \cite[Prop. 5.1]{LS13} for the case of normal $T$-varieties and \cite[\S 6, Th. 7]{Tim00} for a criterion for rationality of singularities in the general setting of normal $G$-varieties of complexity one. 

We recall that a normal variety $X$ has \emph{rational singularities} if there exists a resolution of singularities $\phi: Y \to X$ such that the higher direct images of $\phi_*$ applied to $\mathcal{O}_Y$ vanish. This notion does not depend on the choice of the resolution of singularities.

\begin{theorem} \label{theorat}
Let $(\D,\F)$ be a proper colored $\sigma$-polyhedral divisor on $C_0$. The simple $G$-model $X(\D)$ of $Z$ has rational singularities if and only if one of the following assertions holds. 
\begin{enumerate}[(i)]
\item The curve $C_0$ is affine.
\item The curve $C_0$ is the projective line $\PP^{1}$ and ${\deg}\lfloor \D(m)\rfloor\geq -1$ for every $m\in\sigma^{\vee}\cap M$, where $\D(m)$ is the $\QQ$-divisor on $C_0$ defined by \eqref{D(m)}.
\end{enumerate}
\end{theorem}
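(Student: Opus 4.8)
The plan is to reduce the question about rational singularities of $X(\D)$ to a known criterion for affine $T$-varieties given by polyhedral divisors, using the parabolic induction described in Lemma \ref{lemme-Levi}. The key observation is that rational singularities is a local property and is insensitive to the fiber bundle structure: since $X(\D) \cong G \times^{P_\F} X(\D_L)$ is a locally trivial fiber bundle over the smooth base $G/P_\F$ with fiber the affine $L$-variety $X(\D_L)$, the variety $X(\D)$ has rational singularities if and only if $X(\D_L)$ does. So it suffices to characterize when the affine $L$-variety $X(\D_L)$ has rational singularities, and here the key structural fact from Lemma \ref{lemme-Levi} is that $k[X(\D_L)] = \mathcal{A}[C_0,\D_L] = \bigoplus_{m \in \sigma^\vee \cap M} H^0(C_0,\mathcal{O}_{C_0}(\D(m))) \otimes_k V(m)$.

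**First I would** handle the reduction to the torus case. The horospherical affine $L$-variety $X(\D_L)$ has an open $B_L$-chart $X_0(\D_L)$ whose ring of $U_L$-invariants is the $T$-algebra $A[C_0,\D]$, i.e.\ the coordinate ring of the affine $T$-variety $\overline{X}:=\Spec A[C_0,\D]$ associated with the polyhedral divisor $\D$ (viewed now without colors, over the torus $T$ acting via the sublattice $M$). There is a standard principle (going back to Popov, Vinberg, Kraft, and used systematically in this setting) that an affine $G$-variety with coordinate ring $\bigoplus_m A_m \otimes V(m)$ has rational singularities if and only if the affine $T$-variety with coordinate ring $\bigoplus_m A_m \chi^m$ does; more precisely one uses that $X(\D_L) /\!/ U_L$ retracts (via a degeneration to the associated graded / the horospherical contraction) onto $\overline{X}$, or one invokes the $G$-invariance of the canonical sheaf and Boutot-type arguments. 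I would cite or adapt the argument from \cite{LS13} and the fact, used in \S\ref{subsec21}, that these varieties arise as parabolic inductions, to conclude that $X(\D)$ has rational singularities $\iff \overline{X} = \Spec A[C_0,\D]$ has rational singularities.

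**Then I would** apply the known criterion for affine $T$-varieties given by a proper polyhedral divisor $\D$ on a curve $C_0$. This is exactly \cite[Prop. 5.1]{LS13} (cited just before the theorem): such a $T$-variety has rational singularities if and only if either $C_0$ is affine, or $C_0 = \PP^1$ and $\deg \lfloor \D(m) \rfloor \geq -1$ for all $m \in \sigma^\vee \cap M$. The geometric reason behind this criterion is that $A[C_0,\D]$ is computed as the ring of sections, over the affine or projective base, of the graded sheaf $\bigoplus_m \mathcal{O}_{C_0}(\D(m))$; when $C_0$ is affine all higher cohomology vanishes automatically, and when $C_0 = \PP^1$ one needs $H^1(\PP^1, \mathcal{O}(\lfloor \D(m)\rfloor)) = 0$ for all relevant $m$, which by Serre duality on $\PP^1$ is precisely the condition $\deg \lfloor \D(m)\rfloor \geq -1$; if $C_0$ has genus $\geq 1$ (or $C_0=C$ is not rational), one shows the total space has non-rational singularities along the central fiber. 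Plugging in that $A_m = H^0(C_0, \mathcal{O}_{C_0}(\D(m)))$ with the $\D(m)$ of \eqref{D(m)} gives exactly conditions (i) and (ii) of the statement.

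**The main obstacle** I anticipate is the first reduction step — justifying cleanly that rational singularities descend through the $/\!/U$ (horospherical degeneration) from $X(\D_L)$ to $\overline{X}=\Spec A[C_0,\D]$, and back. One has to be careful that the flat degeneration of $X(\D_L)$ to its horospherical contraction has a special fiber that is still normal (or at least Cohen--Macaulay with the right cohomological behaviour), so that semicontinuity of cohomology and the Elkik/Kempf-type criterion for rational singularities of the total space can be invoked; equivalently one must know that $A[C_0,\D]$ being the ring of $U$-invariants of $k[X(\D_L)]$ transports the (non-)vanishing of higher direct images correctly. Once this functoriality of rational singularities under parabolic induction and under taking $U$-invariants of horospherical varieties is in hand (and this is essentially the content of the cited works of Pauer and of Liendo--Süß in the analogous smoothness statements), the rest is a direct citation of the toric/$T$-variety criterion.
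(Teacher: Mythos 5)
Your proposal follows essentially the same route as the paper: reduce via the parabolic induction $X(\D)\cong G\times^{P_\F}X(\D_L)$ (the paper uses the chart $q^{-1}(B\bar x_0)\simeq B\bar x_0\times X(\D_L)$ meeting all germs), then pass to $U_L$-invariants to reach $\Spec A[C_0,\D]$, and finally invoke the rationality criterion of Liendo--S\"u\ss{} \cite[Prop.~5.1]{LS13}. The step you flag as the main obstacle --- that an affine $L$-variety has rational singularities if and only if its quotient by a maximal unipotent subgroup does --- is exactly what the paper handles by citing \cite[Th.~D5~(3)]{Ti3} (Popov's theorem), so no new argument is needed there.
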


\begin{proof}
In this proof, we identify $X(\D)$ with the parabolic induction of $X(\D_L)$ using Lemma \ref{lemme-Levi}. 
Let us denote by $B\bar{x}_0$ the open $B$-orbit of $G/P_{\F}$ and let 
$$q:X(\D) = G\times^{P_{\F}}X(\D_L)\rightarrow G/P_{\F}$$ 
be the ($G$-equivariant) projection. Then $q^{-1}(B\bar{x}_0)\simeq B\bar{x}_0\times X(\D_L)$ is a chart of $X(\D)$ intersecting all the germs of $X(\D)$. 
As $B\bar{x}_0$ is an affine space, it follows from \cite[Th. D5 (3)]{Ti3} that $X(\D)$ has rational singularities if and only if $X(\D_L)/\!/U_L\simeq \Spec A[C_0,\D]$ has rational singularities. We conclude by the rationality criterion for torus actions given in \cite[Prop. 5.1]{LS13}. 
\end{proof}

The next theorem gives a smoothness criterion for $X(\D)$ when $(\D,\F)$ is a proper colored polyhedral divisor on an affine curve $C_0$; see \cite[\S 4.2]{Bri91} for the spherical case, \cite[Prop. 5.1 and Th. 5.3]{LS13b} for the case of normal $T$-varieties of complexity one, and \cite[\S 3]{Mos1} for the case of embeddings of $\SL_2$ and $\PSL_2$. Our proof is inspired by a description of toroidal embeddings given in \cite[\S I\!I]{KKMS}. Roughly speaking, we prove that the smoothness of $X(\D)$, which we recall is a normal horospherical $G$-variety of complexity $1$, is tantamount to the smoothness of some normal horospherical $\G_m \times G$-varieties of complexity $0$.  

\begin{theorem}  \label{crit1} 
With the same notation as before, and assuming that $C_0$ is affine, the following statements are equivalent:
\begin{enumerate}[(i)]
\item The $G$-variety $X(\D)$ is smooth.
\item For every $z\in C_0$, the simple embeddings of the $\G_m\times G$-homogeneous space $\G_m\times G/H$ associated with the colored cones $(\mathscr{C}(\D)_z,\F)$ (see \S \ref{extraTim}) are smooth.
\end{enumerate}
\end{theorem}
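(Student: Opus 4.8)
The plan is to reduce the smoothness of $X(\D)$ to a statement about the charts $X_0(\D)$ and their distinguished $\G_m$-localizations, using the parabolic induction from Lemma \ref{lemme-Levi}. Since $X(\D) = G\times^{P_\F}X(\D_L)$ is a locally trivial fiber bundle over the smooth base $G/P_\F$ with fiber $X(\D_L)$, the variety $X(\D)$ is smooth if and only if $X(\D_L)$ is smooth; so we may work with the affine $L$-variety $X(\D_L) = X_0(\D_L)$. First I would introduce, for each $z\in C_0$, the horospherical $\G_m\times G$-homogeneous space $\G_m\times G/H$ and note that its colors are naturally identified with $\F_0$ via the second projection, while $N$ is replaced by $\ZZ\times N$ with the extra $\ZZ$-coordinate recording the order of vanishing at $z$. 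The colored cone $(\mathscr{C}(\D)_z,\F)$ then defines a simple embedding $Y_z$ of $\G_m\times G/H$, and by the parabolic induction for spherical varieties (the complexity-zero analogue of Lemma \ref{lemme-Levi}, cf. \cite[Prop. 14.4]{Ti3}) smoothness of $Y_z$ is equivalent to smoothness of the affine $L$-chart associated with the colored cone $(\mathscr{C}(\D)_z,\F)$ inside the lattice $\ZZ\times N$.

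Next I would make the local-to-global comparison on the curve side. The chart $X_0(\D_L)$ is affine with $k[X_0(\D_L)] = \mathcal{A}[C_0,\D_L]$, and since $C_0$ is affine we can work étale-locally on $C_0$: around each point $z\in C_0$ there is a local uniformizer $t_z$, and the completed (or henselized) local ring of $X_0(\D_L)$ along the fiber over $z$ is built from the single coefficient $\Delta_z$, i.e. it agrees with the corresponding local ring of the affine horospherical $\G_m\times L$-variety attached to the colored cone $\mathscr{C}(\D)_z$ — the extra $\G_m$ recording the variable $t_z$. Concretely, this is the standard observation for $T$-varieties of complexity one (the toroidal/affine-curve picture of \cite[\S I\!I]{KKMS} used on $X_0(\D_L)/\!/U_L \cong \Spec A[C_0,\D]$): smoothness of a $T$-variety over an affine curve can be tested fiberwise, each fiber contributing one copy of $\G_m$. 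Combining with the horospherical structure of $X_0(\D_L)$ (Lemma \ref{lemme-Levi} gives $k[X_0(\D_L)]^{U_L} = A[C_0,\D]$, and $X_0(\D_L)$ is smooth iff it is smooth at a point of the closed $L$-orbit in each germ, which by Luna's slice argument reduces to smoothness of the $\G_m\times L$-embedding), one gets that $X_0(\D_L)$ is smooth if and only if for every $z\in C_0$ the affine $\G_m\times L$-embedding associated with $(\mathscr{C}(\D)_z,\F)$ is smooth, which by the complexity-zero parabolic induction is equivalent to (ii). Points $z$ with $\Delta_z = \sigma$ contribute the trivial product $\G_m\times X(\D_L)$ and impose no extra condition, which is consistent.

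In more detail, for the implication (i)$\Rightarrow$(ii) I would argue that if $X(\D)$ is smooth then so is $X_0(\D_L)$, hence so is each henselian fiber over $z$, hence the corresponding $\G_m\times L$-embedding is smooth, and then the $\G_m\times G$-embedding $Y_z$ is smooth because it is parabolically induced from a smooth affine $\G_m\times L$-variety over the smooth base $(\G_m\times G)/(\G_m\times P_\F)$. For (ii)$\Rightarrow$(i) I would run the same chain in reverse: smoothness of $Y_z$ forces smoothness of its affine $\G_m\times L$-chart, these glue to give smoothness of $X_0(\D_L)$ along every fiber over $C_0$ (using that over the cofinitely many $z$ with $\Delta_z=\sigma$ the local model is the smooth product and that smoothness is an open condition, so it suffices to check the finitely many special fibers), whence $X_0(\D_L)$ is smooth, whence $X(\D) = G\times^{P_\F}X_0(\D_L)$ is smooth.

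The main obstacle I expect is the precise justification that smoothness of the affine complexity-one variety $X_0(\D_L)$ can be detected fiber-by-fiber over the affine curve $C_0$, and that each fiber's local model is exactly the affine horospherical $\G_m\times L$-variety of the colored cone $\mathscr{C}(\D)_z$ — i.e. pinning down that the extra half-line $\QQ_{\geq 0}\epsilon$ in $\E$ really corresponds to a single multiplicative parameter and that no interaction between distinct points $z, z'$ can create or destroy singularities. This is morally the content of the Rees-algebra/toroidal description in \cite[\S I\!I]{KKMS}, and the horospherical layer adds only the harmless parabolic induction over $G/P_\F$; making this rigorous in the present combinatorial language (via the description of $k[X_0(\D)]^U = A[C_0,\D]$ and Lemma \ref{lemme-U-invariant}, together with the slice theorem to pass from $U$-invariants back to the full $G$-variety) is the technical heart of the proof.
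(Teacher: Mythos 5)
Your strategy coincides with the paper's: reduce by parabolic induction to the affine $L$-variety $X(\D_L)$, then compare it locally around each $z\in C_0$ with the $\G_m\times L$-embedding attached to the colored cone $(\mathscr{C}(\D)_z,\F_L)$, the extra $\G_m$-factor accounting for the curve direction, and transfer the conclusion back to $\G_m\times G$ by the complexity-zero parabolic induction. The step you single out as the ``technical heart'' is precisely where the paper does its (short) work: it chooses an \'etale morphism $\tau_z\colon U_z\to V_z\subset\AA^1$ with $\tau_z(z)=0$ (EGA IV, 17.11.4), transports $\D_L$ to the polyhedral divisor $\D'_L$ on $\AA^1$ concentrated at $0$ with coefficient $\Delta_z$, and observes that $\tau_z$ induces by base change an \'etale morphism $X({\D_L}_{|U_z})\to X({\D'_L}_{|V_z})$, the target being an open subset of the embedding $X_{\Xi_L}$; smoothness then transfers along this \'etale map in both directions, using for $(i)\Rightarrow(ii)$ that the image is open and that $(\G_m\times L)\cdot X({\D'_L}_{|V})=X_{\Xi_L}$. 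This is cleaner than the completion/henselization comparison you sketch, though that route would also work; so there is no missing idea, only the explicit \'etale-base-change argument to write down for the step you leave open.
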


\begin{proof}
Let us fix $z \in C_0$. As $C_0$ is smooth, by \cite[I\!V,17.11.4]{EGA}, there exist open subsets $U_z \subset C_0$ and $V_z \subset \AA^1$ containing $z$ and $0$ respectively, and there exists an \'etale morphism $\tau_z: U_z \to V_z$ such that $\tau_z(z)=0$. 
Let $\D'_L:=\sum_{x \in \AA^1} \Delta'_x \cdot [x]$ with $\Delta'_0=\Delta_z$ and $\Delta'_x=\sigma$ for all $x \neq 0$ be a polyhedral divisor on $\AA^1$. Let
${\D_L}_{|U_z}$ and ${\D'_L}_{|V_z}$ denote the polyhedral divisor obtained by restricting $\D_L$ and $\D'_L$ on $U_z$ and $V_z$ respectively.
One may check that the morphism $\tau_z$ induces an \'etale morphism of algebras $\phi_z: \mathcal{A}[V_z,{\D'_L}_{|V_z}] \to \mathcal{A}[U_z,{\D_L}_{|U_z}]$. 
By Lemma \ref{lemme-Levi}, the morphism $\phi_z$ in turn induces an \'etale morphism $\delta_z: X({\D_L}_{|U_z}) \to X({\D'_L}_{|V_z})$. Indeed, identifying $X({\D_L}_{|U_z})$ with $X({\D'_L}_{|V_z}) \times_{V_z} U_z$ via \'etale base change, the morphism $\delta_z$ is simply given by the first projection.

Let us denote by $\Xi_L$ the colored cone $(\mathscr{C}(\D_L)_z,\F_L)$ and by $X_{\Xi_L}$ the corresponding embedding of $\G_m \times L/H_L$; then $X_{\Xi_L}$ is $L$-isomorphic to $X(\D'_L)$. Let $\gamma_z$ be the morphism which makes the diagram
$$\xymatrix{
    X({\D_L}_{|U_z}) \ar[d]_{\delta_z} \ar[r]^{\gamma_z} & X_{\Xi_L}  \ar[d]^{\cong}\\
    X({\D'_L}_{|V_z}) \ar@{^{(}->}[r]  & X(\D'_L)  
  }$$
commute, where the bottom arrow is an open embedding. In particular, $\gamma_z$ is an \'etale morphism.

Let us prove $(ii) \Rightarrow (i)$. Denote by $\Xi$ the colored cone $(\mathscr{C}(\D)_z,\F)$, and let $X_\Xi$ be the corresponding embedding of $\G_m \times G/H$. Then we have a $G'$-isomorphism $X_\Xi \cong G' \times^{P'_\F} X_{\Xi_L}$ (see \cite[Th. 28.2]{Ti3}), where $G'=\G_m \times G$ and $P'_\F \subset G'$ is the parabolic subgroup constructed as in \S \ref{Levi}. By assumption, $X_\Xi$ is smooth and thus so is $X_{\Xi_L}$. This implies that $X({\D_L}_{|U_z})$ is also smooth. Since $C_0$ is affine, $(X({\D_L}_{|U_z}))_{z \in C_0}$ is an open covering of $X(\D_L)$. Therefore $X(\D_L)$ is smooth and thus, by parabolic induction, so is $X(\D)$.
      
Let us prove $(i) \Rightarrow (ii)$. If $X(\D)$ is smooth, then so is $X(\D_L)$ by parabolic induction. Hence, by the diagram above, there exists an open subset $V \subset \AA^1$ containing $0$ such that $X({\D'_L}_{|V})$ is smooth. Denoting $L'=\G_m \times L$ and identifying $X({\D'_L}_{|V})$ with an open subset of $X_{\Xi_L}$, we have $L' \, \cdot \, X({\D'_L}_{|V}) = X_{\Xi_L}$. This implies that $X_{\Xi_L}$ is smooth, and thus so is $X_\Xi$.       
\end{proof}

We say that two proper colored polyhedral divisors $\D$ and $\D'$ on $C_0$ are \emph{equivalent} if $A[C_0,\D]$ and $A[C_0,\D']$ (defined by \eqref{ACD}) are isomorphic as $M$-graded algebras; see \cite[\S 8]{AH} and \cite[Prop. 4.5]{La} for a combinatorial description of the equivalence between two such polyhedral divisors. If $\D$ and $\D'$ are equivalent this does not imply a priori that $X(\D)$ and $X(\D')$ are isomorphic.

The next theorem gives a smoothness criterion for $X(\D)$ when $(\D,\F)$ is a proper colored polyhedral divisor on a projective curve $C_0=C$. 

\begin{theorem}  \label{crit2} 
With the same notation as before, and assuming that $C_0$ is projective (i.e., $C_0=C$), the following statements are equivalent:
\begin{enumerate}[(i)]
\item The $G$-variety $X(\D)$ is smooth.
\item The curve $C$ is $\PP^1$, the polyhedral divisor $\D$ is equivalent to a proper colored polyhedral divisor $\D^{0,\infty}=\sum_{z \in \PP^1} \Delta_z \cdot [z]$ with $\Delta_z=\sigma$ except when $z=0$ or $\infty$, and the simple embedding of the $\G_m\times G$-homogeneous space $\G_m\times G/H$ associated with the colored cones $(\Cc,\F)$ (see \S  \ref{extraTim}) is smooth, where $\Cc$ is the cone generated by $(\sigma \times \{0\}) \cup (\Delta_0 \times \{1\}) \cup (\Delta_{\infty} \times \{-1\})$. 
\end{enumerate}
\end{theorem}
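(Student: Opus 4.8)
The strategy parallels that of Theorem \ref{crit1}, except that now the curve is projective, so instead of covering $X(\D)$ by étale-local models near each point of an affine curve, I must exploit properness and the global structure of $C$. The key observation is that via parabolic induction (Lemma \ref{lemme-Levi}) the smoothness of $X(\D)$ is equivalent to that of $X(\D_L)$, which in turn — since $X(\D_L)$ is affine with $k[X(\D_L)]^{U_L} \simeq A[C_0,\D]$ — reduces to a statement about the $M$-graded algebra $A[C,\D]$, i.e. to the theory of normal $T$-varieties of complexity one over a projective curve. So the plan is: first reduce to $X(\D_L)$; then invoke the smoothness criterion for affine $T$-varieties of complexity one over a complete curve. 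That criterion (due to Liendo--Süß, \cite[Th. 5.3]{LS13b}, combined with the structure theory for polyhedral divisors over $\PP^1$) forces $C=\PP^1$ and forces $\D$ to be equivalent to a polyhedral divisor supported on two points $\{0,\infty\}$; the smoothness is then governed by the single cone $\mathscr{C}$ obtained by gluing $\Delta_0 \times \{1\}$ and $\Delta_\infty \times \{-1\}$ along $\sigma \times \{0\}$.

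More concretely, for $(ii)\Rightarrow(i)$: assuming $C=\PP^1$ and $\D \sim \D^{0,\infty}$, I may replace $\D$ by $\D^{0,\infty}$ (this does not change $A[C_0,\D]$, hence not $X(\D_L)$, hence not $X(\D)$ up to isomorphism — here one uses that the parabolic induction only depends on $X(\D_L)$). The $\G_m \times G$-embedding $X_{\mathscr{C}}$ of $\G_m \times G/H$ attached to $(\mathscr{C},\F)$ satisfies, by \cite[Th. 28.2]{Ti3}, a parabolic induction $X_{\mathscr{C}} \cong G' \times^{P'_\F} X_{\mathscr{C}_L}$ where $\mathscr{C}_L = (\mathscr{C},\F_L)$; smoothness of $X_{\mathscr{C}}$ is equivalent to that of $X_{\mathscr{C}_L}$, an affine toroidal-type embedding of $\G_m \times L/H_L$. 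Now the affine $(\G_m \times L)$-variety $X_{\mathscr{C}_L}$ and the affine $L$-variety $X(\D^{0,\infty}_L)$ have the same $U_L$-invariants up to the extra $\G_m$-grading: concretely $X(\D^{0,\infty}_L)$ is recovered from $X_{\mathscr{C}_L}$ as the relative spectrum over $\AA^1 = \Spec k[t]$ where $t$ is the coordinate corresponding to $\epsilon=(0,\dots,0,1)$, or conversely the normalized total coordinate space; in either direction smoothness transfers (via the usual KKMS dictionary between a complexity-one affine $T$-variety over $\PP^1$ with two special fibers and a complexity-zero affine $(\G_m \times T)$-variety, cf. \cite[\S I\!I]{KKMS}). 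Hence $X(\D_L)$ is smooth, and by Lemma \ref{lemme-Levi} so is $X(\D)$.

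For $(i)\Rightarrow(ii)$: if $X(\D)$ is smooth then $X(\D_L)$ is smooth by parabolic induction, so $A[C,\D] = k[X(\D_L)]^{U_L}$ defines a smooth affine $T$-variety of complexity one over the projective curve $C$. By the classification of smooth such varieties (again \cite[Th. 5.3]{LS13b}) one gets $C = \PP^1$ and, up to equivalence of polyhedral divisors, $\Supp(\D - \text{triv}) \subset \{0,\infty\}$, i.e. $\D \sim \D^{0,\infty}$. Finally, running the construction of $\mathscr{C}$ and the parabolic induction $X_{\mathscr{C}} \cong G' \times^{P'_\F} X_{\mathscr{C}_L}$ backwards, smoothness of $X(\D_L)$ forces smoothness of $X_{\mathscr{C}_L}$ and hence of $X_{\mathscr{C}}$. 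I expect the main obstacle to be the precise bookkeeping in the KKMS-style correspondence between the complexity-one object $X(\D^{0,\infty}_L)$ over $\PP^1$ and the complexity-zero $(\G_m \times L)$-object $X_{\mathscr{C}_L}$ — in particular checking that the ``equivalence of polyhedral divisors'' in (ii) is exactly the right notion to make the two sides match, and that no smoothness is lost or gained when passing between them; the reduction via parabolic induction, by contrast, is a formal consequence of Lemma \ref{lemme-Levi} and \cite[Th. 28.2]{Ti3}.
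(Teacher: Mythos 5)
Your reduction via parabolic induction to the affine $L$-variety $X(\D_L)$ matches the paper, but the next step contains a genuine gap: you claim that smoothness of $X(\D_L)$ ``reduces to a statement about the $M$-graded algebra $A[C,\D]$'', i.e.\ to smoothness of the $T$-variety $\Spec A[C,\D]=X(\D_L)/\!/U_L$, and you then apply the Liendo--Suess criterion to that quotient. Smoothness does not pass to the quotient by $U_L$ in either direction (unlike rationality of singularities, for which the paper can invoke \cite[Th.~D5\,(3)]{Ti3} in Theorem~\ref{theorat}): $X/\!/U$ can be singular when $X$ is smooth and vice versa, so neither implication in your argument is justified by this reduction. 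The paper avoids it entirely: after splitting off a torus factor so that $\sigma^{\vee}$ is strongly convex, it applies Luna's slice theorem to the smooth affine horospherical variety to show that it is in fact a $G$-module $V$; it then observes that $V/\!/U$ is a \emph{toric} variety (not merely smooth---it may well be singular) for a maximal torus of $GL(V)$ normalizing the image of $U$, and it is this toric structure, via \cite[\S 11]{AH}, that forces $C=\PP^1$ and the two-point support of $\D$ up to equivalence. Without an input of this kind your direction $(i)\Rightarrow(ii)$ does not go through.

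A secondary issue: when you ``replace $\D$ by $\D^{0,\infty}$'' you assert this does not change $X(\D_L)$, but equivalence of polyhedral divisors only provides an $M$-graded isomorphism $A[C,\D]\cong A[C,\D^{0,\infty}]$, and the paper explicitly warns that equivalent divisors need not give isomorphic varieties. One must lift the graded isomorphism to the full algebras $\mathcal{A}[C,\cdot\,]$; the paper does this by extracting from it a system $m\mapsto f_m$ and a lattice automorphism $\delta$ of $M$, replacing $\D^{0,\infty}$ by $\delta_{\star}(\D^{0,\infty})$, and then checking that the Batyrev--Moreau smoothness criterion for the colored cone $((\delta\times Id)(\Cc),\F)$ is equivalent to that for $(\Cc,\F)$. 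Your sketch of the $\G_m\times G$-correspondence in $(ii)\Rightarrow(i)$ is in the right spirit, but both directions require this $\delta$-bookkeeping and, for $(i)\Rightarrow(ii)$, the Luna-slice argument described above.
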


\begin{proof}
Let us prove $(i) \Rightarrow (ii)$. Suppose that $X=X(\D)$ is smooth. Then by Lemma \ref{lemme-Levi} we can assume that $X$ is affine and identify $k[X]$ with $$\mathcal{A}[C, \D]=\bigoplus_{m\in\sigma^{\vee}\cap M}H^0\left(C,\mathcal{O}_{C}(\D(m))\right)\otimes_{k} V(m).$$ 
Moreover, we can assume that $\sigma^{\vee}$ is strongly convex; otherwise, there is a non-trivial torus $D$ and a $G$-variety $X'$ such that $X \cong D \times X'$ and then we replace $X$ by $X'$. 
By Luna's slice theorem (see \cite[\S I\!I\!I, Cor. 2]{Luna}), there is a $G$-isomorphism $X \cong G \times^F V$, where $F \subset G$ is a reductive closed subgroup and $V$ is a $F$-module. It follows from the proof of Luna's slice theorem that $F$ is in fact the stabilizer of a point of $X$ and thus $F$ is a horospherical subgroup. Since $F$ is reductive and contains a maximal unipotent subgroup of $G$, it contains the semisimple part of $G$, and thus $G/F$ is a torus. Now the surjective map $G \times^F V \to G/F$ induces an inclusion $k[G/F] \subset k[G \times^F V]=k[X]$. Since $\sigma^{\vee}$ is strongly convex and $\D$ is proper,
we have $k[G/F]=k$ and so $G=F$. Therefore we obtain that $X$ is $G$-isomorphic to the $G$-module $V$. Let us identify $X$ with $V$ and let us denote by $\gamma: G \to GL(X)$ the corresponding homomorphism. If $T \subset B$ is a maximal torus, then there exists a maximal torus $\mathbb{T} \subset GL(X)$ containing $\gamma(T)$ and normalizing $\gamma(U)$, where $U$ is the unipotent radical of $B$. Therefore $X/\!/U$ is a toric variety for the action of $\mathbb{T}$. 

It follows from \cite[\S 11]{AH} that $C=\PP^1$ and $\D$ is equivalent to a polyhedral divisor $\D^{0,\infty}$ supported by $0$ and $\infty$. 
We fix an isomorphism of $M$-graded algebras $\phi: A[C,\D] \to A[C,\D^{0,\infty}]$. Then there exists a group homomorphism 
$$(M,+) \to (k(C)^{\star}, \times),\ \; m \mapsto f_m$$ 
and a linear automorphism $\delta: M \to M$ preserving $\sigma^\vee \cap M$ such that for every $m \in \sigma^\vee \cap M$, we have $\phi(f\chi^m)=f f_{m}^{-1} \chi^{\delta(m)}$. Let us denote by the same letter $\delta: N \to N$ the dual linear map of $\delta$. We define a new $\sigma$-polyhedral divisor on $C$ by $$\delta_\star (\D^{0,\infty})=\sum_{y \in C} \delta(\Delta_y^{0,\infty}) \cdot [y],$$ 
where $\Delta_y^{0,\infty}$ is the coefficient of $\D^{0,\infty}$ at the point $y \in C$. Then $\delta_\star (\D^{0,\infty})(m)=\D^{0,\infty}(\delta(m))$ for every $m \in \sigma^\vee \cap M$, and we have a $G$-isomorphism 
$$\mathcal{A}[C,\D] \to \mathcal{A}[C,\delta_\star(\D^{0,\infty})],\  f \otimes v \mapsto ff_{m}^{-1} \otimes v$$
with $f \in k(C)^\star$ and $v \in V(m)$. Therefore $X=\Spec \mathcal{A}[C,\D]$ identifies with the horospherical variety associated with the colored cone $((\delta \times Id)(\Cc),\F)$. Hence, by the smoothness criterion of \cite[\S 5]{BM13}, this is equivalent to the smoothness of the horospherical variety associated with the colored cone $(\Cc,\F)$. 

The converse implication $(ii) \Rightarrow (i)$ is proved by using the same kind of arguments and is left to the reader.    
\end{proof}

The smoothness criteria of Theorems \ref{crit1} and \ref{crit2} can be made explicit by applying the smoothness criterion in the horospherical embedding case given by the following theorem; see \cite[\S 3.5]{Pau83}, \cite[\S I\!I]{Pas}, and \cite[\S 5]{BM13}. 
\begin{theorem} \label{explicit}
Let $X$ be a $G$-equivariant embedding of the horospherical homogeneous space $G/H$ associated with a colored cone $(\Cc,\F)$ (see \S \ref{extraTim}). 
Then $X$ is smooth if and only if the following conditions are satisfied.
\begin{enumerate}[(i)]
\item The elements of $\F$ have pairwise distinct images through the map $\varrho$ defined in \S \ref{varrho}. 
\item The cone $\Cc$ is generated by a subset of a basis of $N$ containing $\varrho(\F)$.
\item We have the equality 
$$|W_I| \cdot \prod_{\alpha \in I_{\F}} a_\alpha=|W_{I \cup I_{\F}}|,$$
where $I$ is the subset of simple roots of $G$ defined in \S \ref{pasquier}, $I_\F=\{\alpha \in S \setminus I \ |\ D_\alpha \in \F\}$, $W_I$ is the subgroup of the Weyl group $W=N_G(T)/T$ generated by the simple reflexions $s_\alpha$ $(\alpha \in I)$, $a_\alpha=\left\langle \sum_{\beta \in \Phi^+ \setminus \Phi_I} \beta, \hat \alpha \right\rangle$, $\Phi^+$ is the set of positive roots with respect to $(T,B)$, and $\Phi_I$ is the set of roots that are sums of elements of $I$.
\end{enumerate}
\end{theorem}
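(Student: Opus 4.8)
The plan is to reduce the statement, by parabolic induction, to a smoothness criterion for an affine horospherical variety, settle the latter by computing a Zariski tangent space, and then recover conditions (i)--(iii) by unwinding the resulting numerical identity.

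\emph{Step 1 (reduction to the affine case).} By \cite[Th.~28.2]{Ti3} the embedding $X$ is $G$-isomorphic to a twisted product $G \times^{P_\F} X_L$, where $P_\F = P_{I \cup I_\F}$, the group $L$ is a Levi subgroup of $P_\F$ chosen as in \S\ref{Levi}, and $X_L$ is the affine $L$-embedding of $L/H_L$ attached to the colored cone $(\Cc, \F_L)$, with $\F_L$ the set of \emph{all} colors of $L/H_L$. As $G \times^{P_\F} X_L \to G/P_\F$ is a locally trivial fiber bundle over the smooth variety $G/P_\F$ with fiber $X_L$, the variety $X$ is smooth if and only if $X_L$ is. One then checks that conditions (i)--(iii) are unchanged upon replacing $(G,\F)$ by $(L,\F_L)$: the data $N$, $M$, $\Cc$, the restriction $\varrho|_\F$, and the groups $W_I$, $W_{I\cup I_\F}$ all coincide, and for (iii) one uses that $\sum_{\beta \in \Phi^+ \setminus \Phi_{I\cup I_\F}}\beta$ is fixed by $W_{I\cup I_\F}$, hence annihilated by $\hat\alpha$ for every $\alpha \in I_\F$, so that $a_\alpha$ may equally be computed inside $L$. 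Thus we may assume $X = X_L$ is affine, horospherical, and that $\F$ consists of all colors.

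\emph{Step 2 (tangent space at the fixed point).} Since $X$ is affine and horospherical, Lemma~\ref{lemme-Levi} gives $k[X] = \bigoplus_{m \in \sigma^\vee \cap M} V(m)$ as $G$-modules (with $\sigma = \Cc$), the multiplication restricting on graded pieces, as $k(X)$ is a domain, to the Cartan multiplication $V(m) \otimes V(m') \to V(m+m')$. Splitting off a torus factor as in the proof of Theorem~\ref{crit2}, we may assume $\sigma^\vee$ is strongly convex; then $\mathfrak m := \bigoplus_{m \neq 0} V(m)$ is the maximal ideal of the unique $G$-fixed point $x_0$ of $X$, which (as $k[X]^G = k$) lies in the closure of every $G$-orbit, so that $X$ is smooth if and only if it is smooth at $x_0$, i.e. $\dim_k \mathfrak m/\mathfrak m^2 = \dim X$. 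Because $\mathfrak m^2$ is $M$-homogeneous and, for nonzero $m, m' \in \sigma^\vee \cap M$, the map $V(m)\otimes V(m') \to V(m+m')$ is surjective (its image being a nonzero submodule of the irreducible $V(m+m')$, by multiplicity-freeness of $k[X]$), one gets $\mathfrak m/\mathfrak m^2 \cong \bigoplus_{m \in \mathcal H} V(m)$ with $\mathcal H$ the Hilbert basis of the normal pointed monoid $\sigma^\vee \cap M$. Hence $X$ is smooth if and only if $\sum_{m \in \mathcal H} \dim V(m) = \dim X = (|\Phi_{I'}^+| - |\Phi_I^+|) + \rk(M)$, with $I' = I \cup I_\F$.

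\emph{Step 3 (translation into (i)--(iii)).} Condition (ii) says precisely that $\sigma^\vee \cap M$ is a free monoid, i.e. $\mathcal H$ is the basis $(m_i)_i$ of $M$ dual to a basis $(v_i)_i$ of $N$ generating $\Cc$, with $v_1, \dots, v_k = \varrho(D_{\alpha_1}), \dots, \varrho(D_{\alpha_k})$; together with (i) this forces $\langle m_i, \hat\alpha_j\rangle = \delta_{ij}$ for all $\alpha_j \in I_\F$, whence $\dim V(m_i) = 1$ for $i > k$ and $\dim V(m_i) = \dim V_L(\omega_{\alpha_i})$ for $i \le k$, where $\omega_{\alpha_i}$ is the fundamental weight of $L$ attached to $\alpha_i$ (the central torus of $L$ acting by a scalar does not change dimensions). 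Granting (i) and (ii), the numerical identity of Step 2 thus becomes $\sum_{\alpha \in I_\F}\bigl(\dim V_L(\omega_\alpha) - 1\bigr) = |\Phi_{I'}^+| - |\Phi_I^+|$, and the remaining task is the purely root-theoretic equivalence of this equality with $|W_I| \cdot \prod_{\alpha \in I_\F} a_\alpha = |W_{I\cup I_\F}|$, in which the Weyl dimension formula enters. This last equivalence is the heart of the matter, and I expect it to be the main obstacle; it is exactly what is established in \cite[\S 3.5]{Pau83}, \cite[\S I\!I]{Pas}, and \cite[\S 5]{BM13}. For the converse direction one checks, by the same analysis of $\mathcal H$ and of $\dim V(m)$, that if any of (i), (ii), (iii) fails then $\sum_{m \in \mathcal H} \dim V(m) > \dim X$, so that $X$ is singular.
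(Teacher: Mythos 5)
First, a point of context: the paper does not prove Theorem \ref{explicit} at all — it is quoted from the literature, with the "proof" consisting of the references \cite[\S 3.5]{Pau83}, \cite[\S I\!I]{Pas}, and \cite[\S 5]{BM13}. Your outline follows the same strategy as those sources, and Steps 1 and 2 are essentially sound: the reduction by parabolic induction is correct (including the observation that $a_\alpha$ is unchanged when computed in $L$, since $\sum_{\beta\in\Phi^+\setminus\Phi_{I\cup I_\F}}\beta$ is $W_{I\cup I_\F}$-fixed and hence killed by $\hat\alpha$ for $\alpha\in I_\F$), and the identification $\mathfrak m/\mathfrak m^2\cong\bigoplus_{m\in\mathcal H}V(m)$ via surjectivity of the Cartan multiplication and multiplicity-freeness is a legitimate way to convert smoothness at the fixed point into the numerical identity $\sum_{m\in\mathcal H}\dim V(m)=\dim X$ (using that $\dim_k\mathfrak m/\mathfrak m^2\geq\dim X$ always, and that the fixed point lies in every orbit closure).

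The genuine gap is in Step 3, and you name it yourself: the equivalence, granting (i) and (ii), of $\sum_{\alpha\in I_\F}\bigl(\dim V_L(\omega_\alpha)-1\bigr)=|\Phi^+_{I\cup I_\F}|-|\Phi^+_I|$ with the identity $|W_I|\cdot\prod_{\alpha\in I_\F}a_\alpha=|W_{I\cup I_\F}|$ is exactly the nontrivial combinatorial content of the theorem, and you defer it to the very references the theorem is quoted from — which makes the argument circular as a self-contained proof. The converse direction is likewise only asserted: you need that failure of \emph{any} of (i), (ii), (iii) forces $\sum_{m\in\mathcal H}\dim V(m)>\dim X$, which requires a genuine case analysis (e.g.\ if the monoid $\sigma^\vee\cap M$ is not free one must still control $\sum_{m\in\mathcal H}\dim V(m)$ against $\dim X$, not merely $|\mathcal H|$ against $\rk M$), and also a justification that condition (ii)'s requirement that the basis \emph{contain} $\varrho(\F)$ — which is strictly more than freeness of the weight monoid — is forced by the equality. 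Until those two pieces are supplied, what you have is a correct reduction of the theorem to its hardest step, not a proof.
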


\subsection{Decoloration morphism}  \label{subsec22}

In this section, we prove the existence of the decoloration morphism for the normal horospherical $G$-varieties of complexity one; see \cite[\S 3.3]{Bri91} for the spherical case. This will be used several times in a crucial way to prove our statements in the following, and also to construct an explicit resolution of singularities of $X(\Sigma)$ in Proposition \ref{res}.

\begin{definition}
Let $\ES = \{(\D^i,\F^i)\}_{i\in J}$ be a colored divisorial fan, then the \emph{decoloration} of $\ES$ is the colored divisorial fan 
$$\ES_{\dc}:= \{(\D^i,\emptyset)\}_{i\in J}.$$ 
A $G$-model $X(\ES)$ of $Z$ such that $\ES=\ES_{\dc}$ is called \emph{quasi-toroidal}.
As a consequence of the description of the germs of a $G$-model of $Z$, this notion does not depend on the choice of $\ES'$. 
Likewise, if $(\Cc_0, \F_0)$ is the colored datum (see \S \ref{color_datum}) of some germ $\Gamma' \subset X(\ES)$, then we say that the germ $\Gamma' \subset X(\ES_{\dc})$ corresponding to the colored datum $(\Cc_0, \emptyset)$ is the \emph{decoloration} of $\Gamma'$.
\end{definition}

Let $\ES = \{(\D^i,\F^i)\}_{i\in J}$ be a colored divisorial fan, then the collection of $k$-schemes $Y(\D^i) = \Spec A[C_0^i,\D^i]$, equipped with their $K$-action, glue together to give a normal $K$-variety of complexity one that we denote by $Y(\ES)$; see \cite[Th. 5.3 and Remark 7.4 (ii)]{AHS}. 

Before stating the next result, let us introduce some notation. 
Let $\Gamma \subset X$ be a germ of a $G$-model of $Z$; we say that $\Gamma$ is a \emph{geometric realization} of $\mathcal{O}_{X,\Gamma}$ in the variety $X$. The \emph{support} of $\Gamma$, denoted by ${\Supp}(\Gamma)$, is the set of $G$-valuations $v$ of $k(Z)$ such that $\mathcal{O}_v$ dominates the local ring $\mathcal{O}_{X,\Gamma}$.

\begin{proposition} \label{dec} 
Let $\ES = \{(\D^i,\F^i)\}_{i\in J}$ be a colored divisorial fan, and let $X_0^i$ and $X_{\dc}^i$ be the charts corresponding to $(\D^i,\F^i)$ and $(\D^i,\emptyset)$ respectively. The inclusions $k[X_0^i]\subset k[X_{\dc}^i]$ induce a proper birational $G$-morphism 
$$\pi_{\dc}:X(\ES_{\dc})\rightarrow X(\ES).$$ 
Moreover, for every germ $\Gamma\subset X$, the subset $\pi_{\dc}^{-1}(\Gamma)$ is the germ obtained by decoloring the colored datum of $\Gamma$. 
Also, there exists a $G$-isomorphism between $X(\ES_{\dc})$ and the twisted product $G/H\times^{K}Y(\ES)$, where $Y(\ES)$ is the $K$-variety defined above and $K=P/H$ acts on $G/H$ as follows: for every $g\in G$, for every $pH \in K$, we have $pH\cdot gH = gp^{-1}H$. 
\end{proposition}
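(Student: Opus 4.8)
The plan is to build the decoloration morphism chart by chart, then glue, and finally identify the quasi-toroidal model $X(\ES_{\dc})$ with a twisted product. First I would check that for each $i \in J$ there is a natural inclusion of $B$-algebras $k[X_0^i] \subset k[X_{\dc}^i]$: both are subalgebras of $k(Z)$ cut out inside $k(C) \otimes_k k[Bx_0]$ by valuative inequalities (see \S\ref{simplemodels}), and since the chart for $(\D^i,\emptyset)$ uses fewer color valuations $v_D$ ($\F$ being replaced by $\emptyset$), we get $k[X_0^i] \subset k[X_{\dc}^i]$. This inclusion is an equality after inverting the $B$-semiinvariants whose weights lie off the walls carrying the colors of $\F^i$ (equivalently, the two charts agree over the open $G$-orbit stratum $C' \times G/H$), so the induced morphism $X_{\dc}^i \to X_0^i$ is birational; properness follows because $X_{\dc}^i \to X_0^i$ is $B$-equivariant, both are of finite type, and on the level of valuative criterion the hypercone $\mathscr{C}(\D^i)$ is unchanged — only the coloring is dropped — so every $G$-valuation dominating a point of $X_0^i$ already dominates a point of $X_{\dc}^i$ (this is exactly the statement that the fan $\ES_{\dc}$ has the same support as $\ES$). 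Passing to the $G$-saturations $X(\D^i) = G \cdot X_0^i$ and $X(\D^i,\emptyset) = G \cdot X_{\dc}^i$ and gluing over $J$ — the gluing data being compatible because hyperfaces are sent to hyperfaces under decoloration — yields the proper birational $G$-morphism $\pi_{\dc} : X(\ES_{\dc}) \to X(\ES)$.

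Next I would compute the fibers over germs. Fix a germ $\Gamma \subset X = X(\ES)$ with colored datum $(\Cc_1,\F_1)$ as in \S\ref{color_datum}. By \cite[Th. 1]{Sum74} choose a chart $X_0^i$ meeting $\Gamma$; then $\Gamma \cap X_0^i$ is the vanishing locus in $\Spec k[X_0^i]$ of the ideal spanned by the $B$-semiinvariants $f\chi^m$ with $v(f\chi^m) > 0$ for some $v$ in the relative interior of $\Cc_1$ (Timashev's description). Because $\pi_{\dc}$ is $G$-equivariant, $\pi_{\dc}^{-1}(\Gamma)$ is a $G$-stable closed subset, and since $\pi_{\dc}$ is birational and proper this preimage is again a germ (irreducibility will follow from the combinatorial description: the colored data of germs of $X(\ES_{\dc})$ are the pairs $(\Cc_1,\emptyset)$, and the hyperface $\Cc_1$ is unchanged by decoloration). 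Concretely, over the chart the preimage is cut out in $\Spec k[X_{\dc}^i]$ by the same valuative condition — only the set of colors in the datum is emptied — which is precisely the decoloration of $\Gamma$; I would phrase this using the support ${\Supp}(\Gamma)$: ${\Supp}(\pi_{\dc}^{-1}(\Gamma)) = {\Supp}(\Gamma) \cap N_\QQ$ reads off that the hyperface is preserved while the colors are dropped.

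For the last assertion I would invoke parabolic induction. By Lemma \ref{lemme-Levi}, applied with the empty set of colors (so $\F = \emptyset$ forces $I' = I$, whence $P_\F = P = N_G(H)$, $L = P/R_u(P)$ acting through its torus quotient $K = P/H$, and $H_L = H \cap L$ with $L/H_L = P/H = K$), each chart $X_{\dc}^i = X_0(\D^i,\emptyset)$ is $G$-isomorphic to the twisted product $G \times^{P} X(\D^i_L)$, and $X(\D^i_L) = \Spec \mathcal{A}[C_0^i, \D^i_L]$; but when $L/H_L = K$ is a torus the simple $L$-modules $V(m)$ are one-dimensional, so $\mathcal{A}[C_0^i,\D^i_L] = A[C_0^i,\D^i] = k[Y(\D^i)]$ and the $P$-action on $X(\D^i_L) = Y(\D^i)$ factors through $K$. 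Since $G \times^P Y(\D^i) = (G/H) \times^K Y(\D^i)$ (passing to the intermediate quotient by $H$, using that $H$ acts trivially on $Y(\D^i)$ and $P/H = K$), these isomorphisms are compatible with the gluing maps of $\ES_{\dc}$ and of $\ES$ — the cocycle describing the bundle $G/P$ and the transition functions of $Y(\ES)$ being the same on both sides — and therefore glue to a $G$-isomorphism $X(\ES_{\dc}) \cong (G/H) \times^K Y(\ES)$ with $K$ acting on $G/H$ by $pH \cdot gH = gp^{-1}H$ as asserted.

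I expect the main obstacle to be the properness of $\pi_{\dc}$ together with the precise identification of $\pi_{\dc}^{-1}(\Gamma)$ as an irreducible germ: one must argue carefully that emptying the colors in a colored hyperface does not split the corresponding $G$-orbit closure, and that no collapsing of strata occurs outside the expected one, which is where the valuative/combinatorial dictionary of \cite[Th. 16.19]{Ti3} and \cite[Th. 12.6]{Ti3} has to be used with care; everything else is a bookkeeping of inclusions of semiinvariant algebras and of gluing cocycles.
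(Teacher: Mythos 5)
Your proposal follows essentially the same route as the paper: chart-level inclusions of semiinvariant algebras inducing a birational map, extension to a proper birational $G$-morphism using that decoloration leaves the hypercones (hence the supports of germs) unchanged, and identification of $X(\ES_{\dc})$ with $G/H\times^{K}Y(\ES)$ via parabolic induction with $P_{\emptyset}=P$ and $L/H_L=K$ so that the modules $V(m)$ are one-dimensional. One small slip: the support of the decolored germ equals ${\Supp}(\Gamma)$ itself, not ${\Supp}(\Gamma)\cap N_{\QQ}$ --- the point the paper extracts from Timashev (Prop.\ 14.1(2)) is precisely that the support depends only on the hyperface and not on the colors, so the two germs have identical local rings and the preimage is the single decolored germ.
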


\begin{proof}
Let us fix an index $i\in J$. The inclusion $k[X_0^i]\subset k[X_{\dc}^i]$ induces a birational $B$-morphism $\iota_0:X^i_{\dc} \to X^i_0$. The latter induces a $G$-equivariant birational map $\iota: X(\D^i,\emptyset) \dashedrightarrow X(\D^i,\F^i)$. By \cite[Prop. 12.12]{Ti3}, the morphism $\iota$ extends to a $G$-morphism 
$$\pi_{\dc}^i: X(\D^i,\emptyset)\rightarrow X(\D^i,\F^i)$$
if and only if for every germ $\Gamma \subset X(\D^i,\emptyset)$, there exists a (necessarily unique) germ $\Gamma'\subset X(\D^i, \F^i)$ such that the local ring $\mathcal{O}_{X(\D^i,\emptyset), \Gamma}$ dominates $\mathcal{O}_{X(\D^i,\F^i), \Gamma'}$. 
Let us consider the colored datum $(\Cc^i_0,\emptyset)$ of the germ $\Gamma\subset X(\D^i,\emptyset)$, and denote
$$\F_0^i = \{D\in\F^i\ |\ \varrho(D)\in \Cc_0^i\}.$$
Then $(\Cc_0^i,\F_0^i)$ is the colored datum of a germ $\Gamma'\subset X(\D^i,\F^i)$. 
By \cite[Prop. 14.1 (2)]{Ti3} or \cite[\S 3.8]{Kno93}, the support of a germ associated to a color datum $(\Cc',\F')$ depends only on $\Cc'$, and thus ${\Supp}(\Gamma) = {\Supp}(\Gamma')$. 
This implies the equality $\mathcal{O}_{X(\D^i,\emptyset), \Gamma} = \mathcal{O}_{X(\D^i,\F^i), \Gamma'}$; see the proof of \cite[Prop. 14.1(1)]{Ti3}. In particular, $\mathcal{O}_{X(\D^i,\emptyset), \Gamma}$ dominates $\mathcal{O}_{X(\D^i,\F^i), \Gamma'}$ and thus $\iota$ extends to a $G$-morphism. 

Let now $\Gamma' \subset X(\D^i,\F^i)$ be an arbitrary germ. Since the induced map 
$(\pi_{\dc}^{i})_{*}$ 
is the identity on $\mathscr{E}$ (see the remark before \cite[Th. 12.13]{Ti3}), the subset $(\pi_{\dc}^i)^{-1}(\Gamma')$ is irreducible and 
coincides with the decoloration of $\Gamma'$.

The properness of $\pi_{\dc}^i$ follows from the properness criterion given by \cite[Th. 12.13]{Ti3}. 
The existence and the properties of the morphism 
$$\pi_{\dc}:X(\ES_{\dc})\rightarrow X(\ES)$$ 
are then obtained by gluing.

For the last claim, it suffices to show that $X(\D,\emptyset)$ is $G$-isomorphic to $G/H\times^{K}Y(\D)$. The latter identifies with the $G$-variety $G\times^{P}Y(\D)$ obtained by parabolic induction. Now it follows from \cite[Prop. 14.4 and 20.13]{Ti3} that $X(\D,\emptyset)$ and the $G\times^{P}Y(\D)$ have the same combinatorics data, whence the result.
\end{proof}

\begin{example} 
We consider the natural action of $G = \SL_{3}$ on $\AS^{3} = \mathbb{A}^{3}\setminus \{(0,0,0)\}$. Let $H$ be the isotropy subgroup of the point $(1,0,0)$ for this action. Then $H$ is a horospherical subgroup of $G$ and $\AS^{3} \cong G/H$. Also, the torus $K=N_G(H)/H \cong \G_m$ acts diagonally on $\AS^{3}$ and the fibration $G/H = \AS^{3}\rightarrow G/P = \PP^{2}$ is simply the quotient morphism for the $\G_m$-action.
   
Let us consider the colored $\sigma$-polyhedral divisor on $\AA^1=\Spec k[t]$ defined by $\F = \emptyset$ and $\D = [\frac{1}{2}, +\infty[\cdot [0]$, where 
$N_{\QQ} = \QQ$, and $\sigma  = \QQ_{\geq 0}$. 
The $k$-algebra 
$$A[\AA^{1},\D] = \bigoplus_{m\geq 0}k[t]t^{-\lfloor \frac{1}{2}m\rfloor}\chi^{m}$$
is generated by the homogeneous elements $t, \chi^{1},$ and $\frac{1}{t}\chi^{2}$.
Therefore, $Y(\D)=\Spec A[\AA^{1},\D]$ can be identified with the affine surface $V(xz-y^{2})\subset \AA^{3}$ equipped with the $\G_{m}$-action defined by $\lambda\cdot (x,y,z) = (x, \lambda^{-1} y, \lambda^{-2} z)$ with $\lambda\in \G_{m}$.
 
Denoting by $(x_1,x_2,x_3)$ a system of coordinates of $\mathbb{A}^{3}$, the twisted action of  
$\G_{m}$ on the product $G/H\times Y(\D)$ is given by  
$$\lambda \cdot (x_{1}, x_{2}, x_{3}, x, y, z) = (\lambda^{-1} x_{1}, \lambda^{-1} x_{2}, \lambda^{-1} x_{3}, x, \lambda^{-1} y, \lambda^{-2} z).$$ 
By Proposition \ref{dec}, the $G$-variety $X:=X(\D,\emptyset)$ identifies with the quotient $G/H\times^{K}Y(\D)$. 
Hence, $X$ is the hypersurface $xy-z^{2} = 0$ in the complement of  
$$\{[0:0:0:x:y:z]\,|\, [x:y:z]\in \PP(0,-1,-2)\}$$
in the weighted projective space $X':=\PP(-1, -1, -1, 0, -1,-2)$. 
Let $B \subset G$ be the Borel subgroup of upper triangular matrices. To determine a chart of $X$, it suffices to determine the inverse image of the open $B$-orbit in $\PP^{2} = G/P$ through the projection $q:X=G/H\times^{K}Y(\D) \to G/P$. The open orbit $B\bar{x}_{0}\subset \PP^{2}$ is precisely  
$\PP^{2}\setminus\{x_{3} = 0\}\simeq \AA^{2}$. Thus the chart $X_0(\D)=q^{-1}(B\bar{x}_{0})$ is the hypersurface $xy-z^{2} = 0$ in $X'\setminus \{x_{3} = 0\}$ which is isomorphic to $\AA^{2}\times V(xy-z^{2})$. 
\end{example}

\subsection{Parametrization of the stable prime divisors} \label{subsec23}  

In this section, we start by describing in Theorem \ref{theodiv} the germs of codimension one of a normal horospherical $G$-variety of complexity one $X$. From this, we deduce a description of the class group of $X$ by generators and relations; see Corollary \ref{clgroup}. Next, we obtain a factoriality criterion for $X$; see Corollary \ref{cordiv}. Finally, in \S \ref{Cartier}, we relate the description of stable Cartier divisors obtained by Timashev in \cite{Tim00} to our description of stable Weil divisors.     

\subsubsection{}  \label{RayVert}
To state our results we need first to introduce the set of vertices and the set of extremal rays of a colored polyhedral divisor.
Let $(\D,\F)$ be an element of a colored divisorial fan $\ES$ with $\D = \sum_{z\in C_0}\Delta_z\cdot [z]$.
The \emph{set of vertices of $\D$}, denoted by $\Vert(\D)$, consists in pairs $(z,v)$ where $z\in C_0$ and $v\in\Delta_z(0)$ is a vertex of $\Delta_z$.
If $\ES = \{(\D^i,\F^i)\}_{i\in J}$, then we put 
$$\Vert(\ES):= \bigcup_{i\in J}\Vert(\D^i)\subset C\times N_{\QQ}.$$
The \emph{set of extremal rays} of $\D$, denoted by $\Ray(\D)$ or $\Ray(\D,\F)$, consists in extremal rays $\rho\subset \sigma$ such that $\rho\cap \varrho(\F) = \emptyset$, and satisfying $\rho\cap {\deg}\D = \emptyset$ when $C_0=C$.  
To simplify the notation, we denote by the same letter an extremal ray of a polyhedral cone of $N_{\QQ}$ and its primitive vector with respect to the lattice $N$. 
We also denote 
$$\Ray(\ES):= \bigcup_{i\in J}\Ray(\D^i)\subset N_{\QQ},$$
where we recall that $N_\QQ$ naturally identifies with a subset of $\E$; see \S \ref{defE}.
Finally, we denote by $C_{\ES}$ the union of open subsets $\bigcup_{i\in J}C_0^i\subset C$, where $C_0^i$ is the curve on which $\D^i\in \ES$ is defined.

\subsubsection{}
In the next theorem we parametrize the set of $G$-divisors of a $G$-model $X(\ES)$ of $Z$ by the set $\Vert(\ES)\coprod\Ray(\ES)$. 
This description is a natural generalization of the case of normal $T$-varieties specialized to the case of $T$-actions of complexity one; see \cite[Th. 4.22]{FZ} and \cite[Prop. 3.13]{PS}. 

\begin{theorem} \label{theodiv}
Let ${\Div}(\ES)$ denote the set of $G$-divisors of $X(\ES)$. With the notation above, the map
$$ \Vert(\ES)\coprod\Ray(\ES)\rightarrow {\Div}(\ES),\ \ (z,v)\mapsto D_{(z,v)} ,\  \rho\mapsto D_{\rho}$$
which to the vertex $(z, v)$ associates the germ $D_{(z,v)}$ of $X(\ES)$ defined by the colored datum $([(z, \QQ_{\geq 0}(v,1))],\emptyset)$ resp. to the ray $\rho$ associates the germ $D_\rho$ of $X(\ES)$ defined by the colored datum $(\rho,\emptyset)=([(\cdot,\rho,0)],\emptyset)$, is a bijection (see \S \ref{color_datum} for the definition of colored datum).
\end{theorem}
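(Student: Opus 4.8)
The plan is to reduce the statement to the combinatorial classification of germs of codimension one in a simple $G$-model, which by \cite[Th.~16.19(2)]{Ti3} (recalled in \S\ref{color_datum}) is given by hyperfaces of the defining hypercone. Since every $G$-model $X(\ES)$ is covered by the simple models $X(\D^i)$ and a $G$-divisor of $X(\ES)$ restricts to a $G$-divisor (or the empty set) on each $X(\D^i)$, it suffices to prove the statement for a single simple $G$-model $X(\D)$ associated with a proper colored $\sigma$-polyhedral divisor $(\D,\F)$ on $C_0$, and then observe that the gluing data of $\ES$ match up the germs coming from common hyperfaces; this is exactly why the target is indexed by the \emph{unions} $\Vert(\ES)$ and $\Ray(\ES)$ rather than by disjoint unions over $i \in J$.

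For a single simple model I would first translate via Remark~\ref{link} between the colored hypercone $\mathscr{C}(\D) = \bigcup_{z} \{z\}\times\mathscr{C}(\D)_z$ and the polyhedral divisor, recalling that $\mathscr{C}(\D)_z$ is generated by $\sigma\times\{0\}$ and $\Delta_z\times\{1\}$. A germ of $X(\D)$ has colored datum $(\Cc_1,\F_1)$ with $\Cc_1$ a hyperface of $\mathscr{C}(\D)$ and $\F_1 = \{D\in\F \mid \varrho(D)\in\Cc_1\}$; the germ has codimension one in $X(\D)$ precisely when the associated $G$-orbit is a divisor, which (by the dimension formula for orbits in \cite[\S16]{Ti3}, e.g.\ \cite[Th.~16.19]{Ti3}) happens exactly when the corank of $\Cc_1$ in $\E$ is one, i.e.\ when each $\Cc_1{}_z$ is a facet of $\mathscr{C}(\D)_z$ whose generic structure (for all but finitely many $z$) is $(\mathscr{C}(\D)\cap N_\QQ)$-type. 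I would then enumerate the codimension-one hyperfaces: either (a) $\Cc_1$ is the ``horizontal'' facet contained in $N_\QQ\times\{0\}$ spanned by an extremal ray $\rho$ of $\sigma$, giving the datum $(\rho,\emptyset)$ — and the condition $\rho\cap\varrho(\F)=\emptyset$ is forced by $0\notin\varrho(\F)$ together with $\F_1 = \F\cap\varrho^{-1}(\Cc_1)$ being empty, while $\rho\cap\deg\D=\emptyset$ in the projective case comes from the admissibility/properness condition $\deg\D\subsetneq\sigma$ and the structure of $\mathscr{C}(\D)$ at the ``generic'' fibers; or (b) $\Cc_1$ is a ``tilted'' facet meeting $\epsilon+N_\QQ$ in a vertex, i.e.\ $\Cc_1{}_z = \QQ_{\geq 0}(v,1)$ for a single $z\in C_0$ and a vertex $v\in\Delta_z(0)$, giving the datum $([(z,\QQ_{\geq 0}(v,1))],\emptyset)$, and the requirement $\F_1=\emptyset$ is automatic since $\varrho(\F)\subset\sigma\subset N_\QQ\times\{0\}$ cannot lie on a ray generated by $(v,1)$ unless $v=0$, a case excluded because $0$ would then be a vertex forcing $\Delta_z$ to be $\sigma$ itself rather than contributing a new divisor (handled by the extremal-ray case instead). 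This exhausts all codimension-one hyperfaces, and the map in the theorem is then literally the restriction of the germ--colored-datum correspondence to codimension one, hence injective; surjectivity is the statement that these are all the codimension-one germs, which is the content of the enumeration.

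Concretely the steps are: (1) reduce to one simple model by the open covering and gluing argument; (2) recall the germ classification and the codimension-one criterion from \cite[Th.~16.19]{Ti3}; (3) classify codimension-one hyperfaces of $\mathscr{C}(\D)$ into the horizontal type (extremal rays of $\sigma$) and the tilted type (vertices of the $\Delta_z$), checking in each case that the color part of the datum is forced to be empty because $\varrho(\F)\subset\sigma$ and $0\notin\varrho(\F)$; (4) check that the constraints $\rho\cap\varrho(\F)=\emptyset$ and, when $C_0=C$, $\rho\cap\deg\D=\emptyset$ defining $\Ray(\D)$ are exactly the ones under which the horizontal facet actually defines a germ of $X(\D)$ rather than being absent — this uses the properness condition $\deg\D\subsetneq\sigma$ and the description \eqref{D(m)} of $\D(m)$; (5) deduce the bijection for $X(\ES)$ by tracking that a vertex or ray lying in several $\Cc(\D^i)$ gives, via the common-hyperface condition in the definition of a colored divisorial fan, a single well-defined $G$-divisor of $X(\ES)$.

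The main obstacle I expect is step (4): correctly pinning down which extremal rays of $\sigma$ (and which vertices, though vertices are more clearly in bijection) genuinely contribute a $G$-stable prime divisor, particularly in the projective case $C_0=C$ where the ``degree'' $\deg\D = \sum_{z\in C}\Delta_z$ enters. The subtlety is that an extremal ray $\rho$ of $\sigma$ meeting $\deg\D$ corresponds, under the hypercone picture, to a face of $\mathscr{C}(\D)$ whose generic fiber is strictly smaller than $\mathscr{C}(\D)\cap N_\QQ$ along a cofinite set of $z$, so it fails to define a divisorial germ; disentangling this from the admissibility condition of \cite[Def.~16.12]{Ti3} and showing it matches the condition $\rho\cap\deg\D=\emptyset$ in the definition of $\Ray(\D)$ is the technical heart. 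By contrast, the affine case and the vertex case are essentially bookkeeping once the germ classification is in hand.
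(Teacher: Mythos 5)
Your strategy---enumerate the codimension-one hyperfaces of the hypercone directly from the germ classification of \cite[Th.~16.19(2)]{Ti3}---is genuinely different from the paper's, which reduces to the quasi-toroidal case via the decoloration morphism of Proposition \ref{dec} and parabolic induction, and then quotes the parametrization of invariant divisors on the complexity-one $T$-variety $Y(\D)$ (\cite[Prop.~3.13]{PS}) together with \cite[Lemma~2.4]{Kn} for the spherical fibre. The reduction to a single simple model and the vertex-type facets are fine. But there is a genuine gap at the heart of your enumeration: you assert that a germ has codimension one ``exactly when the corank of $\Cc_1$ in $\E$ is one,'' and then claim that the colour part of the datum ``is forced to be empty.'' Neither is correct for the ray-type facets. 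An extremal ray $\rho$ of $\sigma$ may perfectly well contain $\varrho(D_\alpha)$ for some $D_\alpha\in\F$ (the hypothesis $\varrho(\F)\subset\sigma$ does nothing to prevent this); the germ with colored datum $(\rho,\F_1)$, $\F_1\neq\emptyset$, then exists but has codimension at least two, and this is exactly why such rays are excluded from $\Ray(\D)$. The codimension of a germ is not a function of the dimension of its hyperface alone: for $G=\SL_2$ and $H=U$, the simple embedding $\AA^2$ of $\AA^2\setminus\{0\}$ has colored cone $(\QQ_{\geq 0},\{D_\alpha\})$ of dimension one, yet its closed orbit is the origin, of codimension two, whereas the decoloured embedding $\Bl_0(\AA^2)$ with cone $(\QQ_{\geq 0},\emptyset)$ has a divisorial closed orbit. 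Proving that $\F_1\neq\emptyset$ forces codimension $\geq 2$ is the non-trivial content of the paper's argument, which localizes $\D$ at a homogeneous $f\chi^m$ with $m$ in the relative interior of $\rho^{\perp}\cap\sigma^{\vee}$ so as to realize the germ inside $(C_0)^m_f\times X'$, where $X'$ is the spherical embedding with colored cone $(\rho,\F_1)$, and then invokes \cite[Lemma~2.4]{Kn}. Your proposal contains no substitute for this step.

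A second, lesser, gap is your step (4): you correctly single out the condition $\rho\cap\deg\D=\emptyset$ in the projective case as the delicate point, but you only describe what would have to be checked rather than checking it. The paper sidesteps this entirely, since after decoloration and parabolic induction the question lives on the $K$-variety $Y(\D)$, where \cite[Prop.~3.13]{PS} already encodes the degree condition. If you wish to keep the direct combinatorial route, you must actually derive from the admissibility of the hypercone that a ray meeting $\deg\D$ is not the colored datum of any divisorial germ; as written this remains an announced difficulty rather than an argument.
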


\begin{proof}
Without loss of generality, we can suppose that $X(\ES) = X(\D)$ is given by a proper colored $\sigma$-polyhedral divisor $(\D,\F)$ on a dense open subset $C_0\subset C$, where $\D =\sum_{z\in C_0}\Delta_z\cdot [z]$. 
By \cite[Th. 16.19 (2)]{Ti3}, the maximal germs of $X(\D)$ have color data of the form $(\rho,\F_{1})$ and $([(z, \QQ_{\geq 0}(v,1))],\emptyset)$ where 
$\rho\subset \sigma$ is an extremal ray, $\F_{1} = \{D_{\alpha}\in \F \ |\ \varrho(D_{\alpha})\in \rho\}$, 
$z\in C_0$, and $v\in\Delta_z(0)$. 
We are going to examine in each case the maximal germs corresponding to $G$-divisors. We proceed in three steps.

\emph{Step 1:} We consider the case where $X(\D)$ is quasi-toroidal, i.e., $\F=\emptyset$. Then, by Proposition \ref{dec}, $X(\D)$ identifies with $G/H\times^{K}Y(\D)$ as a $G$-variety. 
By \cite[I.5.21(1)]{Jan87}, there is a natural bijection between the set of $K$-divisors on $Y(\D)$ and the set of $G$-divisors on $G/H\times^{K}Y(\D)$. 
By \cite[Prop. 3.13]{PS}, the set of $K$-divisors on $Y(\D)$ is parametrized by the set $\Vert(\D)\coprod\Ray(\D)$:
$$(z,v)\mapsto \Gamma_{(z,v)},\ \rho\mapsto \Gamma_{\rho}.$$
The $G$-valuation $v_{D_{\rho}}$ with $D_{\rho} = G/H\times^{K}\Gamma_{\rho}$ resp. $v_{D_{(z,v)}}$ with $D_{(z,v)} = G/H\times^{K}\Gamma_{(z,v)}$, is represented by $[(\cdot,\rho,0)]\in \E$ resp. by $[(z,\mu(v)(v, 1))] \in\E$, where $\mu(v):=\inf\{d\in \ZZ_{>0}\ |\ dv \in \NN \}$. This parametrization of the $G$-divisors on $X(\D)$ by the set $\Vert(\D)\coprod\Ray(\D)$ is the one requested.

\emph{Step 2:} We now consider an arbitrary simple $G$-model $X(\D,\F)$ of $Z$, and let 
$$\pi_{\dc}: X(\D,\emptyset)\rightarrow X(\D,\F)$$
be the decoloration morphism. By Proposition \ref{dec}, any $G$-divisor on $X(\D,\F)$ is the image of a (unique) $G$-divisor on $X(\D,\emptyset)$. 
Let us consider the $G$-divisor $D'_{\rho}$ on $X(\D,\emptyset)$ corresponding to the color datum $([(\cdot,\rho, 0)],\emptyset)$, where $\rho$ belongs to $\Ray(\D,\emptyset)$. Then $D_{\rho}=\pi_{\dc}(D'_{\rho})$ is the germ of $X(\D,\F)$ corresponding to the color datum $([(\cdot,\rho, 0)], \F_1)$. In this second step, we want to prove that $D'_{\rho}$ is contracted by $\pi_{\dc}$ if and only if $\F_1 \neq \emptyset$. 
   
Let $\rho^{\star} = \rho^{\perp}\cap\sigma^{\vee}\subset\sigma^{\vee}$ denote the dual face of $\rho$. By properness of $\D$ (see \cite[\S 2]{AH}) there exists a homogeneous element $f\chi^m\in A[C_0,\D]$ of degree $m$ belonging to the relative interior of $\rho^{\star}$ such that
$$\{z\in C_0\ |\ \Delta_z\neq \sigma\}\subset Z(f) = {\Supp}({\div} f + \D(m))\subset C_0.$$
The localization $\D_f^m$ of $\D$ with respect to $f\chi^m$ is the colored $\rho$-polyhedral divisor trivial on the curve $(C_0)_f^m = C_0\backslash Z(f)$ with set of colors $\F_1$; see Definition \ref{localization}. We denote by $X'$ the simple embedding of $G/H$ associated with the colored cone $(\rho,\F_1)$ (see \S  \ref{extraTim}).
By computing in an appropriate chart, one checks that the product $(C_0)_f^m\times X'$ identifies with $X(\D_f^m,\F_1)$ as a $G$-variety. We denote by $Gx$ the (unique) closed orbit of $X'$. Then $D''_{\rho}:= (C_0)_f^m\times Gx$ is a maximal germ (for the inclusion) of $X(\D_f^m,\F_1)$ and corresponds thus to the colored datum $(\rho,\F_1)$; see \cite[Th. 16.19]{Ti3} for details. The germ $D''_{\rho}$ is a geometric realization of $\mathcal{O}_{X(\D,\F),D_{\rho}}$ in the open subset $X(\D_f^m,\F_1)\subset X(\D,\F)$, that is, $D_{\rho}$ is the closure of $D''_{\rho}$ in $X(\D,\F)$. Therefore, $D_{\rho}$ is a divisor on $X(\D,\F)$ if and only if $G x$ is a divisor on $X'$ if and only if $\F_1 = \emptyset$; see \cite[Lemma 2.4]{Kn} for the last equivalence.

\emph{Step 3:} It remains to study the germs of the form $D_{(z,v)}\subsetneq X(\D,\F)$. Let us fix a vertex $(z,v)\in\Vert(\D)$, and let us write $\sigma^{\vee}$ as a union $$\sigma^{\vee} = \bigcup_{w\in\Delta_z(0)}\sigma^{\vee}_w,\text{  where  }  
\sigma^{\vee}_w = \left\{m\in\sigma^{\vee} \ \middle| \ \langle m, w\rangle  =\min_{v'\in\Delta_z(0)}\langle m, v'\rangle \right\}.$$
The cones $\sigma^{\vee}_w$ are pairwise distinct, generate a quasi-fan of $\sigma^{\vee}$ (see \cite[\S 1]{AH}) and are all of full dimension in $M_{\QQ}$. Let $m\in\sigma^{\vee}_{v}\cap M$ be a lattice vector in the relative interior of $\sigma^{\vee}_{v}$. By properness of $\D$, and up to a change of $m$ by a strictly positive integer multiple, we can choose a homogeneous element $f\chi^m\in A[C_0,\D]$ of degree $m$ such that $(C_0)_f^m = C_0\backslash Z(f)$ contains $z$. Then $\D_f^m$ is of the form  
$$\sum_{z'\in (C_0)^m_f}Q_{z'}\cdot [z'],$$ 
where $Q_{z'}\subset N_{\QQ}$ is a polytope for every $z'\in (C_0)_f^m$ and $Q_z = \{v\}$. The set of colors of $\D_f^m$ is empty. By Step $1$ applied to the open subset $X(\D_f^m,\emptyset)\subset X(\D,\F)$, the germ $D'_{(z,v)}\subsetneq X(\D_f^m,\emptyset)$ corresponding to the colored datum $([(z, \QQ_{\geq 0}(v,1))], \emptyset)$ is of codimension one. Thus $D_{(z,v)}$ is of codimension one as the closure of $D'_{(z,v)}$ in $X(\D,\F)$. This proves the existence of the parametrization of the $G$-divisors.
\end{proof}

\begin{corollary} \label{clgroup} 
The class group ${\Cl}(X(\ES))$ is isomorphic to the abelian group 
$${\rm Cl}(C_{\ES})\oplus \bigoplus_{(z,v)\in\Vert(\ES)}\ZZ D_{(z,v)}\oplus\bigoplus_{\rho\in \Ray(\ES)}\ZZ\,D_{\rho}
\oplus \bigoplus_{\alpha\in S\setminus I}\ZZ\,D_{\alpha},$$
where $D_{\alpha}\subset X(\ES)$ is the color associated with $\alpha \in S\setminus I$, modulo the relations: 
$$[z] = \sum_{(z,v)\in \Vert(\ES)} \mu(v)\, D_{(z,v)} \text{ and }$$
$$\sum_{(z,v)\in \Vert(\ES)}\mu(v)\langle m, v\rangle D_{(z,v)} + 
\sum_{\rho\in\Ray(\ES)}\langle m,\rho\rangle D_{\rho} +
\sum_{\alpha\in S\setminus I}\langle m, \varrho(D_{\alpha})\rangle D_{\alpha}=0,$$  
where $m\in M$, $z\in C_{\ES}$, and $\mu(v) = \inf\{d\in \ZZ_{>0}\ |\ dv \in \NN \}$. 
\end{corollary}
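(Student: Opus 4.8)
The plan is to derive the class group presentation directly from the parametrization of $G$-divisors established in Theorem \ref{theodiv}, combined with the well-known exact sequence relating the class group of a variety with an open subset to the divisors lying over the complement. First I would work on a simple $G$-model $X = X(\D,\F)$ and then glue, observing that $\Cl$ is a Zariski sheaf (it is determined by the behaviour on an open cover since $\Cl$ of an open subset is a quotient of $\Cl$ of the whole space with kernel generated by the classes of the divisors removed). Concretely, I would consider the $G$-stable open subset $U \subset X$ obtained by removing all the $G$-divisors; by the parametrization $U$ is exactly (the $G$-saturation of) $C_{\ES} \times Bx_0$, whose class group is $\Cl(C_{\ES})$ since $Bx_0$ is an affine space and the complement of the open $B$-orbit in $G/H$ is a union of colors $D_\alpha$. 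This yields a right-exact sequence
\begin{equation*}
\bigoplus_{(z,v)} \ZZ\, D_{(z,v)} \oplus \bigoplus_{\rho} \ZZ\, D_\rho \oplus \bigoplus_{\alpha \in S \setminus I} \ZZ\, D_\alpha \longrightarrow \Cl(X) \longrightarrow \Cl(C_{\ES}) \longrightarrow 0.
\end{equation*}

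Next I would split this sequence and identify the kernel of the first map. A class in $\Cl(X)$ is trivial precisely when the corresponding divisor is the divisor of a rational function $h \in k(Z)^\star$; since all the divisors in question are $B$-stable, it suffices to take $h$ to be a $B$-eigenvector, i.e.\ $h = f\chi^m$ with $f \in k(C)^\star$ and $m \in M$, together with functions pulled back from $C_{\ES}$ (which account for the first summand $\Cl(C_{\ES})$ and for the relation $[z] = \sum_{(z,v)} \mu(v) D_{(z,v)}$ coming from the divisor of a uniformizer at $z$). The heart of the computation is then to express $\div(f\chi^m)$ on $X$ in terms of the basis divisors. Using the valuative description of \S\ref{defE}: the valuation of $f\chi^m$ along $D_{(z,v)}$ is $\mu(v)(v(m) + \mu(v)^{-1}\ord_z(f)\cdot\ldots)$ — more precisely, identifying $D_{(z,v)}$ with the $G$-valuation $[(z,\mu(v)(v,1))]$ one gets $v_{D_{(z,v)}}(f\chi^m) = \mu(v)(\langle m, v\rangle + \mu(v)^{-1}\mu(v)\ord_z(f))$; along $D_\rho$ one gets $\langle m, \rho\rangle$ since $\rho$ corresponds to $[(\cdot,\rho,0)]$; and along $D_\alpha$ one gets $v_{D_\alpha}(f\chi^m) = \langle m, \varrho(D_\alpha)\rangle$ because $f$ is a constant on $G/H$ and the defining property of $\varrho$ in \S\ref{varrho} gives $v_{D_\alpha}(\chi^m) = \langle m, \varrho(D_\alpha)\rangle$. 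Summing and accounting separately for the $\ord_z(f)$ terms (which are absorbed into the relation $[z] = \sum \mu(v) D_{(z,v)}$, using $\div(f)$ on $C_{\ES}$), one obtains exactly the second family of relations. Conversely, every relation arises from such an $f\chi^m$, so the kernel is precisely the span of these relations.

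The main obstacle I expect is bookkeeping the interaction between the curve $C_{\ES}$ and the vertices: the relation $[z] = \sum_{(z,v)\in\Vert(\ES)} \mu(v) D_{(z,v)}$ says that the pullback of the point class $[z] \in \Cl(C_{\ES})$ decomposes along the fibre, and one must check carefully that after imposing this relation the $\ord_z(f)$ contributions to $\div(f\chi^m)$ are correctly cancelled, leaving only the $\langle m, v\rangle$, $\langle m, \rho\rangle$, $\langle m, \varrho(D_\alpha)\rangle$ terms. A secondary delicate point is the factor $\mu(v)$: the vertex $v \in \Delta_z$ need not be a lattice point, and $D_{(z,v)}$ corresponds to the \emph{primitive} $G$-valuation $[(z,\mu(v)(v,1))]$, which is why $\mu(v)$ appears in both relations; one must be consistent about whether coefficients are read off on $\E$ or in $N_\QQ \times \QQ$. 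I would also need to confirm that the gluing over the simple models in $\ES$ does not introduce extra relations — this follows because two simple models $X(\D^i)$ and $X(\D^j)$ overlap in $X(\D^l)$ with $\mathscr{C}(\D^l)$ a common hyperface, so the divisor data match on overlaps and the Mayer--Vietoris-type gluing for $\Cl$ produces no new kernel. Once these points are handled, the presentation in the statement follows by assembling the right-exact sequence with the explicit kernel.
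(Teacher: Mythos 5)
Your overall strategy is the paper's: take the $B$-stable prime divisors from Theorem \ref{theodiv} as generators, identify the relations with the divisors of $B$-eigenvectors $f\chi^m$, and compute those divisors through the valuations $v_{D_{(z,v)}}(f\chi^m)=\mu(v)(\langle m,v\rangle+\ord_z f)$, $v_{D_\rho}(f\chi^m)=\langle m,\rho\rangle$, $v_{D_\alpha}(f\chi^m)=\langle m,\varrho(D_\alpha)\rangle$. That computation is correct and is exactly the heart of the paper's argument.

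The scaffolding you build around it, however, does not work as stated. First, you cannot form the open set $U$ by ``removing all the $G$-divisors'': the set $\Vert(\ES)$ is infinite, since for every $z\in C_{\ES}$ with $\Delta_z=\sigma$ the pair $(z,0)$ is a vertex and $D_{(z,0)}$ is the closure of the fibre $\{z\}\times G/H$; the union of these divisors is dense in $X(\ES)$, so their complement is not a (nonempty) open subset and no excision sequence is available. Second, the proposed sequence
$\bigoplus\ZZ D_{(z,v)}\oplus\bigoplus\ZZ D_\rho\oplus\bigoplus\ZZ D_\alpha\to\Cl(X)\to\Cl(C_{\ES})\to 0$
cannot be exact: by \cite[Prop. 17.1]{Ti3} every Weil divisor on $X(\ES)$ is linearly equivalent to a $B$-stable one, so the first map is already surjective, and exactness would force $\Cl(C_{\ES})=0$, which fails for instance when $C_{\ES}=C$ is projective of positive genus. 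In the correct presentation $\Cl(C_{\ES})$ is not a cokernel (there is no natural surjection $\Cl(X)\to\Cl(C_{\ES})$ to ``split''); it is adjoined as a redundant block of generators via the homomorphism $[z]\mapsto\sum_{(z,v)\in\Vert(\ES)}\mu(v)D_{(z,v)}$, which is well defined on classes precisely because $\div(f\chi^0)=\sum\mu(v)\ord_z(f)\,D_{(z,v)}$ lies among your relations. The fix is therefore to drop the excision step entirely: start from the surjection of the free group on the $B$-divisors onto $\Cl(X(\ES))$ (citing \cite[Prop. 17.1]{Ti3} together with Theorem \ref{theodiv}), identify its kernel with $\{\div(f\chi^m)\}$, and then check that the map above from $\Cl(C_{\ES})$ plus the free group onto this quotient has kernel generated exactly by the two families of relations. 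With that repair, the rest of your computation goes through and coincides with the paper's proof.
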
  

\begin{proof}
By \cite[Prop. 17.1]{Ti3}, every divisor $X(\ES)$ is linearly equivalent to a $B$-stable divisor. Hence, by Theorem \ref{theodiv}, we have a surjective homomorphism $\pi_{\ES}$ from the free abelian group
\begin{equation} \label{GammaES}
\Phi_{\ES}:=\bigoplus_{(z,v)\in\Vert(\ES)}\ZZ D_{(z,v)}\oplus\bigoplus_{\rho\in \Ray(\ES)}\ZZ\,D_{\rho}
\oplus \bigoplus_{\alpha\in S\setminus I}\ZZ\,D_{\alpha}
\end{equation}
onto ${\Cl}(X(\ES))$. 
The kernel of $\pi_{\ES}$ is formed by principal divisors associated with the $B$-eigenvectors of $k(X(\ES))$, i.e., by elements of the form ${\div}(f\chi^m):$  
$$\sum_{(z,v)\in\Vert(\ES)}v_{D_{(z,v)}}(f\chi^m)\, \cdot \, D_{(z,v)} + 
\sum_{\rho\in\Ray(\ES)}v_{D_{\rho}}(f\chi^m)\, \cdot \, D_{\rho} + \sum_{\alpha\in S\setminus I}v_{D_{\alpha}}(f\chi^m)\, \cdot \, D_{\alpha}$$
$$ = \sum_{(z,v)\in\Vert(\ES)}\mu(v)(\langle m, v\rangle + {\ord}_z f)D_{(z,v)}+  
\sum_{\rho\in\Ray(\ES)}\langle m, \rho\rangle D_{\rho} + 
\sum_{\alpha\in S\setminus I}\langle m, \varrho(D_{\alpha})\rangle D_{\alpha},$$
where $f\in k(C)^{\star}$ and $m\in M$. Let us now consider the surjective homomorphism from
$$\Cl(C_{\ES})\oplus\bigoplus_{(z,v)\in\Vert(\ES)}\ZZ\, D_{(z,v)}\oplus\bigoplus_{\rho\in \Ray(\ES)}\ZZ\,D_{\rho}
\oplus \bigoplus_{\alpha\in S\setminus I}\ZZ\,D_{\alpha}$$
onto $\Phi_{\ES}/\,\Ker\,\pi_{\ES}$ defined by
$$\sum_{z\in C_{\ES}}a_{z}\cdot [z] + \sum_{i}a_{i}D_{i}\mapsto \sum_{z\in C_{\ES}} a_z \left(\sum_{(z,v)\in \Vert(\ES)}\mu(v)[D_{(z,v)}] \right)
+\sum_{i}a_{i}[D_{i}],$$
where the $D_{i}$ represent the $B$-divisors on $X(\ES)$. Then one may check that the kernel of this homomorphism is exactly given by the relations stated above.
\end{proof}

\begin{example}
Returning to the example of the $\SL_3$-variety 
$$ X(\D)=\{xz-y^2=0\} \cap (\PP(-1,-1,-1,0,-1,-2) \; \backslash \; \PP(0,-1,-2))$$
considered in \S \ref{subsec22}, we can apply Corollary \ref{clgroup} to determine the class group of $X(\D)$. 
We obtain that $\Cl(X(\D))$ is the abelian group $\ZZ D_{(0,\frac{1}{2})} \oplus \ZZ D_\rho \oplus \ZZ D_\alpha$, where $\rho=\QQ_{\geq 0}$ and $D_\alpha$ is the unique color of $X(\D)$, modulo the following relations:
\begin{itemize}
\item $2 D_{(0,\frac{1}{2})}=0$; and
\item $m D_\rho+ 2 m D_\alpha=0$ for every $m \in \ZZ$.  
\end{itemize}
It follows that $\Cl(X(\D)) \cong \ZZ \oplus \ZZ/2 \ZZ$.
\end{example}

\subsubsection{}  \label{notY}
Let $Y(\ES)$ be the $K$-variety defined at the beginning of \S \ref{subsec22}. We denote by $\rho\mapsto \Gamma_{\rho}$,
$(z,v)\mapsto \Gamma_{(z,v)}$ the parametrization of the $K$-divisors on $Y(\ES)$ given by Theorem \ref{theodiv}. 
We recall that a normal variety is called \emph{factorial} if its class group is trivial.  
The next corollary gives a criterion of factoriality for the $G$-variety $X(\D)$. 

\begin{corollary} \label{cordiv}  
Let $(\D,\F)$ be a proper colored $\sigma$-polyhedral divisor on a dense open subset $C_0\subset C$. 
Then $X(\D)$ is factorial if and only if the two following conditions are satisfied.
\begin{enumerate}[(i)]
\item The equality ${\Cl}(Y(\D)) = \sum_{\Gamma_{\rho}\subset \Gamma}\mathbb{Z}[\Gamma_{\rho}]$ holds, where $\Gamma$ denotes the union of the $K$-divisors $\Gamma_{\rho}$ with $\rho$ satisfying $\varrho(\F)\cap \rho\neq \emptyset$.
\item For every $\alpha\in S\setminus I$, there exists $m_{\alpha}\in M$ and $f_\alpha\in k(C)^{\star}$ such that 
$$\langle m_{\alpha}, \varrho(D_{\alpha})\rangle  = 1,$$
and for all $\beta\in S\setminus (I\cup \{\alpha\})$, $(z,v)\in\Vert(\D)$, and $\rho\in\Ray(\D,\F)$:
$$\langle m_{\alpha}, \varrho(D_{\beta})\rangle  = \mu(v)(\langle m_{\alpha},v\rangle + {\ord}_z(f_{\alpha}))  = \langle m_{\alpha}, \rho\rangle  = 0.$$
\end{enumerate}
\end{corollary}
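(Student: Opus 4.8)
The strategy is to reduce factoriality of $X(\D)$ to the vanishing of the class group presentation given by Corollary \ref{clgroup}, and then to interpret the resulting conditions geometrically. By Corollary \ref{clgroup}, $\Cl(X(\D)) = 0$ if and only if the natural surjection
$$\Cl(C_0) \oplus \bigoplus_{(z,v)\in\Vert(\D)}\ZZ D_{(z,v)} \oplus \bigoplus_{\rho\in\Ray(\D,\F)}\ZZ D_\rho \oplus \bigoplus_{\alpha\in S\setminus I}\ZZ D_\alpha \twoheadrightarrow \Cl(X(\D))$$
has all generators killed by the relations $[z]=\sum_{(z,v)}\mu(v)D_{(z,v)}$ and $\div(f\chi^m)=0$. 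First I would split the generators into two families: those coming from the ``$K$-variety part'' (namely the $D_{(z,v)}$, the $D_\rho$, and $\Cl(C_0)$), and the colors $D_\alpha$ for $\alpha\in S\setminus I$. The key observation, via Proposition \ref{dec}, is that $X(\D_{\dc}) \cong G/H\times^K Y(\D)$, so the subgroup of $\Cl(X(\D_{\dc}))$ generated by the first family is exactly (the image of) $\Cl(Y(\D))$; and the relations among the $D_{(z,v)}$, $D_\rho$ coming from $B$-eigenvectors $f\chi^m$ with $m$ orthogonal to nothing in particular are precisely the principal-divisor relations on $Y(\D)$ obtained from \cite[Prop. 3.13]{PS}.

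\smallskip

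The next step is to disentangle the role of the colors. Consider the decoloration morphism $\pi_{\dc}:X(\D_{\dc})\to X(\D)$. By Proposition \ref{dec} and Step 2 of the proof of Theorem \ref{theodiv}, $\pi_{\dc}$ contracts exactly the $G$-divisors $D_\rho'$ with $\rho\cap\varrho(\F)\neq\emptyset$, and the color $D_\alpha$ on $X(\D)$ is the image under $\pi_{\dc}$ of the color $D_\alpha$ on $X(\D_{\dc})$ (which there is a $G$-divisor since $\F=\emptyset$ on the decoloration). Pulling back Weil divisors and using that $\pi_{\dc}$ is proper birational, one gets an exact sequence relating $\Cl(X(\D))$ and $\Cl(X(\D_{\dc}))$ whose ``error term'' is generated by the classes of the contracted divisors $D_\rho'$ ($\rho\cap\varrho(\F)\neq\emptyset$), i.e.\ the classes $[\Gamma_\rho]$ with $\rho$ meeting $\varrho(\F)$ — this is where condition $(i)$ enters: it says precisely that $\Cl(Y(\D))$ is already generated by those $[\Gamma_\rho]$, equivalently that the decoloration does not change the ``$K$-part'' of the class group modulo the contracted classes. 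Concretely, in the presentation of Corollary \ref{clgroup}, killing the $D_\rho$ with $\rho\cap\varrho(\F)\neq\emptyset$ is automatic (they disappear as they are not in $\Ray(\D,\F)$), and what remains of the $K$-variety generators is $\Cl(Y(\D))/\langle[\Gamma_\rho] : \rho\cap\varrho(\F)\neq\emptyset\rangle$, which vanishes iff $(i)$ holds.

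\smallskip

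Finally I would handle the colors $D_\alpha$. After imposing $(i)$, what is left in the presentation is $\bigoplus_{\alpha\in S\setminus I}\ZZ D_\alpha$ modulo the images of the relations $\div(f\chi^m)$; writing out such a relation as in the proof of Corollary \ref{clgroup}, its $D_\alpha$-component is $\langle m,\varrho(D_\alpha)\rangle$, its $D_\beta$-component is $\langle m,\varrho(D_\beta)\rangle$ for $\beta\neq\alpha$, its $D_{(z,v)}$-component is $\mu(v)(\langle m,v\rangle+\ord_z f)$, and its $D_\rho$-component is $\langle m,\rho\rangle$. The quotient is trivial iff for each $\alpha$ there is a relation whose $D_\alpha$-coefficient is $1$ and all of whose other coefficients vanish in the already-quotiented group — but after $(i)$ the $D_{(z,v)}$ and $D_\rho$ classes are not independent; nevertheless, the cleanest sufficient-and-necessary way to phrase ``the images of the relations generate a direct complement to $\bigoplus\ZZ D_\alpha$'' turns out to be exactly condition $(ii)$: the existence of $m_\alpha\in M$ and $f_\alpha\in k(C)^\star$ with $\langle m_\alpha,\varrho(D_\alpha)\rangle=1$ and $\langle m_\alpha,\varrho(D_\beta)\rangle = \mu(v)(\langle m_\alpha,v\rangle+\ord_z f_\alpha)=\langle m_\alpha,\rho\rangle=0$ for all $\beta\neq\alpha$, $(z,v)\in\Vert(\D)$, $\rho\in\Ray(\D,\F)$. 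I would prove the equivalence by a straightforward linear-algebra / diagram-chase argument: conditions $(i)$ and $(ii)$ together say that every generator in the presentation of $\Cl(X(\D))$ is a $\ZZ$-combination of relations, hence $\Cl(X(\D))=0$; conversely, if $\Cl(X(\D))=0$, restricting to the decolored variety gives $(i)$ (the $K$-part must already be killed by contracted classes), and then each $[D_\alpha]=0$ forces a relation realizing exactly the data of $(ii)$ (after using $(i)$ to absorb the $\Gamma_\rho$-contributions, we may take the combination to have vanishing $D_{(z,v)}$, $D_\rho$, $D_\beta$ components). The main obstacle I anticipate is the bookkeeping in this last step: making precise how the relations descend through the decoloration morphism and isolating the pure color contribution, i.e.\ verifying that condition $(i)$ really does allow one to normalize an arbitrary trivializing relation for $[D_\alpha]$ so that only the $D_\alpha$-coefficient survives — this requires care because the $[\Gamma_\rho]$ with $\rho\cap\varrho(\F)\neq\emptyset$ may themselves be nontrivial in $\Cl(Y(\D))$ and one must track their interaction with the $f\chi^m$ relations.
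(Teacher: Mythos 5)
Your proposal is correct and follows essentially the same route as the paper: both arguments rest on the decoloration morphism and the twisted-product description of Proposition \ref{dec} to produce the exact sequence $\bigoplus_{\alpha\in S\setminus I}\ZZ D_{\alpha}\rightarrow {\Cl}(X(\D))\rightarrow {\Cl}(Y(\D)\setminus\Gamma)\rightarrow 0$, with condition $(i)$ expressing the vanishing of the right-hand term and condition $(ii)$ the vanishing of each $[D_{\alpha}]$. The ``main obstacle'' you anticipate at the end is in fact vacuous --- since the kernel of the surjection $\Phi_{\ES}\twoheadrightarrow{\Cl}(X(\D))$ is exactly the set of divisors ${\div}(f\chi^{m})$ of $B$-eigenvectors (proof of Corollary \ref{clgroup}), the equation $[D_{\alpha}]=0$ forces $D_{\alpha}={\div}(f_{\alpha}\chi^{m_{\alpha}})$ in the free group $\Phi_{\ES}$, and comparing coefficients gives $(ii)$ verbatim with no normalization via $(i)$ needed --- and your parenthetical claim that the colors become $G$-divisors on the decolored model is false (they remain $B$-stable but not $G$-stable) though harmless to the argument.
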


\begin{proof}
The decoloration morphism induces an isomorphism of varieties 
$$X(\D,\F) \backslash  \Gamma_0\simeq X(\D,\emptyset)\backslash \pi_{\dc}^{-1}(\Gamma_0),$$ 
where $\Gamma_0$ is the $G$-stable closed subset 
$$\left\{x\in X(\D,\F) \ \middle| \ Gx\subset \bigcup_{\alpha\in S\setminus I}D_{\alpha}\right\}.$$ 
As the codimension of $\Gamma_0$ in $X(\D,\F)$ is at least two, if $\Gamma'\subset X(\D,\emptyset)$ is the union of irreducible components of codimension one of $\pi_{\dc}^{-1}(\Gamma_0)$, then we have a group isomorphism 
$${\Cl}(X(\D,\F))\simeq {\Cl}\left(X(\D,\emptyset)\backslash \Gamma'\right).$$
By Proposition \ref{dec}, we can identify $X(\D,\emptyset)$ with $G/H\times^{K}Y(\D)$ and $\Gamma'$ with $G/H\times^{K}\Gamma$. 
Let us consider the chart $X_0 = q^{-1}(B\bar{x}_0)$, where $B\bar{x}_0$ is the open $B$-orbit of $G/P$ and 
$$q:G/H\times^{K}Y(\D) = G\times^{P}Y(\D)\rightarrow G/P$$
is the projection. The complement of the union of colors
$\bigcup_{\alpha\in S\setminus I} D_{\alpha}$ in $X(\D,\emptyset)\backslash \Gamma'$ is exactly
$$X_0\backslash (X_0 \cap \Gamma') \simeq B\bar{x}_0\times (Y(\D)\backslash \Gamma).$$
As $B\bar{x}_0$ is an affine space, by reiterating several times \cite[Prop. II.6.6]{Har}, we obtain
$${\Cl}\left(X_0\backslash \Gamma'\right)\simeq {\Cl}\left(Y(\D)\backslash \Gamma\right).$$ 
To sum up, we have an exact sequence:
$$\bigoplus_{\alpha\in S\setminus I}\mathbb{Z} D_{\alpha}\rightarrow  {\Cl}\left(X(\D,\emptyset)\backslash \Gamma'\right)\rightarrow {\Cl}\left(Y(\D)\backslash \Gamma\right)
\rightarrow 0,$$  
where the first arrow is induced by the surjective homomorphism from the group $\Phi_{\ES}$ defined by \eqref{GammaES} onto $\Cl(X(\D))$. Therefore ${\Cl}(X(\D)) = 0$ if and only if ${\Cl}\left(Y(\D)\backslash \Gamma\right) = 0$ and the first arrow above is zero. By Corollary \ref{clgroup}, this corresponds exactly to the conditions $(i)$ and $(ii)$. This proves the Corollary.  
\end{proof}

\begin{remark}
In general, the factoriality of $Y(\D)$ does not imply the one of $G\times^{P} Y(\D)$. 
For instance, we can consider $G = \SL_{2}$, $H$ the unipotent subgroup of upper triangular matrices, $C = \PP^{1}$, and $(\D,\emptyset)$ the colored $\QQ_{\geq 0}$-polyhedral divisor trivial on $\AA^{1}\subset \PP^{1}$. 
The $\SL_{2}$-variety $X(\D)$ identifies with $\AA^{1}\times {\Bl}_0(\AA^{2})$, where ${\Bl}_0(\AA^{2})$ is the bowing-up of $\AA^{2}$ at the origin.
We have ${\Cl}(X(\D))\simeq \mathbb{Z}$ whereas ${\Cl}(Y(\D)) = {\Cl}(\AA^{2}) = 0$.
\end{remark}

\subsubsection{} \label{Cartier}
As a by-product of Theorem \ref{theodiv}, we can refine the description of $B$-stable Cartier divisors of \cite{Tim00} to our setting.  
Let us start with a definition. 

\begin{definition} \label{deftheta}
Let $\ES = \{(\D^i,\F^i)\}_{i\in J}$ be a colored divisorial fan on $C$, and let $\D = \D^i\in\ES$ be a colored polyhedral divisor on $C_0$ with set of colors $\F=\F^i$. 
Recall that we denote by $\mathscr{C}(\D)$ the hypercone associated with $\D$ (see Remark \ref{link}). An \emph{integral linear function} on $\D$ is a map
$$\theta: \mathscr{C}(\D)\rightarrow \QQ$$
satisfying the following properties:
\begin{itemize}
\item[(i)] For every $z\in C_0$, there exists $m_z\in M$ and $\gamma_z\in\ZZ$ such that $\theta(z,u,l) = u(m_z) + l\gamma_z,$ for every $(u,l)\in \mathscr{C}(\D)_z$.
\end{itemize}
If $C_0=C$, then $\theta$ must satisfy the extra condition:
\begin{itemize}
\item[(ii)] We have $m:=m_z = m_{z'}$, for every $z,z'\in C$, and there exists $f\in k(C)^{\star}$ such that
$${\div} f = \sum_{z\in C}\gamma_z\cdot [z].$$ 
\end{itemize}
Let us denote by $\F_{\ES}$ the union of all the sets $\F^i$, where the $(\D^i,\F^i)$ run over $\ES$.
A \emph{colored integral piecewise linear function} on $\ES$ is a pair $(\theta, (r_{\alpha}))$, where $\theta$ is a function 
$$\theta :|\ES| \left(= \bigcup_{i \in J} \mathscr{C}(\D^i) \right) \rightarrow \QQ$$
such that the restriction $\theta_{|\mathscr{C}(\D^i)\cap\mathscr{C}(\D^{j})}$ is integral linear for every $i,j\in J$, and where $(r_{\alpha})$ is a sequence of integers with $\alpha$ running over the set of roots $\alpha\in S\setminus I$ such that $D_{\alpha}\not\in \F_{\ES}$. 

The pair $(\theta, (r_{\alpha}))$ is called \emph{principal} if $\theta$ satisfies (ii) and $r_{\alpha} = \langle m, \varrho(D_{\alpha})\rangle$. We denote respectively by $\PL(\ES)$ and $\Prin(\ES)$ the abelian groups (for the natural additive law) of colored integral piecewise linear functions of $\ES$ and of principal colored integral piecewise linear functions of $\ES$. If $\ES$ has a single element $\D$, then we will denote $\PL(\D)$ and $\Prin(\D)$ instead of $\PL(\ES)$ and $\Prin(\ES)$ respectively.

\end{definition}

As a direct consequence of \cite{Kno94}, \cite[\S 4]{Tim00}, \cite[\S 17]{Ti3}, and Theorem \ref{theodiv}, we obtain the next result.
See \cite[\S 3.1]{Bri89} for the spherical case and \cite[Cor. 3.19]{PS} for the case of normal $T$-varieties.

\begin{corollary} \label{corcartier}
With the notation above, if $(\theta, (r_{\alpha})) \in\PL(\ES)$, then
$$D_{\theta}: = \sum_{(z,v)\in\Vert(\ES)}\theta(z,\mu(v)(v,1))\cdot D_{(z,v)} +  
\sum_{\rho\in\Ray(\ES)}\theta(\cdot,\rho,0)\cdot D_{\rho}$$
$$ + \sum_{D_{\alpha}\in\F_{\ES}}\theta(\cdot,\varrho(D_{\alpha}),0)\cdot D_{\alpha}
+ \sum_{D_{\alpha}\not\in\F_{\ES}}r_{\alpha}\cdot D_{\alpha}$$
is a $B$-stable Cartier divisor on $X(\ES)$. More precisely, the map $\theta\mapsto D_{\theta}$ is an isomorphism between the group $\PL(\ES)$ and the group of $B$-stable Cartier divisors on $X(\ES)$, and there is a short exact sequence:
$$0\rightarrow \Prin(\ES)\rightarrow \PL(\ES)\rightarrow {\Pic}(X(\ES))\rightarrow 0,$$
where ${\Pic}(X(\ES))$ is the Picard group of $X(\ES)$.
\end{corollary}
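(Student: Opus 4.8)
The plan is to reduce the statement to three inputs: (a) Timashev's general description of $B$-stable Cartier divisors on a normal $G$-variety of complexity one via integral piecewise linear functions on the underlying hyperfans, as developed in \cite[\S 4]{Tim00} and \cite[\S 17]{Ti3}; (b) Knop's results on the Picard group of complexity-one varieties \cite{Kno94}; and (c) the explicit parametrization of $G$-divisors of $X(\ES)$ established in Theorem \ref{theodiv}, together with the formulas for $v_{D_{(z,v)}}$ and $v_{D_\rho}$ recalled in the proof of Corollary \ref{clgroup}. The key point is merely a translation of coordinates: Timashev describes a $B$-stable Cartier divisor by prescribing, on each germ, the local equation of a $B$-eigenfunction $f\chi^m$, which amounts to an integral linear function on the corresponding cone in the hyperfan; our task is to record what integer this function assigns to each $G$-divisor, i.e.\ to each vertex $(z,v)$, each extremal ray $\rho$, and each color $D_\alpha$.

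First I would fix a colored polyhedral divisor $(\D,\F) = (\D^i,\F^i)\in\ES$ and spell out the dictionary. A $B$-stable Cartier divisor $\delta$ on $X(\D)$ is, locally on each germ $\Gamma$ with colored datum $(\Cc_1,\F_1)$, the divisor of a $B$-eigenfunction $f_\Gamma\chi^{m_\Gamma}$; the compatibility on overlaps of germs forces the data $\{(m_z,\gamma_z)\}$ to glue into precisely an integral linear function $\theta$ on $\mathscr{C}(\D)$ in the sense of Definition \ref{deftheta}(i), and in the projective case $C_0=C$ the single-$f$ condition is exactly Definition \ref{deftheta}(ii); this is the content of \cite[\S 17]{Ti3}. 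The coefficient of $\delta$ along a $G$-divisor $D$ with associated $G$-valuation $v_D$ is then $v_D(f_\Gamma\chi^{m_\Gamma})$ for any germ $\Gamma$ whose closure contains $D$. Plugging in the valuations computed in the proof of Corollary \ref{clgroup}, namely $v_{D_{(z,v)}}(f\chi^m)=\mu(v)(\langle m,v\rangle+\ord_z f)=\theta(z,\mu(v)(v,1))$ and $v_{D_\rho}(f\chi^m)=\langle m,\rho\rangle=\theta(\cdot,\rho,0)$, yields the first two sums in the formula for $D_\theta$. For a color $D_\alpha\in\F_{\ES}$, the value $v_{D_\alpha}(f\chi^m)=\langle m,\varrho(D_\alpha)\rangle=\theta(\cdot,\varrho(D_\alpha),0)$ gives the third sum; for a color $D_\alpha\notin\F_{\ES}$, no germ's local equation constrains the coefficient along $D_\alpha$ (since $D_\alpha$ is not $G$-stable and lies outside the combinatorial locus encoded by $\mathscr{C}(\D)$), so its coefficient is a free integer, recorded as $r_\alpha$. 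This proves that $D_\theta$ is Cartier and $B$-stable, and that $\theta\mapsto D_\theta$ is well-defined.

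For injectivity and surjectivity of $\theta\mapsto D_\theta$, I would argue as follows. Surjectivity: by \cite[Prop. 17.1]{Ti3} every Cartier divisor is linearly equivalent to a $B$-stable one, and by Theorem \ref{theodiv} every $B$-stable Weil divisor is a $\ZZ$-combination of the $D_{(z,v)}$, $D_\rho$ and $D_\alpha$; the Cartier condition on such a divisor is, by \cite[\S 4]{Tim00}, precisely the existence of compatible local eigenfunctions, i.e.\ of a $\theta\in\PL(\ES)$ with the prescribed $r_\alpha$. Injectivity is immediate since $\theta$ is determined by its values on the primitive generators of all the rays and on the lattice points $\mu(v)(v,1)$, which are exactly the coefficients of $D_\theta$ along $G$-divisors; two distinct $\theta$ thus give distinct $D_\theta$. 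Finally, for the short exact sequence, a pair $(\theta,(r_\alpha))$ is principal iff $D_\theta$ is linearly equivalent to zero via a $B$-eigenfunction $f\chi^m$: condition (ii) says the polyhedral part comes from $\div(f\chi^m)$ on the $G$-stable divisors, and $r_\alpha=\langle m,\varrho(D_\alpha)\rangle$ says the color part matches; this is exactly the relations in Corollary \ref{clgroup} restricted to $B$-principal divisors. Hence $\ker(\theta\mapsto [D_\theta])=\Prin(\ES)$ inside $\PL(\ES)$, and the cokernel is $\Pic(X(\ES))$ by \cite[Prop. 17.1]{Ti3} and \cite{Kno94}, giving the exact sequence.

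The main obstacle I anticipate is not any single computation but the bookkeeping at the overlaps: one must verify that the local data $(m_z,\gamma_z)$ attached to the various germs of a chart are forced to assemble into a \emph{globally} integral linear function on $\mathscr{C}(\D)$ (and, when $C_0=C$, that the $m_z$ are all equal and the $\gamma_z$ come from a single rational function), rather than merely into a piecewise linear one on each maximal cone separately. This is where the hyperfan formalism of \cite[\S 16--17]{Ti3} and the properness of $\D$ (controlling the finitely many non-trivial coefficients $\Delta_z\neq\sigma$) are essential; once that rigidity is in place, the identification of $D_\theta$'s coefficients with the valuations from Corollary \ref{clgroup} and the description of $\Prin(\ES)$ are routine.
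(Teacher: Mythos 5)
Your proposal is correct and follows exactly the route the paper takes: the paper's own "proof" is a one-line deduction from \cite{Kno94}, \cite[\S 4]{Tim00}, \cite[\S 17]{Ti3}, and Theorem \ref{theodiv}, and your argument is precisely a detailed unpacking of that citation (translating Timashev's local $B$-eigenfunction description into the vertex/ray/color coordinates via the valuation formulas from Corollary \ref{clgroup}). The only nitpick is the phrase ``germ $\Gamma$ whose closure contains $D$,'' which should read ``germ $\Gamma$ contained in $D$''; this does not affect the argument.
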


\subsection{Canonical class and log-terminal singularities}  \label{subsec24}
In this section, we give an explicit representative of the canonical class for $X$ a normal horospherical $G$-variety of complexity one; see Theorem \ref{theocan}. From this, we deduce a criterion for $X$ to be $\QQ$-Gorenstein; see Corollary \ref{Goren}. Then we construct an explicit resolution of singularities of $X$; see Proposition \ref{res}. Finally, we obtain a criterion to determine whether the singularities of $X$ are log-terminal; see Theorem \ref{theolog}.

\subsubsection{}
The next result gives an explicit canonical divisor for any normal horospherical $G$-variety of complexity one; see \cite[\S 5]{Mos1} for the case of the embeddings of $\SL_2$, \cite{Bri93} for the case of the spherical varieties, and \cite[Cor. 4.25]{FZ}, \cite[Th. 3.21]{PS} for the case of normal $T$-varieties.  

\begin{theorem} \label{theocan}
Let $\ES$ be a colored divisorial fan on $C$. Then with the notation of \S \ref{RayVert} every canonical divisor on $X=X(\ES)$ is linearly equivalent to 
$$K_{X} = -\sum_{\rho\in\Ray(\ES)}D_{\rho}\;+ \sum_{(z,v)\in \Vert(\ES)}(\mu(v)b_z + \mu(v) -1)D_{(z,v)}
\;-\sum_{\alpha\in S\setminus I}a_{\alpha}D_{\alpha},$$ 
where $K_{C} = \sum_{z\in C}b_z\cdot [z]$ is a canonical divisor on $C$, $a_{\alpha} = \langle \sum_{\beta\in\Phi^{+}\setminus \Phi_{I}}\beta, \hat{\alpha}\rangle\geq 2$, $\Phi^+$ is the set of positive roots with respect to $(T,B)$, and $\Phi_I$ is the set of roots that are sums of elements of $I$.
\end{theorem}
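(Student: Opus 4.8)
The plan is to reduce the computation of $K_X$ to an adjunction-type computation on the quasi-toroidal model $X(\ES_{\dc})$ and then transport the answer back via the decoloration morphism $\pi_{\dc}$. First I would observe that, since the class group is generated by $B$-stable prime divisors (by \cite[Prop. 17.1]{Ti3}) and these are parametrized as in Theorem \ref{theodiv} (the $D_{(z,v)}$, the $D_\rho$, and the colors $D_\alpha$), it suffices to write $K_X \sim \sum a_{(z,v)} D_{(z,v)} + \sum c_\rho D_\rho + \sum d_\alpha D_\alpha$ and determine the coefficients. To do this I would exhibit a $G$-semi-invariant rational top-degree differential form $\omega$ on $Z = C \times G/H$ and compute its divisor. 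The natural candidate is $\omega = \omega_C \boxtimes \omega_{G/H}$, where $\omega_C$ is a rational volume form on $C$ (so $\div(\omega_C) = K_C = \sum_z b_z[z]$) and $\omega_{G/H}$ is a $B$-semi-invariant volume form on the open $B$-orbit $Bx_0 \subset G/H$; then $\div(\omega)$ is a $B$-stable divisor representing $K_X$, and the work is to read off the multiplicity of $\omega$ along each $B$-stable prime divisor.

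The computation naturally splits into three cases matching the three types of divisors. For the colors $D_\alpha$ ($\alpha \in S \setminus I$): this is the spherical part of the argument, essentially the same computation as in Brion's work \cite{Bri93}; the multiplicity of a $B$-semi-invariant volume form on $G/H$ along $D_\alpha = \pi^{-1}(E_\alpha)$ is computed from the pullback of the canonical class of the flag variety $G/P$, and one gets $-a_\alpha$ with $a_\alpha = \langle \sum_{\beta \in \Phi^+ \setminus \Phi_I} \beta, \hat\alpha\rangle$; the inequality $a_\alpha \geq 2$ holds because $\sum_{\beta \in \Phi^+\setminus\Phi_I}\beta$ pairs nonnegatively with every $\hat\alpha$, $\alpha\notin I$, and strictly (it is $2\rho$ minus a sum of roots orthogonal to $\hat\alpha$, or one argues via $|W_{I\cup\{\alpha\}}|/|W_I|$). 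For the rays $D_\rho$: these appear already on the quasi-toroidal model, and by Proposition \ref{dec} and the twisted-product description $X(\ES_{\dc}) \cong G/H \times^K Y(\ES)$, the coefficient of $D_\rho$ in $K_X$ equals the coefficient of the corresponding $K$-divisor $\Gamma_\rho$ in $K_{Y(\ES)}$ — and by the canonical-class formula for normal $T$-varieties of complexity one (\cite[Cor. 4.25]{FZ}, \cite[Th. 3.21]{PS}) this coefficient is $-1$. For the vertex divisors $D_{(z,v)}$: again via $Y(\ES)$ and the complexity-one torus formula, the coefficient is $\mu(v) b_z + \mu(v) - 1$, where the $\mu(v) b_z$ term comes from pulling back $K_C$ (the ramification/multiplicity factor $\mu(v)$ is exactly the one appearing in the relation $[z] = \sum \mu(v) D_{(z,v)}$ of Corollary \ref{clgroup}), and the extra $\mu(v) - 1$ is the "discrepancy-type" contribution already present in the toric/$T$-variety adjunction formula.

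The main obstacle, and the step requiring the most care, is tracking the coefficients of $D_{(z,v)}$ and $D_\rho$ through the decoloration morphism $\pi_{\dc}: X(\ES_{\dc}) \to X(\ES)$: I need to know that $\pi_{\dc}$ is crepant with respect to these $G$-stable divisors, i.e., that it does not change their coefficients in the canonical class, while the colors $D_\alpha$ of $X(\ES_{\dc})$ (over which $\pi_{\dc}$ is an isomorphism onto an open set in the parameter $C$-direction but may contract in the $G/P$-direction) contribute the separate spherical term. This is handled by the fact, established in the proof of Proposition \ref{dec}, that $\mathcal{O}_{X(\ES_{\dc}),\Gamma} = \mathcal{O}_{X(\ES),\pi_{\dc}(\Gamma)}$ for $G$-stable divisors $\Gamma$ not contracted by $\pi_{\dc}$ (their supports in $\E$ coincide), so the valuations $v_{D_{(z,v)}}$ and $v_{D_\rho}$ are literally the same on both models; combined with the representation of these valuations as explicit elements $[(z,\mu(v)(v,1))]$ and $[(\cdot,\rho,0)]$ of $\E$ from Theorem \ref{theodiv}, evaluating $v_{D_{(z,v)}}$ and $v_{D_\rho}$ on $\omega$ reduces to the already-cited $T$-variety formula applied to $Y(\ES)$. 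Finally I would check $B$-semi-invariance of $\omega$ and that $\div(\omega)$ has no component outside the listed divisors (the open $B$-orbit in each chart is an affine space times a torus, so a volume form there has empty divisor), and then re-express $\div(\omega)$ in the stated form, using $K_C = \sum_z b_z[z]$ and the relation $[z] = \sum_{(z,v)}\mu(v)D_{(z,v)}$ to absorb the $K_C$-pullback into the $\mu(v)b_z$ terms.
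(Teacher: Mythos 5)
Your proposal is correct and follows essentially the same route as the paper: both reduce via the decoloration morphism (and removal of a codimension-two $G$-stable locus) to the smooth quasi-toroidal twisted product $G\times^{P}Y(\ES)$, extract the color coefficients $-a_{\alpha}$ from the canonical class of $G/P$, and read off the coefficients of $D_{\rho}$ and $D_{(z,v)}$ from the complexity-one $T$-variety formula of Petersen--Suess applied to $Y(\ES)$. The only difference is presentational: you phrase the fibration step as computing the divisor of a $B$-semi-invariant top form $\omega_C\boxtimes\omega_{G/H}$, whereas the paper invokes the adjunction isomorphism $\mathcal{O}_{X}(K_{X})\simeq\mathcal{O}_{X}(D)\otimes q^{*}\mathcal{O}_{G/P}(K_{G/P})$ adapted from Strauch--Tschinkel.
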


\begin{proof}
Let us consider the union 
$$\Gamma' = \Gamma_0\cup {\Sing}(X)\subset X,$$ 
where $\Gamma_0$ is the biggest $G$-stable closed subset contained in the union of all the colors of $X$ and ${\Sing}(X)$ is the singular locus. Then $X(\ES') = X\backslash \Gamma'$ is smooth, quasi-toroidal and the set $\Vert(\ES)\coprod\Ray(\ES)$ identifies with the set $\Vert(\ES')\coprod\Ray(\ES')$.
Therefore, by Proposition \ref{dec}, we can assume without loss of generality that $X = G\times^{P}Y(\ES)$ is smooth and quasi-toroidal. 
Adapting the argument of the proof of \cite[Prop. 4.2 c)]{ST} to our setting, we obtain an isomorphism of $\mathcal{O}_{X}$-modules
$$\mathcal{O}_{X}(K_{X}) \simeq \mathcal{O}_{X}(D)\otimes q^{*}\mathcal{O}_{G/P}(K_{G/P}),$$
where $K_{G/P}$ is a canonical divisor on $G/P$ given by 
$$K_{G/P} = -\sum_{\alpha\in S\setminus I}a_{\alpha}D_{\alpha}$$ 
and the divisor $D$ is defined by
$$D = \sum_{(z,v)\in \Vert(\ES)}c_{(z,v)}D_{(z,v)} + \sum_{\rho\in\Ray(\ES)}c_{\rho}D_{\rho}$$
such that
$$K_{Y(\ES)} = \sum_{(z,v)\in \Vert(\ES)}c_{(z,v)}\Gamma_{(z,v)} + \sum_{\rho\in\Ray(\ES)}c_{\rho}\Gamma_{\rho}.$$
is a canonical divisor on $Y(\ES)$ (see the beginning of \S \ref{notY} for the notation). We conclude by \cite[Th. 3.21]{PS}.  
\end{proof}

\subsubsection{}
A normal $G$-variety $X$ is called \emph{$\QQ$-Gorenstein} if one (and thus any) canonical divisor $K_{X}$ is $\QQ$-Cartier.
The next corollary gives a combinatorial criterion for a normal horospherical $G$-variety of complexity one to be $\QQ$-Gorenstein; see \cite[Prop. 4.1]{Bri93} for the spherical case and \cite[Prop. 4.3]{LS13} for the case of normal $T$-varieties.

\begin{corollary} \label{Goren}
With the notation of \S\S \ref{RayVert} and \ref{Cartier}, the variety $X(\ES)$ is $\QQ$-Gorenstein if there exists $d\in\ZZ_{>0}$ and $\theta\in \PL(\ES)$ such that the following conditions are all satisfied.
\begin{enumerate}[(i)]
\item For every $\rho\in\Ray(\ES)$, we have $\theta(\cdot, \rho, 0) = -d$.
\item There exists a canonical divisor $K_{C} = \sum_{z\in C}b_z\cdot [z]$ on $C$ such that, for every $(z,v)\in\Vert(\ES)$, we have $\theta(z, \mu(v)v, 1) = d(\mu(v)b_z + \mu(v) - 1)$. 
\item For every $D_{\alpha}\in\F_{\ES}$, we have $\theta(\cdot,\varrho(D_{\alpha}),0) = -da_{\alpha}$. 
\end{enumerate}
\end{corollary}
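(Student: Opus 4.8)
The plan is to deduce the corollary directly from the explicit canonical divisor of Theorem~\ref{theocan} and from the description of $B$-stable Cartier divisors in Corollary~\ref{corcartier}. First observe that being $\QQ$-Gorenstein is invariant under linear equivalence: if $K_X \sim D'$ with $D'$ a $\QQ$-Cartier divisor, then $K_X - D' = \div(f)$ is principal, hence Cartier, so $K_X$ is $\QQ$-Cartier. Thus it suffices to produce $d \in \ZZ_{>0}$ for which $d\,K_X$ --- where $K_X$ now denotes the specific $B$-stable representative written down in Theorem~\ref{theocan} --- is a Cartier divisor.

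So I would argue as follows. Let $d \in \ZZ_{>0}$ and $\theta$ be as in the statement, and fix as $K_C = \sum_{z\in C} b_z \cdot [z]$ the canonical divisor on $C$ provided by condition~(ii); use this same $K_C$ in the formula of Theorem~\ref{theocan}. Complete $\theta$ to a colored integral piecewise linear function on $\ES$ by setting $r_\alpha := -d\,a_\alpha$ for every $\alpha \in S\setminus I$ with $D_\alpha \notin \F_{\ES}$; this is a valid integer sequence, and $\theta$ is integral linear on every $\mathscr{C}(\D^i)\cap\mathscr{C}(\D^j)$ by hypothesis, so $(\theta,(r_\alpha)) \in \PL(\ES)$. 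It then remains to check that the $B$-stable Cartier divisor $D_\theta$ attached to $(\theta,(r_\alpha))$ by Corollary~\ref{corcartier} coincides with $d\,K_X$, which is done by matching coefficients term by term: condition~(i) gives the coefficients along $\Ray(\ES)$; condition~(ii) gives the coefficients along $\Vert(\ES)$, using that the $G$-valuation $v_{D_{(z,v)}}$ is represented by $[(z,\mu(v)(v,1))]\in\E$ (as in the proof of Theorem~\ref{theodiv}); condition~(iii) gives the coefficients of the colors lying in $\F_{\ES}$; and the choice of $r_\alpha$ gives the coefficients of the colors $D_\alpha \notin \F_{\ES}$, whose coefficient in $K_X$ is $-a_\alpha$ by Theorem~\ref{theocan}. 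Hence $d\,K_X = D_\theta$ is Cartier, so $K_X$ is $\QQ$-Cartier and $X(\ES)$ is $\QQ$-Gorenstein.

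I do not anticipate a real difficulty: once Theorem~\ref{theocan} and Corollary~\ref{corcartier} are in hand, the statement is essentially a transcription through the exact sequence $0 \to \Prin(\ES) \to \PL(\ES) \to \Pic(X(\ES)) \to 0$. The only points needing mild care are bookkeeping ones --- using the same canonical divisor $K_C$ on both sides, being consistent about the normalization $\mu(v)(v,1)$ of the vertices when evaluating $\theta$, and observing that conditions~(i)--(iii) constrain only the function $\theta$ while the remaining values $r_\alpha$ are then forced to equal $-d\,a_\alpha$.
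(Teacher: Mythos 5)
Your proposal is correct and follows exactly the route the paper intends: the paper's own proof is the one-line remark that the corollary is a straightforward consequence of Corollary~\ref{corcartier} and Theorem~\ref{theocan}, and your coefficient-by-coefficient verification that $d\,K_X=D_\theta$ (after completing $\theta$ with $r_\alpha=-d\,a_\alpha$ for the colors outside $\F_{\ES}$) is precisely the computation being left to the reader. The preliminary remark that $\QQ$-Gorensteinness only depends on the linear equivalence class, and the care about evaluating $\theta$ at the primitive lattice point $\mu(v)(v,1)$, are the right points to make explicit.
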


\begin{proof}
It is a straightforward consequence of Corollary \ref{corcartier} and Theorem \ref{theocan}.
\end{proof}

\subsubsection{}
The remainder of this paper is dedicated to the study of log-terminal singularities of a simple $G$-model of $Z$.

Let $X$ be a $\QQ$-Gorenstein variety, let $\phi:X'\to X$ be a resolution of singularities, and let $d\in\ZZ_{>0}$ such that $dK_{X}$ is Cartier. Then the pull-back $\phi^{*}(dK_{X})$ is well-defined. The \emph{discrepancy} of $\phi$ is the $\QQ$-divisor
$$K_{X'} - \phi^{*}(K_{X}) := K_{X'} - \frac{1}{d}\phi^{*}(dK_{X}).$$
We say that $X$ has (purely) \emph{log-terminal} singularities if each coefficient of $K_{X'} - \phi^{*}(K_{X})$ is strictly bigger than $-1$. The property of having log-terminal singularities does not depend on the choice of the resolution of singularities $\phi$. More generally, if $D$ is a $\QQ$-divisor on $X$ such that $K_X+D$ is $\QQ$-Cartier, then we say that the pair $(X,D)$ is (purely) log-terminal if each coefficient of $K_{X'}-\phi^*(K_X+D)$ is strictly bigger than $-1$.

The next lemma will be useful to prove Theorem \ref{theolog}; see \cite[\S 3]{Kol97} for more details about log-terminal singularities and for a proof of the statement.

\begin{lemma} \label{lemmelog}
Let $\phi: X'\to X$ be a proper birational morphism between normal varieties. 
Let $D$ be a $\QQ$-divisor on $X$ such that $K_{X}+D$ is $\QQ$-Cartier, and let $D'$ be the $\QQ$-divisor on $X'$ defined by  
$$K_{X'}+D' = \phi^{*}(K_{X} + D).$$
Then $(X,D)$ is log-terminal if and only if $(X',D')$ is log-terminal and the coefficients of the prime divisors of $-D'$ (corresponding to exceptional divisors of $\phi$) are strictly bigger than $-1$.
\end{lemma}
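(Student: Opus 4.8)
The plan is to deduce Lemma \ref{lemmelog} from the standard behaviour of discrepancies under composition of resolutions, so the only real work is bookkeeping with the two ways of running a log resolution. First I would fix a common resolution: let $\psi: W \to X'$ be a resolution of singularities of $X'$ such that the composition $\phi \circ \psi: W \to X$ is also a resolution of $X$ (such $\psi$ exists by Hironaka, possibly after further blow-ups, and we may assume the relevant exceptional loci are simple normal crossing divisors). Since $K_X + D$ is $\QQ$-Cartier by hypothesis, and $K_{X'} + D' = \phi^*(K_X + D)$ by definition, $K_{X'} + D'$ is also $\QQ$-Cartier, so the pull-backs $\psi^*(K_{X'} + D')$ and $(\phi \circ \psi)^*(K_X + D)$ both make sense, and the projection formula / functoriality of pull-back gives
$$\psi^*(K_{X'} + D') = \psi^*\phi^*(K_X + D) = (\phi \circ \psi)^*(K_X + D).$$

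Next I would write, on $W$, the two defining relations for the discrepancy divisors: $K_W + D_W = \psi^*(K_{X'} + D')$ and $K_W + D_W' = (\phi\circ\psi)^*(K_X + D)$, where $D_W$ (resp. $D_W'$) is the $\QQ$-divisor whose components are the strict transforms of the components of $D'$ (resp. of $D$) together with the $\psi$-exceptional (resp. $(\phi\circ\psi)$-exceptional) prime divisors, with the appropriate coefficients. By the displayed equality above, $D_W = D_W'$; call this common divisor $\Delta$. By definition, $(X,D)$ is log-terminal if and only if every coefficient of $-\Delta$, viewed through the resolution $\phi\circ\psi$ of $X$, is $> -1$; and $(X',D')$ is log-terminal if and only if every coefficient of $-\Delta$, viewed through the resolution $\psi$ of $X'$, is $>-1$. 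The point is that these two conditions range over almost the same set of prime divisors on $W$: every prime divisor relevant for $(X',D')$ (i.e. either $\psi$-exceptional or a strict transform of a component of $D'$) is also relevant for $(X,D)$, since $\psi$-exceptional divisors are $(\phi\circ\psi)$-exceptional and components of $D'$ are either strict transforms of components of $D$ or are $\phi$-exceptional, hence $(\phi\circ\psi)$-exceptional.

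Therefore the log-terminality of $(X,D)$ imposes $>-1$ on strictly more coefficients of $-\Delta$ than the log-terminality of $(X',D')$ does — the extra divisors being exactly the $\phi$-exceptional prime divisors on $X'$ (equivalently, their strict transforms on $W$), which appear in $D'$ with coefficient $-a$, where $1 - a$ is the discrepancy of that divisor over $X$. Thus $(X,D)$ is log-terminal if and only if $(X',D')$ is log-terminal \emph{and} every $\phi$-exceptional prime divisor occurs in $D'$ with coefficient $> -1$, which is precisely the assertion of the lemma after noting that ``the coefficients of the prime divisors of $-D'$ corresponding to exceptional divisors of $\phi$ are $>-1$'' means exactly ``those prime divisors occur in $D'$ with coefficient $>-1$''. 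The main obstacle — really the only subtlety — is to make the identification $D_W = D_W'$ rigorous, i.e. to check carefully that the two discrepancy divisors computed on the common resolution $W$ genuinely agree as $\QQ$-divisors and that the indexing sets of prime divisors match up as claimed; once this is set up, the equivalence is immediate, and for the full argument one simply cites \cite[\S 3]{Kol97}, where this is the standard discussion of how discrepancies behave under a birational morphism.
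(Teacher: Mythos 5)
Your overall strategy is sound and is in fact the standard argument: the paper itself gives no proof of this lemma and simply refers to \cite[\S 3]{Kol97}, where exactly this common-resolution bookkeeping is carried out. Passing to a resolution $\psi\colon W\to X'$ such that $\phi\circ\psi$ is a resolution of $X$, using $\psi^*\phi^*=(\phi\circ\psi)^*$ to identify the two discrepancy divisors on $W$, and then observing that the prime divisors relevant for $(X,D)$ are those relevant for $(X',D')$ together with the strict transforms of the $\phi$-exceptional prime divisors of $X'$ --- whose discrepancies over $(X,D)$ are read off directly from $D'$ because $K_{X'}-\phi^*(K_X+D)=-D'$ --- is precisely how one proves the statement.

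That said, the final translation step of your write-up contains a sign error. Since $K_{X'}-\phi^*(K_X+D)=-D'$, the discrepancy of $(X,D)$ along a $\phi$-exceptional prime divisor $E\subset X'$ equals the coefficient of $E$ in $-D'$; the log-terminality condition at $E$ is therefore that this coefficient be $>-1$, i.e.\ that $E$ occur in $D'$ with coefficient $<1$ --- \emph{not} ``with coefficient $>-1$'', as you assert twice in your last paragraph. Likewise the parenthetical ``coefficient $-a$, where $1-a$ is the discrepancy'' is off: if $E$ has coefficient $-a$ in $D'$, then its discrepancy over $(X,D)$ is $a$ (its log discrepancy is $1+a$). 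These are bookkeeping slips rather than gaps in the method --- once the signs are corrected, the condition you derive coincides with the one in the statement of the lemma, and the identification $D_W=D_W'$ that you rightly single out as the only real subtlety is handled correctly.
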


\subsubsection{} 
In this subsection, we give an explicit method to construct a resolution of singularities of the $G$-model $X(\D)$ of $Z$.

Let $(\D,\F)$ be a proper colored $\sigma$-polyhedral divisor on a dense open subset $C_0\subset C$.
With the notation of \S \ref{subsec13}, we denote 
$$\tilde{Y}(\D) = {\Spec}_{\mathcal{O}_C}\left(\bigoplus_{m\in\sigma^{\vee}\cap M}\mathcal{O}_{C}(\D(m))\chi^m\right).$$
Then the natural morphism $\tilde{Y}(\D)\to Y(\D)$ induces a partial desingularization
$$G\times^{P}\tilde{Y}(\D)\rightarrow G\times^{P}Y(\D)=X(\D,\emptyset);$$
see \cite[Th. 3.1 (ii)]{AH} and \cite[Construction 2.8]{LS13b}. 
The $G$-variety $G\times^{P}\tilde{Y}(\D)$ identifies with $X(\ES_{{\tor}})$, where $\ES_{{\tor}}$ is the colored divisorial fan $\ES_{{\tor}} = \{(\D_{|C_i},\emptyset)\}_{i\in J};$
the sequence $(C_i)_{i\in J}$ forming a covering of $C_0$ by affine open subsets. 
Moreover, if we consider a divisorial fan $\overline{\Sigma}$ that refines $\Sigma_{\tor}$ and such that for all colored polyhedral divisor $\D \in \overline{\Sigma}$ and $z \in C$ with $\mathcal{C}(\D)_z \neq \emptyset$, the polyhedral cone $\mathcal{C}(\D)_z$ is regular (i.e., a cone generated by a subset of a basis of the lattice $N\times\ZZ$), then $X(\bar{\ES})\rightarrow X(\ES_{\tor})$ is a resolution of singularities. 
The next result is a straightforward consequence of this discussion.

\begin{proposition} \label{res}
The morphism
$$\phi:\; X(\bar{\ES})\rightarrow X(\ES_{\tor}) \rightarrow X(\D,\emptyset) \rightarrow X(\D,\F)$$
obtained by composing the decoloration morphism of \S \ref{subsec22} with the morphisms defined above is a resolution of singularities of $X(\D,\F)$. 
Moreover, with the notation of \S \ref{RayVert}, the exceptional divisors of $\phi$ correspond to the subsets $\Ray(\overline \Sigma) \backslash \Ray(\D)$ and $\Vert(\overline \Sigma) \backslash \Vert(\D)$.
\end{proposition}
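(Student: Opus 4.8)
The plan is to establish that $\phi$ is a resolution of singularities by checking separately that it is proper, birational, and has a smooth source, and then to read off the exceptional divisors from Theorem \ref{theodiv}. For properness and birationality I would treat the three morphisms composing $\phi$ one at a time. The decoloration morphism $\pi_{\dc}\colon X(\D,\emptyset)\to X(\D,\F)$ is proper and birational by Proposition \ref{dec}. The morphism $\tilde{Y}(\D)\to Y(\D)$ is a partial desingularization, hence proper and birational, by \cite[Th.~3.1]{AH}; this is inherited by the induced morphism $X(\ES_{\tor}) = G\times^{P}\tilde{Y}(\D)\to G\times^{P}Y(\D) = X(\D,\emptyset)$ since properness and birationality are preserved under the twisted-product construction $G\times^{P}(-)$ (the bundle being étale-locally a product with $G/P$). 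Finally, $X(\bar{\ES})\to X(\ES_{\tor})$ is proper and birational because $\bar{\ES}$ refines $\ES_{\tor}$ with the same support, so the induced map on $\E$ is the identity and one may apply the properness criterion of \cite[Th.~12.13]{Ti3}.

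For the smoothness of $X(\bar{\ES})$, note that as a refinement of the quasi-toroidal fan $\ES_{\tor}$ the fan $\bar{\ES}$ is itself quasi-toroidal (its color sets are empty), so by Proposition \ref{dec} the $G$-variety $X(\bar{\ES})$ is $G$-isomorphic to $G\times^{P}Y(\bar{\ES})$, a locally trivial fiber bundle over the smooth base $G/P$ with fiber the $K$-variety $Y(\bar{\ES})$. Since all the polyhedral cones of $\bar{\ES}$ are regular, $Y(\bar{\ES})$ is smooth by \cite[Th.~3.1]{AH} and \cite[Construction~2.8]{LS13b}; hence $X(\bar{\ES})$ is smooth and $\phi$ is a resolution of singularities of $X(\D,\F)$.

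It remains to identify the exceptional divisors. Since $G$ is connected, every irreducible component of the ($G$-stable) exceptional locus of $\phi$ is $G$-stable, so each exceptional prime divisor is a $G$-divisor of $X(\bar{\ES})$. By Theorem \ref{theodiv}, the $G$-divisors of $X(\bar{\ES})$ are parametrized bijectively, through their associated $G$-valuation in $\E$, by $\Vert(\bar{\ES})\coprod\Ray(\bar{\ES})$, and those of $X(\D,\F)$ by $\Vert(\D)\coprod\Ray(\D)$; moreover the strict transform of a $G$-divisor of $X(\D,\F)$ is again a $G$-divisor of $X(\bar{\ES})$, which gives the inclusions $\Vert(\D)\subset\Vert(\bar{\ES})$ and $\Ray(\D)\subset\Ray(\bar{\ES})$. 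As $\phi$ is birational and each of its three pieces induces the identity on $\E$, a $G$-divisor of $X(\bar{\ES})$ fails to be contracted by $\phi$ precisely when its $G$-valuation coincides with that of a $G$-divisor of $X(\D,\F)$, i.e.\ (by injectivity of the parametrization) precisely when its label belongs to $\Vert(\D)\coprod\Ray(\D)$. Hence the exceptional divisors of $\phi$ are exactly the $D_{(z,v)}$ with $(z,v)\in\Vert(\bar{\ES})\setminus\Vert(\D)$ and the $D_{\rho}$ with $\rho\in\Ray(\bar{\ES})\setminus\Ray(\D)$.

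The step I expect to be most delicate is this last one: the three morphisms contract different families of divisors — $\pi_{\dc}$ contracts the $D_{\rho}$ with $\varrho(\F)\cap\rho\neq\emptyset$ (as in Step~2 of the proof of Theorem \ref{theodiv}), the map $X(\ES_{\tor})\to X(\D,\emptyset)$ may contract the vertical rays meeting $\deg\D$ when $C_0=C$, and $X(\bar{\ES})\to X(\ES_{\tor})$ creates the subdivision rays and vertices — so one must check that the union of all these is captured cleanly by the single differences $\Vert(\bar{\ES})\setminus\Vert(\D)$ and $\Ray(\bar{\ES})\setminus\Ray(\D)$. The valuation-theoretic formulation above is meant to bypass this case analysis, but one still has to justify carefully that no exceptional divisor is non-$G$-stable and that Theorem \ref{theodiv} is applied to the correct models at each stage.
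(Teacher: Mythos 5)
Your proposal is correct and follows essentially the same route as the paper, which gives no separate proof and simply calls the proposition ``a straightforward consequence'' of the preceding discussion (the partial desingularization $\tilde{Y}(\D)\to Y(\D)$ from \cite{AH} and \cite{LS13b}, the identification with $X(\ES_{\tor})$, the toroidal refinement $\bar\ES$, and the decoloration morphism of Proposition \ref{dec}). Your write-up supplies exactly the details that discussion relies on, and in particular the valuation-theoretic identification of the exceptional divisors via the injective parametrization of Theorem \ref{theodiv} is the intended argument.
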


\subsubsection{}
The next statement gives a characterization of the normal horospherical $G$-varieties of complexity one having log-terminal singularities. 
See \cite[Th. 4.1]{Bri93} for the spherical case and \cite[Th. 4.9]{LS13} for the case of normal $T$-varieties.

\begin{theorem} \label{theolog}   
Let $(\D,\F)$ be a proper colored $\sigma$-polyhedral divisor on a dense open subset $C_0\subset C$.
We suppose that $X(\D)$ is $\QQ$-Gorenstein. Then $X(\D)$ has log-terminal singularities if and only if one of the following assertions holds.
\begin{enumerate}[(i)]
\item The curve $C_0$ is affine.
\item The curve $C_0$ is the projective line $\PP^{1}$ and $\sum_{z\in C_0}\left(1 - \frac{1}{\mu_z}\right)< 2$, where for every $z\in C_0$ we denote $\mu_z := \max\{\mu(v)\ |\ v\in\Delta_z(0)\}$ and $\mu(v):=\inf\{d\in \ZZ_{>0}\ |\ dv \in \NN \}$. 
\end{enumerate} 
\end{theorem}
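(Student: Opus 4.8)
The plan is to reduce the statement to the analogous result for normal $T$-varieties of complexity one (see \cite[Th.~4.9]{LS13}) via the parabolic induction and decoloration techniques developed earlier in the paper, and then to make the numerical criterion explicit. First I would use Theorem~\ref{theocan} to fix an explicit canonical divisor $K_{X(\D)}$, and Proposition~\ref{res} to fix an explicit resolution of singularities $\phi: X(\bar\ES) \to X(\D,\F)$ factoring through the decoloration morphism $\pi_{\dc}: X(\D,\emptyset) \to X(\D,\F)$ and the partial desingularization $X(\ES_{\tor}) = G \times^P \tilde Y(\D) \to G \times^P Y(\D) = X(\D,\emptyset)$. The key point is that log-terminality can be checked fibrewise along the locally trivial fibration $q: X(\bar\ES) \to G/P$, because $G/P$ is smooth: the discrepancy computation for $\phi$ reduces, via the isomorphism $\mathcal{O}_X(K_X) \simeq \mathcal{O}_X(D) \otimes q^*\mathcal{O}_{G/P}(K_{G/P})$ used in the proof of Theorem~\ref{theocan}, to the discrepancy computation for the induced morphism $\tilde Y(\bar\Sigma) \to Y(\D)$ of $K$-varieties, where the colors play no role. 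Here one should be careful that the passage through $\pi_{\dc}$ does not affect log-terminality: the exceptional divisors of $\pi_{\dc}$ are the colors, which carry coefficient $-a_\alpha$ with $a_\alpha \geq 2$ in $K_{X(\D,\emptyset)} - \pi_{\dc}^*(K_{X(\D,\F)})$ computed on the nose — wait, this needs care, since $a_\alpha \geq 2$ would seem to \emph{contradict} log-terminality. The correct statement is that $\pi_{\dc}$ is crepant on the relevant locus, or rather that the colors are not $\pi_{\dc}$-exceptional divisors of $X(\D,\emptyset)$ (they map onto colors of $X(\D,\F)$), so Lemma~\ref{lemmelog} applies with $D = 0$ and no contribution from the decoloration step; this is exactly the role of Proposition~\ref{dec}, which identifies $\pi_{\dc}^{-1}(\Gamma)$ with the decoloration of $\Gamma$ for each germ $\Gamma$, so that codimension-one germs go to codimension-one germs precisely when $\F_1 = \emptyset$ (Theorem~\ref{theodiv}, Step~2).

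The heart of the proof is then the $T$-variety (equivalently $K$-variety) case. Having reduced to showing that $X(\D, \emptyset) \cong G \times^P Y(\D)$ is log-terminal iff $Y(\D)$ is, and since $G \times^P Y(\D) \to G/P$ is a smooth fibration with fibre the affine $K$-variety $Y(\D) = \Spec A[C_0,\D]$, I would invoke \cite[Th.~4.9]{LS13}: the normal affine $T$-variety $\Spec A[C_0,\D]$ with $C_0$ a curve is log-terminal (when $\QQ$-Gorenstein) iff $C_0$ is affine, or $C_0 = \PP^1$ and $\sum_{z \in C_0}(1 - 1/\mu_z) < 2$, where $\mu_z = \max\{\mu(v) \mid v \in \Delta_z(0)\}$. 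The translation of the vertex-labelling from the $T$-variety combinatorics to ours is exactly the parametrization of $G$-divisors $D_{(z,v)}$ by $\Vert(\D)$ with multiplicity $\mu(v)$ established in Theorem~\ref{theodiv} (Step~1), together with the relation $[z] = \sum_{(z,v)} \mu(v) D_{(z,v)}$ from Corollary~\ref{clgroup}; this guarantees that the quantity $\mu_z$ appearing here matches the one in the cited $T$-variety result. The condition $\sum(1 - 1/\mu_z) < 2$ is the orbifold Euler characteristic condition: it expresses that the "log pair" on $\PP^1$ obtained by pushing the canonical divisor down along $Y(\D) \to C_0$ has log-terminal (i.e., "orbifold-spherical") singularities, and only finitely many $z$ contribute since $\mu_z = 1$ for all but finitely many $z$.

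Concretely, I would organize the argument as: (1) reduce to the quasi-toroidal smooth-fibration situation using Proposition~\ref{dec} and Lemma~\ref{lemmelog} applied along $\pi_{\dc}$ (observing that the colors are not exceptional for $\pi_{\dc}$, so this step contributes nothing to the discrepancy condition); (2) use the factorization $\mathcal{O}_X(K_X) \simeq \mathcal{O}_X(D) \otimes q^*\mathcal{O}_{G/P}(K_{G/P})$ to reduce the discrepancy computation along $\phi$ to that along the $K$-variety resolution $\tilde Y(\bar\Sigma) \to Y(\D)$, using smoothness of $G/P$ so that $q^* K_{G/P}$ contributes no exceptional discrepancy; (3) quote \cite[Th.~4.9]{LS13} for $Y(\D) = \Spec A[C_0,\D]$; (4) translate the numerical condition via Theorem~\ref{theodiv} and Corollary~\ref{clgroup}. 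The main obstacle I anticipate is step~(1): one must verify carefully that the decoloration morphism $\pi_{\dc}$ preserves log-terminality, i.e., that no color becomes $\pi_{\dc}$-exceptional and that the discrepancies along the other divisors are unchanged. This follows from Proposition~\ref{dec} — $\pi_{\dc}^{-1}(\Gamma)$ is the decoloration of $\Gamma$, so $\pi_{\dc}$ is an isomorphism in codimension one away from the $G$-stable locus of the colors, which itself has codimension $\geq 2$ in $X(\D,\F)$ — so the canonical divisor pulls back to the canonical divisor plus a combination of divisors of codimension $\geq 2$ images, contributing nothing. The secondary subtlety is matching the multiplicities $\mu(v)$ and the vertex sets exactly, which is handled by Theorems~\ref{theocan} and~\ref{theodiv}.
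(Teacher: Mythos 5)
Your overall strategy (resolve through the decoloration morphism and the toroidal partial desingularization, then reduce the discrepancy computation to the $T$-variety picture of \cite{LS13}) is the same as the paper's, but your step (1) contains a genuine gap, and it is precisely the point you hesitated over before resolving it the wrong way. You claim that $\pi_{\dc}\colon X(\D,\emptyset)\to X(\D,\F)$ ``is an isomorphism in codimension one away from the $G$-stable locus of the colors'' and hence contributes nothing to the discrepancy condition. This is false in general: by Step~2 of the proof of Theorem~\ref{theodiv} (which you cite), the $G$-stable divisor $D'_{\rho}\subset X(\D,\emptyset)$ attached to an extremal ray $\rho\subset\sigma$ is contracted by $\pi_{\dc}$ exactly when $\rho\cap\varrho(\F)\neq\emptyset$. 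Whenever some color image lies on an extremal ray of $\sigma$ (e.g.\ already for $\SL_2/U$ with $\sigma=\QQ_{\geq 0}$ and $\F=\F_0$), the decoloration morphism therefore has honest exceptional divisors, and their discrepancies must be shown to be $>-1$ before Lemma~\ref{lemmelog} lets you pass to $X(\D,\emptyset)$. It is exactly here that the coefficients $-a_{\alpha}$ of the colors in $K_X$ (Theorem~\ref{theocan}) enter: writing $\psi^*(dK_X)=\div(f\chi^m)$ on the locus where $dK_X$ is principal, one gets $\langle m,\varrho(D_{\alpha})\rangle=-da_{\alpha}$, hence for $\rho=\lambda\varrho(D_{\alpha})$ the discrepancy condition $\langle m,\rho\rangle=-\lambda d a_{\alpha}<0$ holds automatically since $a_{\alpha}\geq 2$. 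So your final conclusion is correct, but the verification you omitted is the only place where the horospherical structure (as opposed to the pure $T$-variety combinatorics) actually plays a role, and without it the reduction to \cite[Th.~4.9]{LS13} for $Y(\D)$ is not justified: log-terminality is not preserved under a divisorial contraction unless the contracted divisors have discrepancy $>-1$.

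A secondary imprecision: you propose to quote \cite[Th.~4.9]{LS13} for $Y(\D)$ with its own canonical divisor $K_{Y(\D)}$, whereas what must be controlled is the pullback of $K_{X(\D,\F)}$, whose $B$-eigenweight $m$ is pinned down by the color relations $\langle m,\varrho(D_\alpha)\rangle=-da_\alpha$ and is in general not the weight attached to $K_{Y(\D)}$. The paper avoids this mismatch by computing the discrepancy of the single composite $\psi\colon X(\ES_{\tor})\to X(\D,\F)$ at all rays $\rho\notin\Ray(\D)$ simultaneously, splitting into the case $\rho\cap\varrho(\F)\neq\emptyset$ (automatic, as above) and the case $\rho\cap\deg\D\neq\emptyset$, where matching the coefficients of $K_X$ and $\tfrac1d\div(f\chi^m)$ at each $D_{(z,v_z)}$ and summing $\deg(\div f)=0$ over the projective curve yields $\deg K_C+\sum_{z}(1-1/\mu(v_z))=\tfrac1d\langle m,v\rangle$, whence condition (ii). Your proof would be complete once you insert the explicit check for the $\pi_{\dc}$-exceptional rays and carry the weight $m$ of $K_{X(\D,\F)}$ (not of $K_{Y(\D)}$) through the computation.
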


\begin{proof}
If $C_0$ is affine, then by the proof of Theorem \ref{crit1}, we obtain that $X(\D)$ is covered by \'etale open subsets of horospherical embeddings.
Hence, by \cite[Th. 4.1]{Bri93}, $X(\D)$ has log terminal singularities. Therefore, we can assume that $C_0=C$ is projective. 
Let us consider a canonical divisor $K_X$ as in Theorem \ref{theocan}, let $d \in \ZZ_{>0}$ be such that $d K_X$ is a Cartier divisor, and let $\theta\in\PL(\D)$ such that $dK_{X} = D_{\theta}$.
By Corollary \ref{corcartier}, we know that the restriction of $d K_X$ on the open subset  
$$X_{1} := X(\D)\backslash \bigcup_{D_{\alpha}\not\in\F_{\ES}}D_{\alpha}$$
is a principal divisor. Moreover, since every color $D_\alpha$ satisfying $D_\alpha \notin \F_{\ES}$ does not contain a $G$-orbit of $X(\D)$, the open subset $\psi^{-1}(X_{1})$ intersects each exceptional divisor of the partial desingularization $\psi: X(\ES_{\tor}) \rightarrow X(\D)$ given by Proposition \ref{res}. Therefore we can replace $X$ by $X_1$ and suppose that $d K_X$ is principal. It follows that $\psi^*(dK_{X})$ is the principal divisor of a homogeneous element $f\chi^m\in A_M$ of degree $m$ considered as a rational function of $X(\ES_{{\tor}})$.  Hence, we have the equality
$$-D':= K_{X(\ES_{{\tor}})} - \psi^* K_{X(\D)} = \sum_{\rho\not\in\Ray(\D)}(-1-\langle m,\rho\rangle )D_{\rho}.$$ 
By Lemma \ref{lemmelog}, $X(\D)$ has log-terminal singularities if and only if $-D'$ has its coefficients strictly bigger than $-1$ and $(X(\ES_{{\tor}}), D')$ has log-terminal singularities. Thus, by the same argument as in \cite[\S 4]{LS13} (see the sketch of proof before \cite[Th. 4.9]{LS13}), we know that $X(\D)$ has log-terminal singularities if and only if $\langle m, \rho\rangle < 0$, for every $\rho\not\in\Ray(\D)$. 

Let $\rho\subset\sigma$ be an extremal ray such that $\rho\not\in\Ray(\D)$, then $\rho \cap \varrho(\F) \neq \emptyset$ or $\rho \cap \deg \D \neq \emptyset$. Let us suppose that $\rho\cap \varrho(\F)\neq \emptyset$. Then there exists 
$D_{\alpha}\in\F$ and $\lambda\in\QQ_{>0}$ such that $\rho = \lambda\varrho(D_{\alpha})$. Hence
$$\langle m,\rho\rangle  = \lambda \langle m,\varrho(D_{\alpha})\rangle = -\lambda d a_{\alpha} < 0.$$
Let us now suppose that $\rho \cap \deg \D \neq \emptyset$. Then $\rho = \lambda v$ for a vertex $v\in{\deg}\D$ and for some $\lambda\in\QQ_{>0}$. 
Let $(v_z)_{z\in C}$ be a sequence of elements of $\Delta_z(0)$ such that $v = \sum_{z\in C}v_z$. 
As the coefficients of $K_X$ and $\frac{1}{d} \div f \chi^m$ at the prime divisor $D_{(z,v_z)}$ corresponding to any $(z,v_z) \in \Vert(\D) $ are the same, we have equalities     
$$\mu(v_z)b_z+\mu(v_z)-1=\frac{1}{d} \mu(v_z) (\left\langle m, v_z \right\rangle+{\ord}_z f),$$
where $K_C=\sum_{z \in C} b_z \cdot [z]$ is a canonical divisor on $C$. Since $C$ is projective, we have $\deg (\div f)=0$. Hence, summing over $C$ on both sides gives the equality
$${\deg} K_{C} + \sum_{z\in C}\left(1 - \frac{1}{\mu(v_z)}\right)=\frac{1}{d}\langle m, v\rangle.$$
As $\deg \D \subset \sigma$, we conclude that $X(\D)$ has log-terminal singularities if and only if the condition $(ii)$ is satisfied.

\end{proof}

\end{document}